\def\@settitle{\begin{center}%
  \baselineskip14\p@\relax
  \bfseries
  \uppercasenonmath\@title
  \@title
  \ifx\@subtitle\@empty\else
     \\[1ex]\uppercasenonmath\@subtitle
     \footnotesize\mdseries\@subtitle
  \fi
  \end{center}%
}
\def\subtitle#1{\gdef\@subtitle{#1}}
\def\@subtitle{}
\numberwithin{equation}{section}
\newtheorem{theorem}{Theorem}[section]
\newtheorem{lemma}[theorem]{Lemma}
\newtheorem{proposition}[theorem]{Proposition}
\newtheorem{corollary}[theorem]{Corollary}
\theoremstyle{definition}
\newtheorem{remark}[theorem]{Remark}
\newenvironment{assumption}[1]
  {\innercustomthm}
  {\endinnercustomthm}
\def\E{{\mathbb E}}
\def\EE{{\mathbb E}}
\def\R{{\mathbb R}}
\def\RR{{\mathbb R}}
\def\N{{\mathbb N}}
\def\FF{{\mathbb F}}
\def\PP{{\mathbb P}}
\def\P{{\mathcal P}}
\def\cP{{\mathcal P}}
\def\cH{{\mathcal H}}
\def\X{{\mathcal X}}
\def\Y{{\mathcal Y}}
\def\Z{{\mathcal Z}}
\def\L{{\mathcal L}}
\def\cL{{\mathcal L}}
\def\cG{{\mathcal G}}
\def\W{{\mathcal W}}
\def\cW{{\mathcal W}}
\def\A{{\mathcal A}}
\def\F{{\mathcal F}}
\def\bU{V}
\def\C{{\mathcal C}}
\definecolor{darkspringgreen}{rgb}{0.09, 0.45, 0.27}
\def\newD{{\lambda_d}}
\def\newDD{{\lambda}}
\newcommand{\bb}{\bar{b}}
\newcommand{\cc}{\bar{g}}
\def\newexp{{q}}
\title[A central limit theorem for mean field games]{From the master equation to mean field game limit theory: A central limit theorem}
\author{Fran\c{c}ois Delarue, Daniel Lacker, and Kavita Ramanan}
\begin{document}

\begin{abstract}
Mean field games (MFGs) describe the limit, as $n$ tends to infinity, of stochastic differential games with $n$ players interacting with one another through their common empirical distribution. Under suitable smoothness assumptions that guarantee uniqueness of the MFG equilibrium, a form of law of large of numbers (LLN), also known as propagation of chaos, has been established to show that the MFG equilibrium arises as the limit of the sequence of empirical measures of the $n$-player game Nash equilibria, including the case when player dynamics are driven by both idiosyncratic and common sources of noise. The proof of convergence relies on the so-called master equation for the value function of the MFG, a partial differential equation on the space of probability measures. In this work, under additional assumptions, we establish a functional central limit theorem (CLT) that characterizes the limiting fluctuations around the LLN limit as the unique solution of a linear stochastic PDE. The key idea is to use the solution to the master equation to construct an associated McKean-Vlasov interacting $n$-particle system that is sufficiently close to the Nash equilibrium dynamics of the $n$-player game for large $n$. We then derive the CLT for the latter from the CLT for the former. Along the way, we obtain a new multidimensional CLT for McKean-Vlasov systems. We also illustrate the broader applicability of our methodology by applying it to establish a CLT for a specific linear-quadratic example that does not satisfy our main assumptions, and we explicitly solve the resulting stochastic PDE in this case. 
\end{abstract}

\maketitle

\noindent 
    {\bf Key Words.}  Mean field games, master equation, McKean-Vlasov, interacting particle systems, common noise, fluctuations, central limit theorem, systemic risk.

\tableofcontents

\section{Introduction}

\textit{Finite games and mean field games.} Mean field games (MFG), introduced independently in \cite{lasrylions,lasry2006jeux1,lasry2006jeux2} and  \cite{huang2006large,huang2007large},
 are models of competition among a continuum of symmetric agents, each of whom dynamically controls a state variable; see the forthcoming books
 \cite{CarmonaDelarue_book_I,CarmonaDelarue_book_II} for an overview.  Equilibria of MFGs  were introduced as potentially more tractable approximations of Nash equilibria of  large finite systems of agents
 {interacting with one
  another through their common empirical distribution.}  
While much of the theoretical work of the past decade has focused on questions of existence and uniqueness of MFG equilibria, the probabilistic limit theory is less well understood.  
A law of large numbers has only recently come into focus \cite{cardaliaguet-delarue-lasry-lions,lacker2016general,fischer2017connection}, clarifying how a MFG arises as the limit of a suitable sequence of $n$-player games as $n\rightarrow\infty$, {in the presence of both
  idiosyncratic and common sources of noise.}  
In particular, given for each $n$ a Nash equilibrium for the $n$-player game, the limit of the sequence of
empirical distributions of state variables can be effectively characterized 
in terms of the equilibria of the MFG.  The goal of this paper is to complement this law of large numbers with a central limit theorem (CLT). Using similar techniques, a large deviation principle and non-asymptotic concentration 
bounds are obtained in a companion paper \cite{dellacram18b}. 

The main tool in our analysis is the \emph{master equation}, an infinite-dimensional partial differential equation (PDE), which plays a similar role to the Hamilton-Jacobi-Bellman (HJB) equation  in classical stochastic control theory. Lions \cite{lions-collegedufrance} demonstrated how the master equation can be used to construct an equilibrium for the MFG, and this construction was developed further in \cite{bensoussan2015master,bensoussan2017interpretation,carmona2014master}.
To solve the master equation is no simple matter, though some preliminary well-posedness results may be found in \cite{gangbo2015existence,chassagneux2014probabilistic} in the case without common noise and \cite{cardaliaguet-delarue-lasry-lions,CarmonaDelarue_book_II} in the case with common noise.

A major breakthrough came in \cite{cardaliaguet-delarue-lasry-lions} with the discovery  that, for a class of MFG with a unique equilibrium, when the associated master equation has a smooth enough solution, it can be used to prove  convergence of $n$-player Nash equilibria to the unique MFG equilibrium.  The essential idea is that the solution to the  master equation can be used to build a system of $n$ interacting diffusions of McKean-Vlasov type
(see \cite{McKean1907,sznitman1991topics}
for standard references on McKean-Vlasov equations)
 that  is  quantitatively ``close" to the true $n$-player Nash equilibrium system and 
 whose empirical measure converges to the MFG equilibrium. This immediately implies that the empirical measure of the $n$-player Nash equilibrium system converges to the MFG equilibrium.  In this paper we refine the ``closeness'' estimates  to show that the distance between the McKean-Vlasov and Nash systems decays rapidly enough that the fluctuations are the same.

The results of this paper and the companion \cite{dellacram18b} mark the first probabilistic limit theorems for MFGs beyond the law of large numbers, at least for diffusion-based models. The very recent simultaneous works \cite{cecchin2017probabilistic,cecchin2017convergence,bayraktar2017analysis} carry out a similar program for MFGs with finite state space (and without common noise), using the (finite-dimensional) master equation to connect the $n$-player equilibrium to a more classical interacting particle system, and then transferring limit theorems (a law of large numbers, CLT, and LDP) from the latter to the former. Loosely related ideas appeared also in \cite{ahuja2017asymptotic}, which uses FBSDE methods to study the convergence of MFGs as the common noise parameter vanishes.
\vspace{4pt}

\textit{Main results.} Let us explain the idea and main results more clearly. We consider $n$-player stochastic differential games, in which agent $i$ chooses a Markovian control $\alpha^i=\alpha^i(t,\bm{X}_t)$ lying in an action space $A$ to influence an $(\R^d)^n$-valued state process $\bm{X}=(X^1,\ldots,X^n)$ given by
\begin{align}
dX^i_t &= b(X^i_t,m^n_{\bm{X}_t},\alpha^i(t,\bm{X}_t))dt + \sigma dB^i_t + \sigma_0 dW_t. \label{intro:dynamics}
\end{align}
Throughout, $W$ and $B^1,\ldots,B^n$ are independent Wiener processes, and we write 
\[
m^n_{\bm{x}} = \frac{1}{n}\sum_{k=1}^n\delta_{x_k}
\]
to denote the empirical measure of a vector $\bm{x}=(x_1,\ldots,x_n)$ in $(\R^d)^n$.  Agent $i$ seeks to minimize the cost functional
\begin{align*}
\E\left[\int_0^Tf(X^i_t,m^n_{\bm{X}_t},\alpha^i(t,\bm{X}_t))dt + g(X^i_T,m^n_{\bm{X}_T})\right],
\end{align*}
where the time horizon $T \in (0,\infty)$ is fixed.
Define the Hamiltonian
\begin{align*}
H(x,m,y) = \inf_{a \in A}\bigl[b(x,m,a) \cdot y + f(x,m,a)\bigr],
\end{align*}
and let $\widehat{\alpha}(x,m,y)$ denote a minimizer, which we will always assume to exist.
Nash equilibria of this game (in closed-loop strategies), defined precisely in Section \ref{se:Nashsystems}, can be studied in terms of solutions $(v^{n,i})_{i=1}^n$ of the PDE system 
\begin{align}
\nonumber
\partial_tv^{n,i}(t,\bm{x}) &+ H\left(x_i,m^n_{\bm{x}},D_{x_i}v^{n,i}(t,\bm{x})\right) + \sum_{j \neq i}D_{x_j}v^{n,i}(t,\bm{x}) \cdot b\Bigl(x_j,m^n_{\bm{x}},\widehat{\alpha}\bigl(x_j,m^n_{\bm{x}},D_{x_j}v^{n,j}(t,\bm{x})\bigr)\Bigr) \\
&+ \frac{1}{2}\sum_{j = 1}^n\mathrm{Tr}\left[D^2_{x_j,x_j}v^{n,i}(t,\bm{x})\sigma \sigma^\top \right] + \frac{1}{2}\sum_{j,k=1}^n\mathrm{Tr}\left[D^2_{x_j,x_k}v^{n,i}(t,\bm{x})\sigma_0 \sigma_0^\top \right] = 0,
\label{intro:Nash:system}
\end{align}
for $(t,\bm{x}) \in [0,T] \times (\R^d)^n$, with terminal condition $v^{n,i}(T,\bm{x}) = g(x_i,m^n_{\bm{x}})$. 
The quantity  $v^{n,i}$ describes the value function of the $i$th player in the $n$-player game.
We assume this PDE system has a smooth solution, in which case a (closed-loop) Nash equilibrium is given by
\[
\alpha^{n,i}(t,\bm{X}_t) = \widehat{\alpha}(X^i_t,m^n_{\bm{X}_t},D_{x_i}v^{n,i}(t,\bm{X}_t)).
\]
Substituting these controls into the dynamics \eqref{intro:dynamics}, we identify the Nash equilibrium empirical measure  $(m^n_{\bm{X}_t})_{t \in [0,T]}$. 
The value function of a typical player in the associated MFG is described by the so-called 
master equation, whose definition we postpone to Section \ref{se:Nashsystems}.
Assuming, among other things, the existence of a sufficiently smooth solution to the master equation, we show the following:  
\begin{enumerate}
\item As $n\rightarrow\infty$, $(m^n_{\bm{X}_t})_{t \in [0,T]}$ converges in law to the unique solution $(\mu_t)_{t \in [0,T]}$ to the MFG, which is itself a stochastic flow of probability measures. This was already shown in \cite[Theorem 2.15]{cardaliaguet-delarue-lasry-lions}  when the state space is the torus. Our extension to $\R^d$ is straightforward provided the solution to the master equation 
together with its derivatives have an appropriate rate of growth and the system 
\eqref{intro:Nash:system} has a solution. Similar questions are addressed in the forthcoming reference 
\cite[Chapter 13, Section 6.3]{CarmonaDelarue_book_II}.
\item Our main contribution is to show that the fluctuation process $(\sqrt{n}(m^n_{\bm{X}_t} - \mu_t))_{t \in [0,T]}$ converges in law (in a suitable distribution space) to the unique solution to a certain linear stochastic PDE driven by a space-time Gaussian noise, the coefficients of which depend on the solution to the master equation and on $(\mu_t)_{t \in [0,T]}$.
\end{enumerate}
\vspace{2pt}
These results are presented precisely in Section \ref{se:statements}.

\textit{Strategy of proof.} The idea driving all of the results, {both in this work and the companion paper
\cite{dellacram18b}}, is to use the solution to the master equation, $U=U(t,x,m)$, to construct an interacting diffusion system $\bm{\overline{X}}=(\overline{X}^1,\ldots,\overline{X}^n)$ of McKean-Vlasov type:
\begin{align}
d \overline{X}^i_t = b\Bigl(\overline{X}^i_t,m^n_{\bm{\overline{X}}_t},\widehat{\alpha}\bigl(\overline{X}^i_t,m^n_{\bm{\overline{X}}_t},D_xU(t,\overline{X}^i_t,m^n_{\bm{\overline{X}}_t}) \bigr)\Bigr)dt + \sigma dB^i_t + \sigma_0 dW_t, \label{intro:particles}
\end{align}
starting from $\overline{X}^i_0=X^i_0$, and driven by the same Wiener processes. 
By showing that the functions 
$(u^{n,i}(t,\bm{x}) := U(t,x_i,m^n_{\bm{x}}))_{i=1}^n$ \emph{nearly} solve the $n$-player PDE system written for $(v^{n,i})_{i=1}^n$ above (see the crucial Proposition \ref{pr:master-nearly-nash}), we can find good estimates between $D_{x_i}v^{n,i}$ and $D_{x_i}u^{n,i}$ (see Theorem \ref{th:mainestimate}), which enable an estimate of the distance between the empirical measures $m^n_{\bm{X}}$ and $m^n_{\bm{\overline{X}}}$.  
This was indeed the strategy of \cite{cardaliaguet-delarue-lasry-lions}, where it is shown that 
\begin{align}
\E\left[\W_{1,\C^d}(m^n_{\bm{X}},m^n_{\bm{\overline{X}}})\right] = O(n^{-2}), \label{intro:O(1/n^2)}
\end{align}
where  $\W_{p,{\mathcal C}^d}$ denotes the $p$-Wasserstein distance on the space of probability
measures on the path space ${\mathcal C}^d:=C([0,T];\R^d)$ with finite $p^{\textrm{\rm th}}$ moment.
Equivalently, \eqref{intro:particles} must be regarded as an approximating $n$-particle system of the 
$n$-player Nash equilibrium.
From McKean-Vlasov theory (see, e.g., \cite{gartner1988mckean,oelschlager1984martingale,sznitman1991topics}{, or \cite{carmona2016probabilistic,coghi2016,Dawson1995}
and \cite[Chapter 2, Section 2.1]{CarmonaDelarue_book_II}
 for the 
common noise case}), we conclude that both empirical measure sequences converge in law to the unique solution $(\mu_t)_{t \in [0,T]}$ of the (conditional) McKean-Vlasov SDE
\begin{align*}
dX_t &= b\Bigl(X_t,\mu_t,\widehat{\alpha}\bigl(X_t,\mu_t,D_xU(t,X_t,\mu_t)\bigr)\Bigr)dt + \sigma dB_t + \sigma_0 dW_t, \\ 
\mu_t &= \text{Law}(X_t \, | \, (W_s)_{s \le t}).
\end{align*}
Moreover, it is known from early work on the master equation that this limit $(\mu_t)_{t \in [0,T]}$ is the unique equilibrium of the MFG, which is to say that 
$(X_{t})_{t \in [0,T]}$ solves the following optimization problem in the environment $(\mu_{t})_{t \in [0,T]}$:
\begin{align*}
\begin{cases}
\displaystyle \sup_\alpha \E\left[\int_0^Tf(X_t,\mu_t,\alpha_t)dt + g(X_T,\mu_T)\right], 
\vspace{5pt}
\\
\displaystyle \text{s.t. } dX_t = b(X_t,\mu_t,\alpha_t)dt + \sigma dB_t + \sigma_0 dW_t,
\end{cases}
\end{align*}
the supremum being taken over $A$-valued processes that are progressively measurable with respect to the filtration generated by $X_{0}$, $B$ and $W$. 
This leads to the convergence claimed in point (1) of the program detailed above.

Next, to prove the CLT described in point (2) above, we study first the McKean-Vlasov fluctuations $(\sqrt{n}(m^n_{\bm{\overline{X}}_t} - \mu_t))_{t \in [0,T]}$. The early results of \cite{tanaka1981central,sznitman1985fluctuation,meleard} only cover affine dependence on the mean field term, but requiring the interaction in \eqref{intro:particles} to be affine is far too restrictive, except perhaps in linear-quadratic models.  On the other hand, the later work \cite{kurtz-xiong} treats more general  mean field interactions to include the non-affine yet smooth interactions that we obtain from the solution to the master equation.  However, the state space in \cite{kurtz-xiong}  is only one-dimensional, and so 
we extend their result to our multi-dimensional setting; further, in order to track the influence of the dimension explicitly, we follow some of the arguments introduced in \cite{meleard}. While $O(n^{-1})$ would not suffice, the estimate \eqref{intro:O(1/n^2)} is sharp enough to show that the fluctuations of the McKean-Vlasov and Nash systems are then the same.

Under additional assumptions,  the estimate \eqref{intro:O(1/n^2)}  is further refined in the companion paper \cite{dellacram18b} to establish exponential closeness of the McKean-Vlasov and Nash systems, which is then used to obtain large deviations results and non-asymptotic concentration bounds.

\textit{Required assumptions and examples.} 
The above results are  proved under admittedly very strong hypotheses. The main {conditions}, Assumptions \ref{assumption:A}, \ref{assumption:B}, and \ref{assumption:B'}, are spelled out in Section \ref{se:assumptions} below, and {the CLT} requires some additional hypotheses ({see Assumption \ref{assumption:C}}). Most notably, we assume throughout that $U$ admits bounded derivatives up to order two 
in the measure argument.  For the  CLT, additional smoothness 
is required of the first and second order derivatives with respect to $m$, the precise form of which depends on the dimension $d$ of the state space. In addition, we require either that the Hamiltonian $H$ is Lipschitz in $y$ or that $\{v^{n,i} : n \in \N, \ i=1,\ldots,n\}$ are uniformly bounded. Although this rules out many natural examples, such as linear-quadratic models (see, however, the discussion of the next paragraph), the main well-posedness results of \cite{cardaliaguet-delarue-lasry-lions} ensure that our assumptions cover a broad class of coefficients.
More importantly, we avoid as much as possible imposing specific assumptions on the data $(b,f,g)$; for instance, in contrast with 
\cite{cardaliaguet-delarue-lasry-lions}, we do not impose any explicit monotonicity condition on $f$ and $g$.  Instead, we aim as much as possible to create a ``black box'' in the sense that our results should read as follows: If there exists a sufficiently regular solution to the master equation, then the results (1) and (2) described above are valid. We hope that our work  will apply not only to the setting of \cite{cardaliaguet-delarue-lasry-lions} but also to those illuminated by future research on the well-posedness of the master equation.

Despite the restrictive assumptions required for our general results, the same ideas can be successfully adapted to specific examples which fall outside the scope of our main theorems. 
In Section \ref{se:examples} we discuss the linear-quadratic model of \cite{carmona-fouque-sun}, which does not fit the assumptions 
of our main theorems but for which an explicit solution is known for both the $n$-player and MFG equilibria. 
These explicit solutions allow us to bypass the difficult estimates required in the general setting, and we again connect the $n$-player equilibrium and a corresponding McKean-Vlasov system. As before, this connection lets us transfer {the CLT}
for the latter system to the former. 
In other words, while a single unifying theorem seems out of reach for technical reasons, the strategy of our arguments seems much more broadly applicable.
In addition, we compute the explicit solution for the limiting SPDE in this specific model.

\textit{Organization of the paper.}
The paper is organized as follows. In Section \ref{se:Nash-Master} we carefully define the $n$-player game and the Nash system, the MFG and the the master equation, and the main sets of assumptions. 
Section \ref{se:statements} then states precisely the main results, with the LLN in Section \ref{se:LLN-statements} and the
CLT in Section \ref{se:CLT-statements}.  The key estimates relating the Nash system and master equation are developed in Section \ref{se:mainestimates}, to prepare for the proofs of the LLN in Section \ref{se:LLN} and the CLT in Section \ref{se:CLT-proofs}.
Section \ref{se:examples} treats the specific  model mentioned in the previous paragraph.
Relevant properties of derivatives of functionals on the Wasserstein space of 
probability measures are collected in 
Appendix \ref{ap:derivatives}.

\section{Nash systems and Master equations} \label{se:Nash-Master}

\subsection{Notation and model inputs}
\label{subs-notation}

For a normed space $(E,\|\cdot\|)$, let $\P(E)$ denote the set of Borel probability measures on $E$.
Throughout the paper we make use of the standard notation $\langle \mu,\varphi\rangle := \int_E\varphi\,d\mu$ for integrable functions $\varphi$ on $E$ and measures $\mu$ on $E$.  
 Given $p \in [1,\infty)$, we write $\P^p(E,\|\cdot\|)$, or simply $\P^p(E)$ if the norm is understood, for the set of $\mu \in \P(E)$ satisfying $\langle \mu, \|\cdot\|^p\rangle < \infty$. 
For a separable Banach space $(E,\|\cdot\|)$, we always endow $\P^p(E,\|\cdot\|)$ with the $p$-Wasserstein metric $\W_{p,E}$ defined by  
 \begin{align}
\label{Wassp}
\W_{p,E}(\mu,\nu) := \inf_\pi \left(\int_{E\times E}\|x-y\|^p\pi(dx,dy)\right)^{1/p},
\end{align}
where the infimum is over all probability measures $\pi$ on $E \times E$ with marginals $\mu$ and $\nu$. When the space $E$ is understood, we may omit it from the subscript in $\W_{p,E}$ by writing simply $\W_p$.

For a positive integer $k$, we always equip $\R^k$ with the Euclidean norm, denoted $|\cdot|$, unless stated otherwise. For fixed $T \in (0, \infty)$, we will make use of the path spaces
\[
\C^k := C([0,T];\R^k), 
\]
which are always endowed with the supremum norm $\|x\|_\infty = \sup_{t \in [0,T]}|x_t|$.
For $m \in \P(\C^k)$ and $t \in [0,T]$, we write $m_t$ for the time-$t$ marginal of $m$, i.e., the image of $m$ under  the map $\C^k \ni x \mapsto x_t \in \R^k$.

For a set $E$ and $n \in \N$, we often use boldface $\bm{x}=(x_1,\ldots,x_n)$ for an element of $E^n$, and we write
\[
m^n_{\bm{x}} := \frac{1}{n}\sum_{i=1}^n\delta_{x_i}
\]
for the associated empirical measure, which lies in $\P(E)$.

\subsection{Derivatives on Wasserstein space}
\label{subse:derivatives:m:P2}

To define the master equation, we must first introduce a suitable derivative for functions of probability measures. As we will be applying but not solving the master equation, we need very few properties of these derivatives, all of which are outlined here. Following \cite{cardaliaguet-delarue-lasry-lions}, we fix an exponent $\newexp \in [1,\infty)$, and we say that a function $\bU : \P^\newexp(\R^d) \rightarrow \R$ is ${{\mathscr C}^1}$ if there exists a continuous map $\frac{\delta \bU}{\delta m} : \P^\newexp(\R^d) \times \R^d \rightarrow \R$ satisfying
\begin{enumerate}[(i)]
\item For every $\W_{\newexp}$-compact set $K \subset \P^\newexp(\R^d)$, there exists $c <  \infty$ such that $\sup_{m \in K}|\frac{\delta \bU}{\delta m}(m,v)| \le c(1+|v|^\newexp)$ for all $v \in \R^d$.
\item For every $m,m' \in \P^\newexp(\R^d)$,
\begin{align}
\bU(m')-\bU(m) = \int_0^1\int_{\R^d}\frac{\delta \bU}{\delta m}((1-t)m + tm',v)\,(m'-m)(dv)\,dt. \label{def:measure-derivative}
\end{align}
\end{enumerate}
Note that the condition (i) is designed to make the integral in (ii) well-defined. Only one function $\frac{\delta \bU}{\delta m}$ can satisfy \eqref{def:measure-derivative}, up to a constant shift; that is, if $\frac{\delta \bU}{\delta m}$ satisfies \eqref{def:measure-derivative} then so does $\frac{\delta \bU}{\delta m} + c$ for any $c \in \R$. For concreteness we always choose the shift to ensure
\begin{align}
\int_{\R^d}\frac{\delta \bU}{\delta m}(m,v)\, m(dv)=0. \label{def:derivative-shift}
\end{align}

If $\frac{\delta \bU}{\delta m}(m,v)$ is continuously differentiable in $v$, we define its \emph{intrinsic derivative} $D_m\bU : \P^{\newexp}(\R^d) \times \R^d \rightarrow \R^d$ by
\begin{align}
\label{intrinsic}
D_m\bU(m,v) = D_v\left(\frac{\delta \bU}{\delta m}(m,v)\right),
\end{align} 
where we use the notation $D_v$ for the gradient in $v$. 
If, for each $v \in \R^d$, the map $m \mapsto \frac{\delta \bU}{\delta m}(m,v)$ is ${{\mathscr C}^1}$, then we say that $\bU$ is ${{\mathscr C}^2}$ and let $\frac{\delta^2 \bU}{\delta m^2}$ denote its derivative, or more explicitly, 
\begin{align*}
\frac{\delta^2\bU}{\delta m^2}(m,v,v') = \frac{\delta}{\delta m}\left(\frac{\delta \bU}{\delta m}(\cdot,v)\right)(m,v').
\end{align*}
We will also make some use of the derivative
\[
D_v D_m\bU(m,v) = D_v[D_m\bU(m,v)],
\]
when it exists, and we note that $D_vD_m\bU$ takes values in $\R^{d\times d}$; 
for some results, we will also consider higher order derivatives $D^k_{v} D_{m} \bU(m,v)$ with values in $\R^{d \times \ldots \times d}
\cong
\R^{d^{k+1}}$ for $k \in \N$.
Finally, if $\bU$ is ${{\mathscr C}^2}$ and if $\frac{\delta^2\bU}{\delta m^2}(m,v,v')$ is twice continuously differentiable in $(v,v')$, we let
\[
D_m^2\bU(m,v,v') = D^2_{v,v'}\frac{\delta^2\bU}{\delta m^2}(m,v,v')
\]
denote the $d \times d$ matrix of partial derivatives $(\partial_{v_i}\partial_{v'_j}
[\delta^2 \bU/\delta m^2](m,v,v'))_{i,j}$. Equivalently (see \cite[Lemma 2.4]{cardaliaguet-delarue-lasry-lions}),
\[
D_m^2\bU(m,v,v') = D_m(D_m\bU(\cdot,v))(m,v').
\]
Aside from these definitions, the only result we should state here is an important observation on how these derivatives interact with empirical measures, essentially known from \cite{cardaliaguet-delarue-lasry-lions}.
For completeness, the proof is given in Appendix \ref{ap:derivatives}.

\begin{proposition} \label{pr:empiricalmeasure}
Given $\bU : \P^{\newexp}(\R^d) \rightarrow \R$, define $u_n : (\R^d)^n \rightarrow \R$ by $u_n(\bm{x}) = \bU(m^n_{\bm{x}})$ for some fixed $n \ge 1$.
\begin{enumerate}[(i)]
\item If $\bU$ is ${{\mathscr C}^1}$ and if $D_m\bU$ exists and is bounded and jointly continuous, then $u_n$  is continuously differentiable, and
\begin{align}
D_{x_j}u_n(\bm{x}) = \frac{1}{n}D_m\bU(m^n_{\bm{x}},x_j), \text{ for } j=1,\ldots,n. \label{def:derivative-empirical-measure}
\end{align}
\item If $\bU$ is ${{\mathscr C}^2}$ and if $D_m^2\bU$ exists and is bounded and jointly continuous, then $u$ is twice continuously differentiable, and
\begin{align*}
D_{x_k}D_{x_j}u_n(\bm{x}) =  \frac{1}{n^2}D^2_m\bU(m^n_{\bm{x}},x_j,x_k) + \delta_{j,k}\frac{1}{n}D_vD_m\bU(m^n_{\bm{x}},x_j),
\end{align*}
where $\delta_{j,k} = 1$ if $j=k$ and $\delta_{j,k} = 0$ if $j\neq k$. 
\end{enumerate}
\end{proposition}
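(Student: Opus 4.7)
The plan is to prove both parts by applying the defining integral identity \eqref{def:measure-derivative} to perturbations of a single coordinate of $\bm{x}$ and then passing to the limit, exploiting the fact that for such perturbations the signed measure $m^n_{\bm{x}'} - m^n_{\bm{x}}$ is supported at just two points and can be treated very explicitly.

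For part (i), fix $j$, $h \in \R^d$, and set $\bm{x}' = \bm{x} + h e_j$ (so $x_j' = x_j + h$ and $x_i' = x_i$ for $i \ne j$). Writing $m^s := (1-s) m^n_{\bm{x}} + s m^n_{\bm{x}'}$, identity \eqref{def:measure-derivative} together with $m^n_{\bm{x}'} - m^n_{\bm{x}} = \frac{1}{n}(\delta_{x_j + h} - \delta_{x_j})$ gives
\begin{equation*}
u_n(\bm{x}') - u_n(\bm{x}) = \frac{1}{n} \int_0^1 \left[\frac{\delta \bU}{\delta m}(m^s, x_j + h) - \frac{\delta \bU}{\delta m}(m^s, x_j)\right] ds.
\end{equation*}
Applying the fundamental theorem of calculus in the $v$-variable to the bracket and using $D_v (\delta \bU/\delta m) = D_m \bU$, the right-hand side rewrites as $\frac{1}{n} h \cdot \int_0^1 \int_0^1 D_m \bU(m^s, x_j + r h)\, dr\, ds$. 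Dividing by $|h|$ and letting $h \to 0$, the joint continuity and boundedness of $D_m \bU$ together with the dominated convergence theorem (and the fact that $m^s \to m^n_{\bm{x}}$ in $\W_\newexp$ as $h \to 0$) yield \eqref{def:derivative-empirical-measure}, and joint continuity of $\bm{x} \mapsto D_m \bU(m^n_{\bm{x}}, x_j)$ yields continuous differentiability of $u_n$.

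For part (ii), I would apply (i) twice. Fix $j$ and consider the map $\bm{x} \mapsto \frac{1}{n} D_m \bU(m^n_{\bm{x}}, x_j) = D_{x_j} u_n(\bm{x})$. For $k \ne j$, the second argument $x_j$ is fixed when differentiating in $x_k$, so (i) applied to the scalar function $\bU_j(m) := D_m \bU(m, x_j)$ (componentwise, treating $x_j$ as a parameter) gives $D_{x_k}[\bU_j(m^n_{\bm{x}})] = \frac{1}{n} D_m \bU_j(m^n_{\bm{x}}, x_k)$, which by the identity $D_m^2 \bU(m, v, v') = D_m (D_m \bU(\cdot, v))(m, v')$ from \cite[Lemma~2.4]{cardaliaguet-delarue-lasry-lions} equals $\frac{1}{n} D_m^2 \bU(m^n_{\bm{x}}, x_j, x_k)$. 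For the diagonal term $k=j$, the function $\bm{x} \mapsto D_m \bU(m^n_{\bm{x}}, x_j)$ depends on $x_j$ both through the measure and through the second slot, so I would expand
\begin{equation*}
D_m \bU(m^n_{\bm{x} + h e_j}, x_j + h) - D_m \bU(m^n_{\bm{x}}, x_j) = \bigl[D_m \bU(m^n_{\bm{x} + h e_j}, x_j + h) - D_m \bU(m^n_{\bm{x}}, x_j + h)\bigr] + \bigl[D_m \bU(m^n_{\bm{x}}, x_j + h) - D_m \bU(m^n_{\bm{x}}, x_j)\bigr],
\end{equation*}
treating the first bracket via the argument of part (i) applied to $\bU_{j,h}(m) := D_m \bU(m, x_j + h)$ (whose measure derivative is $D_m^2 \bU(\,\cdot\,, x_j+h, \cdot)$, bounded and jointly continuous by hypothesis), and the second bracket by the fundamental theorem of calculus in $v$, producing the $D_v D_m \bU$ contribution. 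Combining and passing to the limit $h \to 0$ yields the stated diagonal formula with the Kronecker $\delta_{j,k}$.

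The main obstacle is essentially bookkeeping: correctly identifying the two distinct modes of $x_j$-dependence in the diagonal case $k=j$ and justifying that the measure-derivative of the ``partial'' functional $m \mapsto D_m \bU(m, v)$ is indeed $D_m^2 \bU$ in the sense defined above. This is a direct consequence of the reformulation $D_m^2 \bU(m, v, v') = D_m(D_m \bU(\cdot, v))(m, v')$ recalled in the excerpt, so once that identification is in hand, everything else is an application of (i) coordinate-by-coordinate plus dominated convergence using the boundedness/continuity hypotheses on $D_m^2 \bU$ and $D_v D_m \bU$.
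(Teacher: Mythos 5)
Your proof is correct, and it rests on the same three ingredients as the paper's argument (the defining identity \eqref{def:measure-derivative}, the fundamental theorem of calculus in the $v$-variable to convert increments of $\delta\bU/\delta m$ into $D_m\bU$, and bounded convergence), but it is organized differently. The paper first establishes the general directional-derivative formula \eqref{pf:Uderivphi} along pushforwards $m\mapsto m\circ(\mathrm{Id}+h\phi)^{-1}$ for bounded continuous $\phi$, and then specializes to a $\phi$ equal to $v$ at $x_j$ and vanishing at the other $x_i$; this forces a preliminary reduction to the case of distinct points, handled ``by continuity.'' You instead perturb the single coordinate $x_j$ directly and use that $m^n_{\bm{x}+he_j}-m^n_{\bm{x}}=\tfrac1n(\delta_{x_j+h}-\delta_{x_j})$ as signed measures, which makes the computation immediate and sidesteps the distinctness reduction entirely. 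For part (ii) the paper says only that it ``follows by applying (i) again''; your write-up supplies the bookkeeping it omits, in particular the splitting of the diagonal increment $k=j$ into a measure-argument part (giving $\tfrac{1}{n^2}D_m^2\bU$) and a $v$-argument part (giving $\tfrac1n D_vD_m\bU$), together with the identification $D_m\bigl(D_m\bU(\cdot,v)\bigr)(m,v')=D_m^2\bU(m,v,v')$. The only caveat, shared by both arguments, is that passing to the limit in the $v$-argument part uses existence and continuity of $D_vD_m\bU$ near $(m^n_{\bm{x}},x_j)$, which the proposition needs even to state its conclusion but does not list explicitly among its hypotheses.
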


\subsection{Nash systems and $n$-player games} \label{se:Nashsystems}

We fix throughout the paper a filtered probability space $(\Omega,\F,\FF=(\F_t)_{t \in [0,T]},\PP)$, supporting independent $\FF$-Wiener processes $W$ of dimension $d_0$ and $(B^i)_{i=1}^\infty$ of dimension $d$
(we choose the dimension of the idiosyncratic noises equal to the dimension of the state space for convenience only), as well as a sequence of 
i.i.d.\ ${\mathcal F}_{0}$-measurable $\R^d$-valued initial states $(X^i_0)_{i=1}^{\infty}$ with distribution $\mu_0$.

We describe the $n$-player game and PDE systems first, deferring a precise statement of assumptions to Section \ref{se:assumptions}. 
We are given {an exponent $p^*$}, an action space $A$, assumed to be a Polish space, and Borel measurable functions
\begin{align*}
(b,f) &: \R^d \times \P^{{p^*}}(\R^d) \times A \rightarrow \R^d \times \R, \\
g &: \R^d \times \P^{{p^*}}(\R^d) \rightarrow \R,
\end{align*}
along with two matrices $\sigma \in \R^{d \times d}$ and $\sigma_0 \in \R^{d \times d_0}$.

In the $n$-player game, players $i=1,\ldots,n$ control state process $(\bm{X}_t=(X^1_t,\ldots,X^n_t))_{t \in [0,T]}$, given by
\begin{align}
dX^i_t = b(X^i_t,m^n_{\bm{X}_t},\alpha^i(t,\bm{X}_t))dt + \sigma dB^i_t + \sigma_0 dW_t, \label{def:SDEnplayer}
\end{align}
where we recall that $m^n_{\bm{X}_t}$ denotes the empirical measure of the vector $\bm{X}_t$.
Here $\alpha^i$ is the control chosen by player $i$ in feedback form. The objective of player $i$ is to try to choose $\alpha^i$ to minimize
\[
J^{n,i}(\alpha^1,\ldots,\alpha^n) = \E\left[\int_0^Tf(X^i_t,m^n_{\bm{X}_t},\alpha^i(t,\bm{X}_t))dt + g(X^i_T,m^n_{\bm{X}_T})\right].
\]
A (closed-loop) Nash equilibrium is defined in the usual way as a vector of feedback functions $(\alpha^1,\ldots,\alpha^n)$, where $\alpha^i : [0,T] \times (\R^d)^n \rightarrow A$ are such that the SDE \eqref{def:SDEnplayer} is unique in law, such that
\[
J^{n,i}(\alpha^1,\ldots,\alpha^n) \le J^{n,i}(\alpha^1,\ldots,\alpha^{i-1},\widetilde{\alpha},\alpha^{i+1},\ldots,\alpha^n),
\]
for any alternative choice of feedback control $\widetilde \alpha$.

From the work of \cite{bensoussan1983nonlinear}, we know that a Nash equilibrium can be built using a system of HJB equations.
Define the Hamiltonian $H : \R^d \times \P^{{p^*}}(\R^d) \times \R^d \rightarrow \R$ by
\begin{align*}
H(x,m,y) = \inf_{a \in A}\bigl[b(x,m,a) \cdot y + f(x,m,a)\bigr].
\end{align*}
Assume that this infimum is attained for each $(x,m,y)$, and
let $\widehat{\alpha}(x,m,y)$ denote a minimizer; we will place assumptions on the function $\widehat{\alpha}$ in the next section.
It is convenient to define the functionals $\widehat{b}$ and $\widehat{f}$ on $\R^d \times \P^{{p^*}}(\R^d) \times \R^d$ by
\begin{align}
\label{hatfns}
\widehat{b}(x,m,y) = b(x,m,\widehat{\alpha}(x,m,y) ) \quad \mbox{ and } \quad \widehat{f}(x,m,y) = f(x,m,\widehat{\alpha}(x,m,y) ), 
\end{align}
and note that then 
\begin{align} 
\label{exp-ham}
H(x,m,y) = \widehat{b}(x,m,y) \cdot y + \widehat{f}(x,m,y).
\end{align}
As already mentioned in the Introduction, see \eqref{intro:Nash:system}, the \emph{$n$-player Nash system} is a PDE system for $n$ functions, $(v^{n,i} : [0,T] \times (\R^d)^n \rightarrow \R)_{i=1}^{\infty}$, given by 
\begin{equation}
\label{def:Nashsystem}
\begin{split}
\partial_tv^{n,i}(t,\bm{x}) &+ H\left(x_i,m^n_{\bm{x}},D_{x_i}v^{n,i}(t,\bm{x})\right) + \sum_{j=1,j \neq i}^n D_{x_j}v^{n,i}(t,\bm{x}) \cdot \widehat{b}\left(x_j,m^n_{\bm{x}},D_{x_j}v^{n,j}(t,\bm{x})\right) \nonumber \\
&+ \frac{1}{2}\sum_{j = 1}^n\mathrm{Tr}\left[D^2_{x_j,x_j}v^{n,i}(t,\bm{x})\sigma \sigma^\top \right] + \frac{1}{2}\sum_{j,k=1}^n\mathrm{Tr}\left[D^2_{x_j,x_k}v^{n,i}(t,\bm{x})\sigma_0 \sigma_0^\top \right] = 0, 
\end{split}
\end{equation}
with terminal condition $v^{n,i}(T,\bm{x}) = g(x_i,m^n_{\bm{x}})$.

Using  (classical) solutions to the $n$-player Nash system, we may construct  an equilibrium for the $n$-player game. The $i^\text{th}$ agent uses the feedback control
\[
[0,T] \times (\R^d)^n \ni (t,\bm{x}) \mapsto \widehat{\alpha}\left(\bm{x},m^n_{\bm{x}},D_{x_i}v^{n,i}(t,\bm{x})\right).
\]
As a result, the equilibrium state process $\bm{X}=(X^1,\ldots,X^n)$ is governed by
\begin{align}
dX^i_t &= \widehat{b}(X^i_t,m^n_{\bm{X}_t},D_{x_i}v^{n,i}(t,\bm{X}_t))dt + \sigma dB^i_t + \sigma_0 dW_t.  \label{def:Nash-SDEsystem}
\end{align}
Under Assumption \ref{assumption:A} of Section \ref{se:assumptions} below, the SDE \eqref{def:Nash-SDEsystem} is uniquely solvable. 
Indeed,
due to assumption \ref{assumption:A}(4), the second derivatives of $v^{n,i}$ exist and are continuous, which ensures that $D_{x_i}v^{n,i}$ is locally Lipschitz. In light of Assumption \ref{assumption:A}(1) and the fact that $\bm{x} \mapsto m^n_{\bm{x}}$ is a Lipschitz function from $(\R^d)^n$ to $(\P^{{p^*}}(\R^d),\W_{{p^*}})$, this ensures that
the SDE system \eqref{def:Nash-SDEsystem} has a unique strong solution.

\subsection{The mean field game and master equation}
The master equation is a PDE for a function $U : [0,T] \times \R^d \times \P^{p*}(\R^d) \rightarrow \R$, given by
\begin{align}
0 = \ &\partial_tU(t,x,m) + H(x,m,D_xU(t,x,m)) \nonumber \\
	&+ \frac{1}{2}\mathrm{Tr}\left[(\sigma\sigma^\top+\sigma_0\sigma_0^\top) D_x^2U(t,x,m)\right] \nonumber \\
&+ \int_{\R^d}\widehat{b}(v,m,D_xU(t,v,m)) \cdot D_mU(t,x,m,v) \,dm(v) \nonumber \\
&+ \frac{1}{2}\int_{\R^d}\mathrm{Tr}\left[(\sigma\sigma^\top+\sigma_0\sigma_0^\top) D_vD_mU(t,x,m,v)\right]\,dm(v)   \label{def:masterequation}  \\
&+ \frac{1}{2}\int_{\R^d}\int_{\R^d}\mathrm{Tr}\left[\sigma_0 \sigma_0^\top D^2_mU(t,x,m,v,v')\right]\,dm(v)\,dm(v') \nonumber \\
&+ \int_{\R^d}\mathrm{Tr}\left[\sigma_0 \sigma_0^\top D_xD_mU(t,x,m,v)\right]\,dm(v), \nonumber
\end{align}
for $(t,x,m) \in (0,T) \times \R^d \times \P^{p*}(\R^d)$, with terminal condition $U(T,x,m) = g(x,m)$. The connection between the Nash system and the master equation will be made clear in Proposition \ref{pr:master-nearly-nash} below; roughly speaking, $v^{n,i}(t,\bm{x})$ is expected to be close to $U(t,x_{i},m^n_{\bm{x}})$ as $n$ tends to infinity. 

Just as the $n$-player Nash system was used to build an equilibrium for the $n$-player games, we will use the master equation to describe an equilibrium for the mean field game. First, consider the McKean-Vlasov equation
\begin{align}
d\X_t = \widehat{b}(\X_t,\mu_t,D_xU(t,\X_t,\mu_t))dt + \sigma dB^1_t + \sigma_0 dW_t, \quad \X_0 = X^1_0, \quad \mu = \L(\X | W), \label{def:MKV-conditional}
\end{align}
where $\L(\X | W)$ denotes the conditional law of $\X$ given (the path) $W$, viewed as a random element of $\P^{p^*}(\C^d)$.
Here, a solution $\X=(\X_t)_{t \in [0,T]}$ is required to be adapted to the filtration generated by the process $(X^1_0,W_t,B^1_t)_{t \in [0,T]}$. Notice that necessarily $\mu_t=\L(\X_t | W) = \L(\X_t | (W_s)_{s \in [0,t]})$ a.s., for each $t \in [0,T]$, because $(W_s-W_t)_{s \ge t}$ is independent of $(\X_s,W_s)_{s \le t}$.
Assumptions \ref{assumption:A}(1) and \ref{assumption:A}(5), stated in Section \ref{se:assumptions} below, ensure that there is a unique strong solution to \eqref{def:MKV-conditional}; this follows from a straightforward adaptation of the arguments of Sznitman \cite[Chapter 1]{sznitman1991topics} (cf. \cite[Section 7]{carmona2016probabilistic}
and {\cite[Chapter 2, Section 2.1]{CarmonaDelarue_book_II}}).
For the reader who is more familiar with the PDE formulation of mean field games, we emphasize that the process
$(\mu_t)_{t \in [0,T]}$ is a weak solution to the stochastic Fokker-Planck equation
\begin{equation*}
d \mu_{t}=
- \textrm{\rm div} \bigl( 
\widehat b(\cdot,\mu_{t},D_x  U(t,\cdot,\mu_{t})) \mu_{t}
\bigr) dt + \tfrac12
\textrm{\rm Tr}[ 
D^2_x \mu_{t}
( \sigma \sigma^\top 
+ \sigma_{0} \sigma_{0}^\top )  ]
 dt 
- 
\bigl( \sigma^{\top}_0 D_x \mu_{t}
\bigr) \cdot
dW_{t},  
\end{equation*}
for $t \in [0,T]$, which follows from a straightforward application of It\^o's formula to
the process 
$(\phi(X_{t}))_{t \in [0,T]}$ for smooth test functions $\phi$. 

Since $U$ is a classical solution to the master equation with bounded derivatives (see Assumptions \ref{assumption:A}(1) and \ref{assumption:A}(5) in Section \ref{se:assumptions} below), it is known that the measure flow $\mu$ constructed from the McKean-Vlasov equation \eqref{def:MKV-conditional} is the unique equilibrium of the mean field game; see for instance \cite[Proposition 5.106]{CarmonaDelarue_book_I}. A mean field game equilibrium is usually defined as a fixed point of the map $\Phi$ which sends a $W$-measurable random measure $\mu$ on $\C^d$ (such that $(\mu_t)_{t \in [0,T]}$ is adapted to the filtration generated by $W$) to a new random measure $\Phi(\mu)$, defined as follows:
\begin{enumerate}[(i)]
\item Solve the stochastic optimal control problem, with $\mu$ fixed:
\begin{align*}
\begin{cases}
\sup_\alpha \E\left[\int_0^Tf(X_t,\mu_t,\alpha_t)dt + g(X_T,\mu_T)\right], \\
\text{s.t. } dX_t = b(X_t,\mu_t,\alpha_t)dt + \sigma dB^1_t + \sigma_0 dW_t.
\end{cases}
\end{align*}
\item Letting $X^*$ denote the optimally controlled state process, set $\Phi(\mu) = \L(X^*|W)$.
\end{enumerate}
Note that if the optimization problem in step (i) has multiple solutions, the map $\Phi$ may be set-valued, and we seek $\mu$ such that $\mu \in \Phi(\mu)$.
The original formulation of Lasry and Lions \cite{lasrylions} is a forward-backward PDE system, which is essentially equivalent to this fixed point procedure, when $\sigma_0=0$. When $\sigma_0 \neq 0$, the forward-backward PDE becomes stochastic, but the same connection remains. For more details on the connection between the master equation and more common PDE or probabilistic formulations of mean field games, see \cite{bensoussan2015master,bensoussan2017interpretation,carmona2014master} or \cite[Section 1.2.4]{cardaliaguet-delarue-lasry-lions}. For our purposes, we simply take the McKean-Vlasov equation \eqref{def:MKV-conditional} as the definition of $\mu$.

\subsection{Assumptions} \label{se:assumptions}
The following standing assumption holds throughout the paper. In fact, more will be needed in Section 
\ref{se:statements}
to derive the central limit theorem,  
but  Assumptions \textbf{A}, \textbf{B} and \textbf{B'} below form the foundation behind the key estimates used this paper and as well as the 
companion \cite{dellacram18b}.

\begin{assumption}{\textbf{A}} \label{assumption:A} $\ $

\begin{enumerate}
\item The exponent $p^*$ lies in $[1,2]$. A minimizer $\widehat{\alpha}(x,m,y) \in \arg\min_{a \in A}\bigl[b(x,m,a) \cdot y + f(x,m,a)\bigr]$ exists for every $(x,m,y) \in \R^d \times \P^{{p^*}}(\R^d) \times \R^d$, 
such that the function $\widehat{b}(x,m,y)$ defined in \eqref{hatfns} is Lipschitz in all variables. That is, there exists $C <  \infty$ such that, for all $x,x',y,y' \in \R^d$ and $m,m' \in \P^{{p^*}}(\R^d)$, 
\begin{align*}
|\widehat{b}(x,m,y) - \widehat{b}(x',m',y')| \le C\left(|x-x'| + \W_{{p^*}}(m,m') + |y-y'|\right),
\end{align*}
where we recall that $\W_{p^*}$ is shorthand for $\W_{{p^*},(\R^d, |\cdot|)}$.
\item The $d \times d$ matrix $\sigma$ is non-degenerate. 
\item The initial states $(X^i_0)_{i=1}^\infty$ are i.i.d. with law $\mu_0 \in \P^{p'}(\R^d)$ for some $p' > 4$.
\item For each $n$, the $n$-player Nash system \eqref{def:Nashsystem} has a classical solution $(v^{n,i})_{i=1}^n$, in the sense that each function $v^{n,i}(t,\bm{x})$ is continuously differentiable in $t$ and twice continuously differentiable in $\bm{x}$. Moreover, $D_{x_j}v^{n,i}$ has linear growth and $v^{n,i}$ has quadratic growth, for each $n,i,j$. That is, there exist $L_{n,i} > 0$ and $L_{n,i,j} > 0$ such that, for all $t \in [0,T]$ and $\bm{x} \in (\R^d)^n$,
\begin{align*}
|D_{x_j}v^{n,i}(t,\bm{x})| &\le L_{n,i,j}\left(1 + |\bm{x}|\right), \\
|v^{n,i}(t,\bm{x})| &\le L_{n,i}\left(1 + |\bm{x}|^2\right),
\end{align*}
where $|\bm{x}|$ in \ref{assumption:A}(4) is the Euclidean norm of $\bm{x} \in (\RR^d)^n \cong \R^{dn}$.

\item The master equation admits a classical solution $U: [0,T] \times \RR^d \times \P^{p^*}(\RR^d) \ni (t,x,m) 
\mapsto U(t,x,m)$. The derivative $D_xU(t,x,m)$ exists and is Lipschitz in $(x,m)$, uniformly in $t$
{(using $\cW_{p^*}$ for the argument $m \in {\mathcal P}^{p^*}(\RR^d)$)}, and $U$ admits continuous derivatives $\partial_tU$, $D_xU$, $D_mU$, $D_x^2U$, $D_vD_mU$, $D_xD_mU$, and $D_{m}^2U$. Moreover, $D_xU$, $D_mU$, $D_xD_mU$, and $D_m^2U$ are assumed to be bounded.
\end{enumerate}
\end{assumption}
Assumptions (A.4-5) are heavy, owing to the difficulty of solving the master equation and $n$-player systems.
As for \ref{assumption:A}(4), earlier results on the solvability of systems of quasilinear parabolic equations may be found in \cite{ladyzhenskaia1988linear}. Those results suffice to guarantee the solvability of the Nash system when the Hamiltonian $H$ is globally Lipschitz,
see for instance 
\cite[Section 2.4.4]{cardaliaguet-delarue-lasry-lions}, 
 but a modicum of care is needed when the Hamiltonian is quadratic in the variable $y$. We refer to 
\cite{bensoussan1983nonlinear}
and to the forthcoming book \cite{CarmonaDelarue_book_II} (see Section 6.3.1 of Chapter 6 therein) for precise solvability results that fit our framework. 
Regarding the master equation,  the main examples of solvability  
we have in mind may be found in \cite[Theorems 2.8 and 2.9]{cardaliaguet-delarue-lasry-lions}. Results therein provide tractable assumptions on the data $(b,f,g)$ that ensure the existence and uniqueness of a classical solution to the master equation satisfying \ref{assumption:A}(5). Yet it must be stressed that the results of \cite{cardaliaguet-delarue-lasry-lions} are stated on the torus and under the rather demanding assumption that the Hamiltonian $H$ is globally Lipschitz. Again, we refer to the forthcoming book \cite{CarmonaDelarue_book_II} (specifically Section 5.4 of Chapter 5 therein) for an extension to the Euclidean setting that includes quadratic Hamiltonians.
We also refer to \cite[Theorems 5.3 and 5.5]{chassagneux2014probabilistic} for another result on existence and uniqueness for the master equation, although the boundedness requirements of the derivatives, as stated in \ref{assumption:A}(5), are not entirely verified there. 
Lastly, observe that, 
for some of the arguments below, 
the assumptions on the second order derivatives of $U$ in the direction of the measure can be relaxed
when $\sigma_{0} = 0$, i.e., when there is no common noise. 
This fact is made clear in \cite{cardaliaguet-delarue-lasry-lions}: Therein, the assumptions that are needed to prove the 
solvability of the master equation 
 are slightly weaker in the absence of a common noise (compare Theorems 2.8 and 2.9 in \cite{cardaliaguet-delarue-lasry-lions}). We refrain from distinguishing the two cases $\sigma_{0}=0$
and $\sigma_{0} \not =0$ in our paper, since 
the second-order derivatives of $U$ with respect to $m$ will be needed to establish the CLT, even in the case 
$\sigma_{0}=0$, see
Assumption 
\ref{assumption:C} in 
Section \ref{subse:add:assumtions:clt}.

As indicated above, our analysis heavily depends on properties of the Hamiltonian $H$. We thus need an additional assumption to control the growth of the function $\widehat{f}$, which is defined in \eqref{hatfns}, using of course the same function $\widehat{\alpha}$ from Assumption \ref{assumption:A}(1). We provide two alternatives in Assumptions \ref{assumption:B} and \ref{assumption:B'}. Assumption \ref{assumption:B} is more in the spirit of 
\cite{cardaliaguet-delarue-lasry-lions}, while Assumption \ref{assumption:B'} fits the framework addressed in \cite[Sections 5.4 and 6.3]{CarmonaDelarue_book_II}.

\begin{assumption}{\textbf{B}} \label{assumption:B} $\ $
$\widehat{f}(x,m,y)$ is Lipschitz in $y$, uniformly in $(x,m)$. That is, there exists $C > 0$ such that, for all $x,y,y' \in \R^d$ and $m \in \P^{{p^*}}(\R^d)$, 
\begin{align*}
|\widehat{f}(x,m,y) - \widehat{f}(x,m,y')| \le C|y-y'|.
\end{align*}
\end{assumption}

We will make use of one more set of assumptions, which weakens the Lipschitz assumption on $\widehat{f}$ in favor of uniform boundedness of solutions to the $n$-player Nash systems.
Note that boundedness of $D_xU$ and $D_mU$ in Assumption \ref{assumption:A}(5) ensures that $U$ has linear growth, by Lemma \ref{le:lineargrowth}. On the other hand, the following assumption will require $U$ to be bounded.

\begin{assumption}{\textbf{B'}} \label{assumption:B'} $\ $

\begin{enumerate}
\item The solution $U$ to the master equation is uniformly bounded.
\item The Nash system solutions 
$(v^{n,i})_{i=1}^{n}$ are bounded, uniformly in $n$ and $i$.
\item $\widehat{f}(x,m,y)$ is locally Lipschitz in $y$ with quadratic growth, uniformly in $(x,m)$. That is, there exists $C > 0$ such that, for all $x,y,y' \in \R^d$ and $m \in \P^{{p^*}}(\R^d)$, 
\begin{align*}
|\widehat{f}(x,m,y) - \widehat{f}(x,m,y')| \le C(1 + |y| + |y'|)|y-y'|.
\end{align*}
\end{enumerate}
\end{assumption}

\begin{remark}
It is typically difficult to find estimates on the Nash system solutions $v^{n,i}$ which are uniform in $n$, and this is a key challenge of MFG analysis. However, assumption \ref{assumption:B'}(2) can be directly verified in many situations. For instance, if $\widehat{f}$ is of the form $\widehat{f}(x,m,y) = f_1(x,m) + f_2(x,m,y)\cdot y$, where $f_2$ is of linear growth in $y$ uniformly in $(x,m)$, and if
{$f_{1}$ and $g$ are uniformly bounded}, then \ref{assumption:B'}(2) holds. {See \cite[Chapter 6, Section 6.3.1]{CarmonaDelarue_book_II} for details.}
\end{remark}

It may appear that we have not imposed any assumptions directly on the terminal cost function $g$. In fact, there are implicit requirements coming from Assumption \ref{assumption:A}(5) along with the boundary condition $U(T,x,m)=g(x,m)$.

\section{Statements of main results} \label{se:statements}

This section summarizes the main results of the paper on the $n$-player equilibrium empirical measures 
$(m^n_{\bm{X}})_{n \geq 1}$ and on their
marginal flows 
$((m^n_{\bm{X_{t}}})_{t \in [0,T]})_{n \geq 1}$, defined by the SDE \eqref{def:Nash-SDEsystem}. Proofs are deferred to later sections. 

\subsection{Law of large numbers}
\label{se:LLN-statements}

We first state the law of large numbers, regarding the convergence of $(m^n_{\bm{X}})_{n \geq 1}$ to $\mu$, where $\mu$ is defined by the McKean-Vlasov equation \eqref{def:MKV-conditional}. This is essentially the result of \cite{cardaliaguet-delarue-lasry-lions}, see also
\cite[Chapter 6]{CarmonaDelarue_book_II},
though we
formulate the result in Euclidean space instead of on the torus; we include the proof in Section \ref{se:LLN}.

\begin{theorem} \label{th:LLN}
Suppose Assumption \ref{assumption:A} holds, as well as either \ref{assumption:B} or \ref{assumption:B'}. Then 
\[
\lim_{n\rightarrow\infty}\E\left[\W_{2,\C^d}^2(m^n_{\bm{X}},\mu)\right] = 0.
\]
\end{theorem}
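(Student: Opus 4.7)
The plan is to prove the law of large numbers by inserting the auxiliary McKean--Vlasov particle system $\bm{\overline{X}} = (\overline{X}^1, \ldots, \overline{X}^n)$ defined in \eqref{intro:particles}, driven by the same Wiener processes and started from the same initial conditions as the Nash system $\bm{X}$. Via the triangle inequality,
\begin{align*}
\E\bigl[\W_{2,\C^d}^2(m^n_{\bm{X}}, \mu)\bigr] \leq 2\,\E\bigl[\W_{2,\C^d}^2(m^n_{\bm{X}}, m^n_{\bm{\overline{X}}})\bigr] + 2\,\E\bigl[\W_{2,\C^d}^2(m^n_{\bm{\overline{X}}}, \mu)\bigr],
\end{align*}
the statement reduces to showing that each term on the right vanishes as $n\to\infty$.

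For the first (Nash versus McKean--Vlasov) term, the coupling via identity indexing yields
\begin{align*}
\W_{2,\C^d}^2(m^n_{\bm{X}}, m^n_{\bm{\overline{X}}}) \leq \frac{1}{n}\sum_{i=1}^n \|X^i - \overline{X}^i\|_\infty^2,
\end{align*}
so by exchangeability it suffices to show $\E\|X^1 - \overline{X}^1\|_\infty^2 \to 0$. Subtracting the SDEs \eqref{def:Nash-SDEsystem} and \eqref{intro:particles}, the difference of drifts is a difference of $\widehat{b}$ evaluated at $(X^i_t, m^n_{\bm{X}_t}, D_{x_i} v^{n,i}(t,\bm{X}_t))$ and at $(\overline{X}^i_t, m^n_{\bm{\overline{X}}_t}, D_x U(t, \overline{X}^i_t, m^n_{\bm{\overline{X}}_t}))$. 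Using the Lipschitz continuity of $\widehat{b}$ from Assumption \ref{assumption:A}(1) and the boundedness of $D_x U$ from Assumption \ref{assumption:A}(5), a Grönwall argument reduces the task to controlling $D_{x_i} v^{n,i}(t,\bm{X}_t) - D_x U(t, X^i_t, m^n_{\bm{X}_t})$. This is exactly the content of Theorem \ref{th:mainestimate}, which rests on Proposition \ref{pr:master-nearly-nash} showing that $u^{n,i}(t,\bm{x}) := U(t, x_i, m^n_{\bm{x}})$ almost solves the Nash system, and which yields the sharp $O(n^{-2})$ estimate \eqref{intro:O(1/n^2)}. This is far more than enough to kill the first term.

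For the second (McKean--Vlasov versus limit) term, I would use conditional propagation of chaos. Given the common-noise path $W$, the processes $(\overline{X}^i)_{i=1}^n$ are i.i.d.\ with common conditional law $\mu$ on $\C^d$, because they share $W$ but have conditionally independent idiosyncratic noises $B^i$ and i.i.d.\ initial conditions $X^i_0$. Applying a standard quantitative Wasserstein estimate for empirical measures of i.i.d.\ samples, conditionally on $W$, gives $\E[\W_{2,\C^d}^2(m^n_{\bm{\overline{X}}}, \mu) \mid W] \to 0$, with a rate depending only on the finite moments of $\mu$. The needed higher moment $\E[\|\overline{X}^1\|_\infty^{q}] < \infty$ for some $q > 2$ follows from Assumption \ref{assumption:A}(3), which supplies $\mu_0 \in \P^{p'}(\R^d)$ with $p' > 4$, together with boundedness of $\widehat{b}(\cdot,\cdot,D_xU)$ along paths via Assumption \ref{assumption:A}(1) and \ref{assumption:A}(5); dominated convergence then closes this term.

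The main obstacle is the Nash-to-McKean--Vlasov comparison: it rests entirely on the gradient estimate Theorem \ref{th:mainestimate}, which is nontrivial because $v^{n,i}$ and $u^{n,i}$ solve genuinely different PDE systems, and requires a careful analysis of the defect terms quantifying how $U$ fails to solve the $n$-player Nash system in Proposition \ref{pr:master-nearly-nash}. Assumption \ref{assumption:B} or \ref{assumption:B'} enters precisely at this step to control the Hamiltonian-related error terms. Once Theorem \ref{th:mainestimate} is in hand, the McKean--Vlasov propagation-of-chaos step is standard common-noise machinery, and the theorem follows.
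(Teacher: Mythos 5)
Your decomposition is exactly the paper's proof: insert the McKean--Vlasov system $\bm{\overline{X}}$, bound $\E[\W_{2,\C^d}^2(m^n_{\bm{X}},m^n_{\bm{\overline{X}}})]$ by $C/n^2$ via the identity coupling and the estimate \eqref{def:estimate-|X-Xbar|} of Theorem \ref{th:mainestimate} (resp.\ Theorem \ref{th:mainestimate2} under Assumption \ref{assumption:B'}), and dispose of $\E[\W_{2,\C^d}^2(m^n_{\bm{\overline{X}}},\mu)]$ by a standard McKean--Vlasov limit theorem. The one slip is in your justification of the second term: the processes $(\overline{X}^i)_{i=1}^n$ are \emph{not} conditionally i.i.d.\ given $W$, since each drift involves the empirical measure $m^n_{\bm{\overline{X}}_t}$ and hence all $n$ idiosyncratic noises. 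The conditionally i.i.d.\ system is the decoupled one, $\widehat{X}^i$ of \eqref{eq:hat:coupling}, in which $m^n_{\bm{\overline{X}}_t}$ is replaced by $\mu_t$; the standard propagation-of-chaos argument you invoke consists of first coupling $\overline{X}^i$ with $\widehat{X}^i$ (a Gr\"onwall/Sznitman argument using the Lipschitz continuity of $\widehat{b}(\cdot,\cdot,D_xU(\cdot,\cdot,\cdot))$) and only then applying the quantitative Wasserstein convergence of empirical measures of conditionally i.i.d.\ samples. With that intermediate coupling restored, your argument is complete and coincides with the paper's.
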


There are several ways to identify the rate of convergence in Theorem \ref{th:LLN}, and the interested reader is referred to the companion paper \cite{dellacram18b} for a detailed discussion.
Notably, the dimension $d$ appears in the rate, and we explain below the role of the dimension in the central limit theorem.

\subsection{Diffusively scaled fluctuations} 
\label{se:CLT-statements}
The central limit theorem is stated as a limit theorem for the sequence of rescaled differences 
$((S^n_{t} := \sqrt{n}(m^n_{\boldsymbol X_{t}} - \mu_{t}))_{t \in [0,T]})_{n \geq 1}$
between the  empirical measures and the 
solution to the MFG constructed above. 

To state a central limit theorem, we 
will regard each $(S^n_{t})_{t \in [0,T]}$ as a process with values in a distributional space. 
To do so, we use the same spaces as in \cite{meleard} or \cite{jourdain-meleard}. For an integer $j \geq 0$, a real $\alpha >0$,
and a smooth function $g$ on $\R^d$ with compact support, we let:
\begin{equation}
\label{eq:cH:j,alpha}
\| g \|_{j,\alpha}^2
:= \sum_{\vert {\boldsymbol k} \vert \leq j} \int_{\R^d} \frac{\vert D^{\boldsymbol k} g(x) \vert^2}{1+ \vert x \vert^{2 \alpha}} dx,
\end{equation}
where ${\boldsymbol k}$ denotes a multi-index ${\boldsymbol k}=(k_{1},\cdots,k_{d})$ and $\vert {\boldsymbol k} \vert 
:= k_{1} + \cdots + k_{d}$ is its length. We then call ${\mathcal H}^{j,\alpha}$ the completion of the set of smooth functions on $\R^d$ with compact support with respect to $\| \cdot \|_{j,\alpha}$. It is a Hilbert space when equipped with $\| \cdot \|_{j,\alpha}$. We denote by ${\mathcal H}^{-j,\alpha}$ its dual space. Below, we see $((S^n_{t})_{t \in [0,T]})_{n \geq 1}$ as a sequence with values in $\cH^{-(2+2\newD),\newD}$ for 
\[
\newD := \lfloor d/2 \rfloor +1.
\]

The limiting law of the sequence $((S^n_{t})_{t \in [0,T]})_{n \geq 1}$ will be identified as the solution to a properly stated SPDE. In order to formulate this SPDE, we define, for any $m \in \P^{p^*}(\R^d)$, the operator $\A_{t,m}$ by:
\begin{equation}
\label{eq:cA:clt}
\begin{split}
\bigl[ \A_{t,m}\phi \bigr](x) &:=
D_x\phi(x) \cdot \widehat{b}(x,m,D_xU(t,x,m))   + \frac12\mathrm{Tr}[D^2_x\phi(x)(\sigma\sigma^\top+\sigma_0\sigma_0^\top)] \\
	&\hspace{15pt} + \int_{\R^d}D_x\phi(v)\cdot	\frac{\delta}{\delta m}\left[\widehat{b}(v,m,D_xU(t,v,m))\right](x)\,m(dv),	\quad x \in \R^d,
\end{split}
\end{equation}	
for $\phi : \R^d \rightarrow \R$. 
Observe if needed that the derivative with respect to $m$ in the second line may be rewritten as:
\begin{align*}
&\frac{\delta}{\delta m}\left[\widehat{b}(v,m,D_xU(t,v,m))\right](x)	
\\
&=\frac{\delta}{\delta m}\widehat{b}(v,m,D_xU(t,v,m),x) + D_y\widehat{b}(v,m,D_xU(t,v,m))\frac{\delta}{\delta m}D_xU(t,v,m,x),
\end{align*}
where $D_y$ acts on the third argument of $\widehat{b}$.
We will see in  
\eqref{eq:embedding:1} that, for $\phi \in \cH^{4+2\newD,\newD}$, $D_x \phi(x)$ and $D^2_{x} \phi(x)$ exist pointwise and are bounded by $C \| \phi \|_{4+2\newD,\newD}(1+ \vert x \vert^\newD)$ for a universal constant $C$; in particular, 
$[ \A_{t,m}\phi](x)$ exist pointwise. If, as functions of $(t,x,m)$,
$\widehat{b}(x,m,D_x  U(t,x,m))$ 
and $[\delta / \delta m][\widehat{b}(y,m,D_x U(t,y,m))](x)$ are sufficiently smooth and $m \in \cP^{\newD}(\R^d)$, then 
${\mathcal A}_{t,m} \phi \in \cH^{2+2\newD,\newD}$.

The limiting SPDE is driven by three inputs: an initial condition and two noises. To state it properly, we assume that the probability space used to construct the mean field game is rich enough to carry a triple $(W,\theta_{0},\xi)$, where $W$ is the same common noise as before, $\theta_{0}$
is independent of $(W,\xi)$ and is a centered Gaussian random variable with values in $\cH^{-(2+2\newD),\newD}$ with covariance:
\begin{equation*}
\forall \phi_{1},\phi_{2} \in \cH^{2+2\newD,\newD},
\quad \E \bigl[ \theta_{0}(\phi_{1}) \theta_{0}(\phi_{2}) \bigr] = \langle \mu_{0},\phi_{1} \phi_{2} \rangle,
\end{equation*}
and where 
$\xi$ is, conditional on $W$, a continuous centered Gaussian process with values in $\cH^{-(2+2\newD),\newD}$ with 
covariance:
\begin{equation*}
\begin{split}
\forall \phi_{1},\phi_{2} \in \cH^{2+2\newD,\newD},
\ &\forall s,t \in [0,T], \quad
{\mathbb E} \bigl[ \xi_{t}(\phi_{1}) \xi_{s}(\phi_{2}) \, \vert \, W \bigr] 
= \int_{0}^{s \wedge t}
\bigl\langle \mu_{r}, \sigma \sigma^\top D_x \phi_{1} \cdot D_x\phi_{2} \bigr\rangle dr.
\end{split}
\end{equation*}
(Recall that $\mu$ is $W$-measurable.)
We will see in Section \ref{se:CLT-proofs} that both covariances are well-defined provided $p' \geq 2\newD$, with $p'$ as in Assumption {\bf A}. In both of them, 
$\langle \cdot,\cdot \rangle$ denotes the duality product.

We will see in Lemma 
\ref{lem:density} that, with probability 1, for all $t \in (0,T]$, $\mu_{t}$ has a density. As a consequence, $(\xi_{t})_{t \in [0,T]}$ may be represented 
as $(\xi_{t}(\phi) = \int_{0}
^t \sqrt{\mu_{s}} (\sigma^{\top} D_x\phi) \cdot d\beta_{s})_{0 \le t \le T}$, where
$\beta=(\beta^{1},\ldots,\beta^{d_{0}})$ is
a $d_{0}$-tuple of independent cylindrical Wiener processes with values in $L^2(\R^d)$, $\beta$ being independent of $(\theta_{0},W)$.
Since $\mu$ is $W$-measurable, 
$(W,\mu)$, 
$\theta_{0}$ and $\beta$ are independent.

We state the SPDE
for the weak limit $(S_{t})_{t \in [0,T]}$
of $((S_{t}^n)_{t \in [0,T]})_{n \geq 1}$ as an equation in $\cH^{-(4+2\newD),\newD}$:
\begin{equation}
\label{eq:spde:clt:intro}
\begin{split}
d \bigl\langle S_{t},\phi
\bigr\rangle 
&=     \bigl\langle
 S_{t}, {\mathcal A}_{t,\mu_{t}} \phi \bigr\rangle
 dt + \langle S_{t},{\sigma_{0}}^{\top} D_x \phi \rangle \cdot dW_{t} + d \xi_{t}(\phi),
\quad t \in [0,T], \quad {\phi \in \cH^{4+2\newD,\newD}},
\end{split}
\end{equation} 
where $\langle S_{t},{\sigma_{0}}^{\top} 
D_x  \phi \rangle \cdot dW_{t}$ is understood as 
$\sum_{i=1}^{d} \langle S_{t}, ({\sigma_{0}}^{\top} D_x \phi)_{i} \rangle \cdot 
dW_{t}^i$. 

As we will see in the example treated in Section \ref{se:examples}, an important feature of this SPDE is the fact that the operator 
${\mathcal A}_{t,\mu_{t}}$ is nonlocal. To clarify the challenges this presents, suppose for the moment that there is no common noise, $\sigma_{0}=0$. Then, if ${\mathcal A}_{t,\mu_{t}}$ were local, 
we could regard \eqref{eq:spde:clt:intro} as a (standard) parabolic partial differential equation
driven by the adjoint ${\mathcal A}_{t,\mu_{t}}^*$
and forced by an additive noise; then, we could  
use existing results on the fundamental solution of ${\mathcal A}_{t,\mu_{t}}^*$ to represent the solution.
In a sense, the same is true when $\sigma_{0}$ is non zero, because it suffices to shift $\phi$ into $\phi(\cdot - \sigma_{0} W_{t})$ to recover the case without common noise. 

Of course, the nonlocal term in \eqref{eq:cA:clt} arises because of the McKean-Vlasov interaction, and we must be prepared to 
face it in practice.
In the example in  
Section \ref{se:examples}, the
coefficients are linear-quadratic,
which makes  
the form of the interaction not too complicated. Ultimately, we 
manage to derive an explicit expression for the solution to 
\eqref{eq:spde:clt:intro}, by solving first for the interaction term and then constructing the entire solution by proceeding as if the structure were local.

\subsection{Additional assumptions and statement of the central limit theorem} 
\label{subse:add:assumtions:clt}
In order to state our CLT, we need the following additional set of assumptions: 
{\begin{assumption}{\textbf{C}} \label{assumption:C} $\ $
\begin{enumerate}
\item $\widehat b$ is locally bounded in $y$, uniformly in $(x,m)$.
\item $\widehat b$
and $D_x  U$ 
have derivatives up to the order $5+2\newD$
with respect to $(x,y)$ and $x$ respectively, and these derivatives are jointly continuous and uniformly bounded with respect to all arguments.
\item $\delta \widehat{b}/\delta m$ and 
$\delta (D_x  U)/\delta m$ exist, are bounded
and jointly continuous in all the arguments, and have derivatives in the variable $v$ up to the order $4+2\newD$ that are jointly continuous and uniformly bounded with respect to all arguments.
\item $\delta(D_{y} \widehat{b})/\delta m$ 
exists
and is jointly continuous and uniformly bounded in all the arguments.
$\delta (D_x  U)/\delta m$,
 $D_{y} \widehat{b}$ and 
 $\delta(D_{y} \widehat{b})/\delta m$ are Lipschitz continuous in $m$
with respect to $\W_{1}$ 
(understood here as $\W_{1,\R^d}$), uniformly in the other arguments. 
\item $\delta^2 \widehat{b}/\delta m^2$
and $\delta^2 (D_x  U)/\delta m^2$
exist, are bounded and are Lipschitz continuous in
$m$ with respect to $\W_{1}$, uniformly 
in the other arguments. 
\end{enumerate}
\end{assumption}}

\begin{theorem} 
\label{th:CLT:sec:3}
Let Assumptions \ref{assumption:A} and \ref{assumption:C} hold,
 as well as either Assumption 
\ref{assumption:B} or \ref{assumption:B'}, with with 
$p' > 12\newD$ and $p^*=1$.  
 Then, the sequence $({S}^n)_{n \geq 1}$ converges in law on the space $ C([0,T];\cH^{-(2+2\newD),\newD})$ to the unique solution to the SPDE \eqref{eq:spde:clt:intro} with $S_{0}=\theta_{0}$ as initial condition.  
\end{theorem}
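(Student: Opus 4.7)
\medskip

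\textbf{Proof plan.} The strategy, as foreshadowed in the introduction, is to replace the Nash system $\bm{X}$ by the McKean--Vlasov auxiliary system $\bar{\bm{X}}$ defined in \eqref{intro:particles}, prove the CLT for the latter, and then transfer it. Accordingly, set $\bar S^n_t := \sqrt n\bigl(m^n_{\bar{\bm X}_t}-\mu_t\bigr)$, so that $S^n_t-\bar S^n_t = \sqrt n\bigl(m^n_{\bm X_t}-m^n_{\bar{\bm X}_t}\bigr)$. My first step is to upgrade the estimate \eqref{intro:O(1/n^2)}, together with the pointwise estimates on $D_{x_i}v^{n,i}-D_{x_i}u^{n,i}$ supplied by Theorem~\ref{th:mainestimate}, into the stronger pathwise control $\E[\max_{i\le n}\|X^i-\bar X^i\|_\infty^2] = O(n^{-3})$. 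Combined with the Sobolev embedding $\cH^{2+2\newD,\newD}\hookrightarrow C^2$ of test functions into functions whose first two derivatives grow at most like $1+|x|^\newD$ (which is \eqref{eq:embedding:1}), this will yield $\sup_{t\le T}\|S^n_t-\bar S^n_t\|_{-(2+2\newD),\newD} \to 0$ in probability, reducing everything to the CLT for $\bar S^n$.

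Next, I would prove the CLT for $\bar S^n$ by applying It\^o's formula to $\langle \bar S^n_t,\phi\rangle$ for a test function $\phi\in\cH^{4+2\newD,\newD}$. Writing $\bar F^{n,i}_t := \widehat b(\bar X^i_t,m^n_{\bar{\bm X}_t},D_xU(t,\bar X^i_t,m^n_{\bar{\bm X}_t}))$ and comparing to the drift of $\mu_t$ (a conditional McKean--Vlasov SDE whose It\^o formula yields the stochastic Fokker--Planck equation recalled in the paper), one decomposes
\begin{equation*}
d\langle \bar S^n_t,\phi\rangle = \langle \bar S^n_t,\A_{t,\mu_t}\phi\rangle\, dt + \langle \bar S^n_t,\sigma_0^\top D_x\phi\rangle\cdot dW_t + dM^{n}_t(\phi) + dR^n_t(\phi),
\end{equation*}
where $M^n(\phi)$ is the martingale $\tfrac1{\sqrt n}\sum_i\int_0^t \sigma^\top D_x\phi(\bar X^i_s)\cdot dB^i_s$, and $R^n(\phi)$ gathers the remainder of the second order Taylor expansion, in the measure argument, of $m\mapsto \widehat b(\cdot,m,D_xU(t,\cdot,m))$ around $\mu_t$; here the bounds on $\delta \widehat b/\delta m$, $\delta^2\widehat b/\delta m^2$, $\delta(D_xU)/\delta m$ and $\delta^2(D_xU)/\delta m^2$ from Assumption~\ref{assumption:C} are crucial, since they allow the expansion to be carried out and the remainder bounded by $\W_1(m^n_{\bar{\bm X}_s},\mu_s)^2$. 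A standard McKean--Vlasov moment estimate then gives $\E[\sup_t|R^n_t(\phi)|]=O(n^{-1/2})$, so this term vanishes.

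For the limit identification and tightness, I would follow Méléard~\cite{meleard} in the common-noise multidimensional setting: tightness of $(\bar S^n)$ in $C([0,T];\cH^{-(2+2\newD),\newD})$ is obtained from uniform bounds on $\E\|\bar S^n_t\|_{-(1+2\newD),\newD}^2$ (via duality and the choice $\newD=\lfloor d/2\rfloor+1$ so that Sobolev functions are bounded by $1+|x|^\newD$ up to order two) together with the Aldous-type increment estimates coming from the above It\^o decomposition, which live in the larger space $\cH^{-(2+2\newD),\newD}$. Using $p'>12\newD$ ensures the requisite moment control on $|\bar X^i_t|^\newD$. Any subsequential limit $S$ is then seen, by passing to the limit in the It\^o decomposition and computing conditional quadratic variations of $M^n(\phi)$ (which converge to the covariance kernel of $\xi$ given $W$, using propagation of chaos for $m^n_{\bar{\bm X}_t}\to\mu_t$), to satisfy the SPDE \eqref{eq:spde:clt:intro} with initial condition $\theta_0$; the limit of $\bar S^n_0$ is identified as $\theta_0$ by the classical i.i.d.\ CLT applied to $(X^i_0)$. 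Finally, uniqueness for \eqref{eq:spde:clt:intro} is obtained by viewing it as a linear SPDE in the Hilbert space $\cH^{-(2+2\newD),\newD}$: for two solutions with the same initial condition, their difference $D$ satisfies a homogeneous equation, and a Gronwall argument on $\E\|D_t\|_{-(2+2\newD),\newD}^2$ (after controlling $\A_{t,\mu_t}:\cH^{4+2\newD,\newD}\to \cH^{2+2\newD,\newD}$ via Assumption~\ref{assumption:C}) forces $D\equiv 0$.

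The main obstacle will be the remainder term $R^n(\phi)$: because the interaction enters through $D_xU(t,x,m)$, which is itself a nonlinear and nonlocal functional of $m$, one cannot directly quote the multidimensional CLT of \cite{kurtz-xiong,meleard} but must push the second-order differential calculus on $\cP^1(\R^d)$ through an extra layer of composition. Keeping careful track of the order of the $\W_1$ remainder, and matching it against the dimension-dependent loss in $\W_1(m^n_{\bar{\bm X}_s},\mu_s)$, is what forces the specific polynomial weight $\newD=\lfloor d/2\rfloor+1$ and the moment assumption $p'>12\newD$.
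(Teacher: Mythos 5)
Your overall architecture (couple $\bm X$ with $\bar{\bm X}$, prove the CLT for $\bar S^n$, transfer) is the paper's, but two of your key steps would fail as stated. First, the remainder $R^n(\phi)$: you propose to bound it by $\sqrt n\,\W_1(m^n_{\bar{\bm X}_s},\mu_s)^2$ and invoke a ``standard McKean--Vlasov moment estimate.'' But $\E[\W_1(m^n,\mu)^2]$ decays like $n^{-2/d}$ for $d\ge 3$ (Fournier--Guillin), so $\sqrt n\,\W_1^2$ does not vanish for $d\ge 4$. This is precisely the obstruction the paper confronts. Its resolution is twofold: in the It\^o decomposition itself there is \emph{no} quadratic remainder --- the first-order expansion of $m\mapsto\widehat b(\cdot,m,D_xU(t,\cdot,m))$ via \eqref{def:measure-derivative} is exact, producing the operator $\cG(t,m^n_{\bar{\bm X}_t},\mu_t)$ applied to $\bar S^n_t$ rather than a remainder in $\W_1^2$; and where a quantitative LLN rate is genuinely needed (the coupling of $\bar{\bm X}$ with the decoupled i.i.d.\ system $\widehat{\bm X}$, which feeds the tightness bound of Proposition \ref{prop:tightness:1}), the paper proves the dimension-free $n^{-1/2}$ rate of Lemma \ref{lem:lln:l4:appendix} for smooth functionals of empirical measures --- this, not a Taylor remainder, is what the boundedness and $\W_1$-Lipschitz continuity of $\delta^2\widehat b/\delta m^2$ and $\delta^2(D_xU)/\delta m^2$ in Assumption \ref{assumption:C} are for. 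Without an argument of this type your proof does not survive $d\ge 4$.

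Second, your uniqueness argument does not close: since $\A_{t,\mu_t}$ maps $\cH^{4+2\newD,\newD}$ to $\cH^{2+2\newD,\newD}$ (it loses two derivatives), a Gronwall bound on $\E\|D_t\|_{-(2+2\newD),\newD}^2$ produces on the right-hand side a pairing of $D_t$ with $\A^*D_t\in\cH^{-(4+2\newD),\newD}$ that cannot be controlled by $\|D_t\|_{-(2+2\newD),\newD}^2$ alone. The paper's proof (Section \ref{subse:proof:!:spde}) works in the larger space $\cH^{-(4+2\newD),\newD}$, passes to Riesz representatives, integrates by parts in the weighted Sobolev inner product, and uses the non-degeneracy of $\sigma$ so that the coercive term $-\|\sigma^\top D_x\widetilde\varrho\|^2_{4+2\newD,\newD}$ absorbs the first-order and common-noise contributions; a mollification step is needed because the representative $\widetilde\varrho$ is a priori only in $\cH^{4+2\newD,\newD}$. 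Finally, a smaller point on the transfer step: the bound $\E[\max_{i\le n}\|X^i-\bar X^i\|_\infty^2]=O(n^{-3})$ you propose to prove is neither available from Theorem \ref{th:mainestimate} (which controls only the average, at rate $n^{-2}$) nor needed --- the Sobolev embedding plus Cauchy--Schwarz against $\frac1n\sum_i\|X^i-\bar X^i\|_\infty^2=O_{L^1}(n^{-2})$ already gives $\E[\sup_t\|S^n_t-\bar S^n_t\|_{-(2+2\newD),\newD}]=O(n^{-1/2})$, which is how the paper concludes in Section \ref{se:MFGfluctuationproof}.
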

The rationale for requiring the coefficients to be so regular may be explained as follows. In the proof of the CLT, we need to regard the coefficients (or their derivatives, depending on the step of the proof) as test functions, namely as elements of the space $\cH^{4+2\newD,\newD}$.
In this respect, the fact that the dimension $d$ of the state space enters the definition of the space of test functions
through the parameter $\newD$ should not come as a surprise. Indeed, the Wasserstein distance between a probability distribution and the empirical measure formed by $n$ i.i.d.\ samples is less and less accurate as the dimension of the space increases (see, e.g., \cite{fournier-guillin} or the refinement of Theorem \ref{th:LLN} above in our companion paper \cite[Theorem 3.1]{dellacram18b}). 
Still, we prove in Lemma 
\ref{lem:lln:l4:appendix} below the rather striking fact (which seems to be a new point) that the rate of convergence 
of the empirical distribution of a sample of i.i.d. random vectors with values $\RR^d$
can be made  
 dimension-free, provided the distance is measured using smooth enough test functions. 
Noting, for example, that the $1$-Wasserstein and total variation distances between two measures $m$ and $m'$ may each be written as a supremum of $\int\phi\,d(m-m')$ over a suitable family of test functions $\phi$, it is clear that the strength of a probability metric and thus the rate of convergence of empirical measures therein depends heavily on the regularity of the test functions. Hence, to preserve the $O(\sqrt{n})$ fluctuations, we must be prepared to refine our family of test functions. The two key difficulties are identifying the right level of smoothness and dealing with the supremum appearing in a metric of the aforementioned form. Invoking suitable forms of Sobolev embeddings seems to be the most efficient strategy for addressing both difficulties. As is  well known, Sobolev embeddings heavily depend on the dimension and this is precisely what dictates the choice of the space of test functions. 

Lastly, observe that requiring $p^*=1$ in 
Assumptions \ref{assumption:A}, \ref{assumption:B} or 
\ref{assumption:B'} is somewhat redundant with the contents of Assumption
\ref{assumption:C}. Indeed, the fact that the derivatives $\delta \hat{b}/\delta m$
and $\delta (D_{x} U)/\delta m$ have bounded derivatives in $v$ implies that
both $\hat{b}$ and $D_{x} U$ are Lipschitz continuous in $m$ with respect to ${\mathcal W}_{1}$.

As for the proof, we will follow the strategy put forth in the Introduction. Namely, we will first prove 
\eqref{intro:O(1/n^2)}
in Section 
\ref{se:mainestimates}
below. Then, extending earlier works on the same subject, we will prove the CLT for the McKean-Vlasov system \eqref{intro:particles} in Section \ref{se:CLT-proofs}. We will conclude by combining the two of them.

\section{Main estimates} \label{se:mainestimates}

All of the results {proved in this paper and in the companion one \cite{dellacram18b}} rely on the crucial estimates developed in this section.  
In the following results and proofs, $U$ is the classical solution to the master equation \eqref{def:masterequation}. The letter $C$ denotes a generic positive constant, which may change from line to line but is universal in the sense that it never depends on $i$ or $n$, though it may of course depend on model parameters, including, e.g., the bounds on {the growth and the regularity of $U$ and its derivatives}, the Lipschitz constants of $\widehat{b}$ and $\widehat{f}$, and the time horizon $T$.

\subsection{A preliminary estimate}
\label{subse:4:premiliminary}
For $(t,\bm{x}) \in [0,T] \times (\R^d)^n$, define 
\[
u^{n,i}(t,\bm{x}) = U(t,x_i,m^n_{\bm{x}}). 
\]
The following result, largely borrowed from Proposition 6.3 in \cite{cardaliaguet-delarue-lasry-lions}, shows that the functions $(u^{n,i})_{i=1}^n$ \emph{nearly} solve the $n$-player Nash system defined in Section \ref{se:Nashsystems}. We provide the proof in Appendix \ref{ap:derivatives}.

\begin{proposition} \label{pr:master-nearly-nash}
Suppose Assumption \ref{assumption:A} holds as well as either Assumption \ref{assumption:B} or \ref{assumption:B'}.
There exist a constant $C < \infty$ and, for each $n \in \N$,  continuous functions $r^{n,i} : [0,T] \times (\R^d)^n \rightarrow \R$, for $1 \le i \le n$, with 
\[ 
\|r^{n,i}\|_\infty \le C/n,
\]
such that
\begin{align*}
\partial_tu^{n,i}(t,\bm{x}) &+ H\left(x_i,m^n_{\bm{x}},D_{x_i}u^{n,i}(t,\bm{x})\right) + \sum_{j =1, j \neq i}^n D_{x_j}u^{n,i}(t,\bm{x}) \cdot \widehat{b}\left(x_j,m^n_{\bm{x}},D_{x_j}u^{n,j}(t,\bm{x})\right) \\
&+ \frac{1}{2}\sum_{j = 1}^n\mathrm{Tr}\left[\sigma \sigma^\top D^2_{x_j,x_j}u^{n,i}(t,\bm{x}) \right] + \frac{1}{2}\sum_{j,k=1}^n\mathrm{Tr}\left[\sigma_0 \sigma_0^\top  D^2_{x_j,x_k}u^{n,i}(t,\bm{x})\right] = -r^{n,i}(t,\bm{x}),
\end{align*}
for $(t,\bm{x}) \in (0,T) \times (\R^d)^n$. 
\end{proposition}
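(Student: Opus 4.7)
The plan is to substitute $u^{n,i}(t,\bm{x}) = U(t, x_i, m^n_{\bm{x}})$ directly into the left-hand side of the Nash system \eqref{def:Nashsystem} and recognize it as the left-hand side of the master equation \eqref{def:masterequation} evaluated at $(t, x_i, m^n_{\bm{x}})$, plus a remainder that is uniformly $O(1/n)$. Since the master equation vanishes, the residual is exactly $-r^{n,i}$ and the $O(1/n)$ bound follows. The proof is essentially bookkeeping, with one genuine cancellation to pin down.

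First I use Proposition \ref{pr:empiricalmeasure}, applied to the map $m \mapsto U(t,x,m)$ with $(t,x)$ frozen and combined with the chain rule for the explicit $x_i$-dependence, to derive exact formulas for the partials. For first derivatives: $D_{x_i}u^{n,i} = D_xU + \tfrac{1}{n}D_mU(\cdot, x_i)$ and $D_{x_j}u^{n,i} = \tfrac{1}{n}D_mU(\cdot, x_j)$ for $j \neq i$. For second derivatives: $D^2_{x_i,x_i}u^{n,i} = D^2_xU + \tfrac{2}{n}D_xD_mU(\cdot, x_i) + \tfrac{1}{n}D_vD_mU(\cdot, x_i) + \tfrac{1}{n^2}D^2_mU(\cdot, x_i, x_i)$ (the $\tfrac{2}{n}$ coming from differentiating both the $x$-argument and the measure argument); $D^2_{x_i,x_j}u^{n,i} = \tfrac{1}{n}D_xD_mU(\cdot, x_j) + \tfrac{1}{n^2}D^2_mU(\cdot, x_i, x_j)$ for $j \neq i$; $D^2_{x_j,x_j}u^{n,i} = \tfrac{1}{n}D_vD_mU(\cdot, x_j) + \tfrac{1}{n^2}D^2_mU(\cdot, x_j, x_j)$ for $j \neq i$; and $D^2_{x_j,x_k}u^{n,i} = \tfrac{1}{n^2}D^2_mU(\cdot, x_j, x_k)$ for distinct $j, k \neq i$. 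Plugging these in, and rewriting $\tfrac{1}{n}\sum_j$ as $\int \cdot\, dm^n_{\bm{x}}$ and $\tfrac{1}{n^2}\sum_{j,k}$ as $\iint\cdot\, dm^n_{\bm{x}}dm^n_{\bm{x}}$, the block $\tfrac{1}{2}\sum_{j,k}\mathrm{Tr}[\sigma_0\sigma_0^\top D^2_{x_j,x_k}u^{n,i}]$ reassembles \emph{exactly} into the four master-equation terms involving $\sigma_0\sigma_0^\top$, while $\tfrac{1}{2}\sum_j \mathrm{Tr}[\sigma\sigma^\top D^2_{x_j,x_j}u^{n,i}]$ matches the $\sigma\sigma^\top$ terms up to a leftover $\tfrac{1}{n}\mathrm{Tr}[\sigma\sigma^\top D_xD_mU(\cdot, x_i)] + \tfrac{1}{2n^2}\sum_j \mathrm{Tr}[\sigma\sigma^\top D^2_mU(\cdot, x_j, x_j)]$, both of which are uniformly $O(1/n)$ by the boundedness of these derivatives under Assumption \ref{assumption:A}(5).

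The substantive step is the drift. The Nash LHS contains $H(x_i,m^n_{\bm{x}},D_{x_i}u^{n,i}) + \sum_{j\neq i}D_{x_j}u^{n,i}\cdot\hat{b}(x_j,m^n_{\bm{x}},D_{x_j}u^{n,j})$, whereas the master equation contains $H(x_i,m^n_{\bm{x}},D_xU) + \int\hat{b}(v,m^n_{\bm{x}},D_xU(t,v,m^n_{\bm{x}}))\cdot D_mU(t,x_i,m^n_{\bm{x}},v)\,m^n_{\bm{x}}(dv)$, where the integral sums over all $n$ indices, including $j = i$. The Lipschitz property of $\hat{b}$ in $y$ (Assumption \ref{assumption:A}(1)) lets me replace $\hat{b}(\cdots,D_{x_j}u^{n,j})$ by $\hat{b}(\cdots,D_xU(t,x_j,m^n_{\bm{x}}))$ with a pointwise error $O(1/n)$, and after multiplying by the small prefactor $D_{x_j}u^{n,i} = \tfrac{1}{n}D_mU(\cdots)$ and summing in $j$, the total contribution is again uniformly $O(1/n)$. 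The missing $j = i$ summand is then recovered from expanding $H$ at $D_{x_i}u^{n,i}$: using $H(x,m,y) = \hat{b}(x,m,y)\cdot y + \hat{f}(x,m,y)$, the Lipschitz property of $\hat{b}$ in $y$, and either the uniform Lipschitz bound on $\hat{f}$ in $y$ (Assumption \ref{assumption:B}) or the local Lipschitz bound combined with the uniform boundedness of $D_xU$ and $\tfrac{1}{n}D_mU$ (Assumptions \ref{assumption:A}(5) and \ref{assumption:B'}), I obtain $H(x_i,m^n_{\bm{x}},D_{x_i}u^{n,i}) = H(x_i,m^n_{\bm{x}},D_xU) + \tfrac{1}{n}\hat{b}(x_i,m^n_{\bm{x}},D_xU(t,x_i,m^n_{\bm{x}}))\cdot D_mU(\cdot,x_i) + O(1/n)$ uniformly; the linear correction is precisely the missing $j = i$ term. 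The hard part is not the volume of derivative computation but ensuring that every Lipschitz-type approximation yields a \emph{uniform} $O(1/n)$ error in $(t,\bm{x})$, which hinges on the boundedness of $D_xU$, $D_mU$, $D_xD_mU$, $D_vD_mU$, $D^2_mU$ in Assumption \ref{assumption:A}(5) and on the fact that Assumptions \ref{assumption:B} or \ref{assumption:B'} control the variation of $\hat{f}$ in $y$ independently of $(x,m)$ on the relevant bounded range.
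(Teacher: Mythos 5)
Your proposal is correct and follows essentially the same route as the paper's proof in Appendix \ref{ap:derivatives}: compute the derivatives of $u^{n,i}$ via Proposition \ref{pr:empiricalmeasure} and Lemma \ref{le:commute}, reassemble the second-order sums into the master-equation integrals (exactly for the $\sigma_0\sigma_0^\top$ block, up to bounded $O(1/n)$ leftovers for the $\sigma\sigma^\top$ block), and recover the missing $j=i$ drift term from the $\widehat{b}\cdot y$ part of $H(x_i,m^n_{\bm{x}},D_{x_i}u^{n,i})-H(x_i,m^n_{\bm{x}},D_xU)$, with all Lipschitz errors made uniform by the boundedness of $D_xU$, $D_mU$ and the second-order derivatives. (Your coefficient $\tfrac{2}{n}D_xD_mU$ in $D^2_{x_i,x_i}u^{n,i}$ is the consistent one — it is what makes the $\sigma_0\sigma_0^\top$ reassembly exact — and the discrepancy with the paper's displayed formula is in any case absorbed into $r^{n,i}$.)
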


To proceed, we define an $n$-particle SDE system of McKean-Vlasov type, which we will compare to the true Nash system. Precisely,  let $\bm{\overline{X}} = (\overline{X}^1,\ldots,\overline{X}^n)$ solve the {approximating $n$-particle} system
\begin{align}
\label{eq:sec4:overlineX}
d\overline{X}^i_t &= \widehat{b}
\bigl(\overline{X}^i_t,m^n_{\bm{\overline{X}}_t},D_xU(t,\overline{X}^i_t,m^n_{\bm{\overline{X}}_t})
\bigr)dt + \sigma dB^i_t + \sigma_0 dW_t, \quad \overline{X}^i_0=X^i_0.
\end{align}
Because of Assumptions \ref{assumption:A}(1) and \ref{assumption:A}(5), this SDE system admits a unique strong solution.
We make the following convenient abbreviations: for $i, j = 1, \ldots, n$, 
\begin{align}
\label{Y-Z}
Y_{t}^i = v^{n,i}(t,\bm{X}_t), \quad\quad &Z_{t}^{i,j} = D_{x_{j}} v^{n,i}(t,\bm{X}_t), \\
\label{slantY-Z}
\Y_{t}^i = U(t,X_{t}^i,m^n_{\bm{X}_t}) = u^{n,i}(t,\bm{X}_t), \quad\quad &\Z_{t}^{i,j}= D_{x_j}u^{n,i}(t,\bm{X}_t).
\end{align}
Recalling the definition of $u^{n,i}$, we can apply  Proposition \ref{pr:empiricalmeasure} to get
\begin{align}
\Z^{i,j}_t = D_{x_j}[U(t,X^i_t,m^n_{\bm{X}_t})] = \begin{cases}
\frac{1}{n}D_mU(t,X^i_t,m^n_{\bm{X}_t},X^j_t) &\text{if } j \neq i, \\
D_xU(t,X^i_t,m^n_{\bm{X}_t}) + \frac{1}{n}D_mU(t,X^i_t,m^n_{\bm{X}_t},X^i_t) &\text{if } j = i.
\end{cases} \label{def:Zexpression}
\end{align}
In particular, recalling that $D_xU$ and $D_mU$ are bounded, we have the crucial bounds 
\begin{align}
|\Z^{i,j}_t| &\le \frac{C}{n},  \text{ for } j \neq i, \quad\quad\quad |\Z^{i,i}_t| \le C. \label{pf:mainestimate-zbounds}
\end{align}
Also, in what follows, 
for $i=1,\ldots,n$, define: 
\begin{align}
\label{def-M}
M^i_t &= \int_0^t\left(\sum_{j=1}^n \left(Z^{i,j}_s - \Z^{i,j}_s\right)  \cdot \sigma dB^j_s + \sum_{j=1}^n (Z^{i,j}_s-\Z^{i,j}_s) \cdot  \sigma_0 dW_s\right), \\
\label{def-N}
N^i_t &= \int_0^t (Y_{s}^i - \Y_{s}^i)  dM^i_{s}.
\end{align}

\subsection{Main estimate}
\label{subse:4:main}

We state and prove our main estimate first under assumptions \ref{assumption:A} and \ref{assumption:B}, and then under assumptions \ref{assumption:A} and \ref{assumption:B'}.

\begin{theorem} \label{th:mainestimate}
Suppose Assumptions \ref{assumption:A} and \ref{assumption:B} hold.
Then, there exists $C <  \infty$ such that, for each $n$,
\begin{align}
\frac{1}{n}\sum_{i=1}^n\E\left[\int_0^T\left|D_{x_i}v^{n,i}(t,\bm{X}_t) - D_xU(t,X^i_t,m^n_{\bm{X}_t})\right|^2dt\right] &\le \frac{C}{n^2}, \label{def:estimate-Dx} \\
\E\left[\frac{1}{n}\sum_{i=1}^n\|X^i - \overline{X}^i\|_\infty^2\right] &\le \frac{C}{n^2}. \label{def:estimate-|X-Xbar|}
\end{align}
Moreover, it holds almost surely that
\begin{align}
&{\frac{1}{n}\sum_{i=1}^n \|X^i - \overline{X}^i \|_{\infty}^2 \le \frac{C}{n}\sum_{i=1}^n\int_0^T\left|D_{x_i}v^{n,i}(t,\bm{X}_t) - D_xU(t,X^i_t,m^n_{\bm{X}_t})\right|^2dt}, \label{pf:mainestimates1}
\\
&{\frac{1}{n}\sum_{i=1}^n \int_0^T\left|D_{x_i}v^{n,i}(t,\bm{X}_t) - D_xU(t,X^i_t,m^n_{\bm{X}_t})\right|^2dt \le \frac{C}{n}\sum_{i=1}^n \int_0^T|Z^{i,i}_t-\Z^{i,i}_t|^2dt + \frac{C}{n^2}},
\label{eq:Dv:Du:Z:cZ}
\end{align}
{and for all $t \in [0,T]$,}
\begin{align}
&\frac{1}{n}\sum_{i=1}^n[N^i]_t \le \frac{C}{n^3}\sum_{i=1}^n[M^i]_t, \quad \text{ and } \quad \frac{1}{n}\sum_{i=1}^n[M^i]_T \le \frac{C}{n^2} + \frac{C}{n}\sum_{i=1}^n|N^i_T|. \label{def:estimates-N,M}
\end{align}
\end{theorem}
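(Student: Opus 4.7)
The plan is to derive all five bounds from a single BSDE-type comparison between the exact Nash functions $v^{n,i}$ and the master-equation proxies $u^{n,i}(t,\bm{x}) := U(t,x_i,m^n_{\bm{x}})$. Applying Itô's formula to $Y^i_t - \Y^i_t$ along the Nash dynamics \eqref{def:Nash-SDEsystem}, substituting the Nash PDE \eqref{def:Nashsystem} for $v^{n,i}$ and the nearly-Nash PDE from Proposition \ref{pr:master-nearly-nash} for $u^{n,i}$, and exploiting the decomposition $H = \widehat{b}\cdot y + \widehat{f}$ to collapse the diagonal $j=i$ term, all second-order contributions cancel, leaving
\[ d(Y^i_t - \Y^i_t) = \Delta^i_t\,dt + dM^i_t, \qquad Y^i_T - \Y^i_T = 0, \]
where
\[ \Delta^i_t = \bigl[\widehat{f}(X^i, m^n_{\bm{X}}, \Z^{i,i}) - \widehat{f}(X^i, m^n_{\bm{X}}, Z^{i,i})\bigr] + \sum_{j=1}^n \Z^{i,j}\cdot\bigl[\widehat{b}(X^j, m^n_{\bm{X}}, \Z^{j,j}) - \widehat{b}(X^j, m^n_{\bm{X}}, Z^{j,j})\bigr] + r^{n,i}. \]
The key quantitative inputs are the size bounds \eqref{pf:mainestimate-zbounds}, the Lipschitz properties of $\widehat{b}$ and $\widehat{f}$ (Assumptions \ref{assumption:A} and \ref{assumption:B} or \ref{assumption:B'}), the residual bound $\|r^{n,i}\|_\infty \le C/n$, and the non-degeneracy of $\sigma$ (Assumption \ref{assumption:A}(2)), which yields $[M^i]_T \ge c\int_0^T\sum_j|Z^{i,j} - \Z^{i,j}|^2\,ds$.

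The two pathwise inequalities are essentially algebraic and should be proved first. For \eqref{eq:Dv:Du:Z:cZ}, the explicit formula \eqref{def:Zexpression} together with boundedness of $D_mU$ gives $|D_{x_i}v^{n,i} - D_xU| \le |Z^{i,i} - \Z^{i,i}| + C/n$, and squaring and summing yields the claim. For \eqref{pf:mainestimates1}, apply Grönwall's lemma to the SDE for $X^i - \overline{X}^i$: inserting the intermediate quantities $\widehat{b}(X^i, m^n_{\bm{X}}, \Z^{i,i})$ and $\widehat{b}(X^i, m^n_{\bm{X}}, D_xU(t, X^i, m^n_{\bm{X}}))$ splits the drift difference into (i) a term bounded by $|Z^{i,i} - \Z^{i,i}|$, (ii) an $O(1/n)$ correction coming from the $(1/n)D_mU$ piece of $\Z^{i,i}$, and (iii) a term Lipschitz in $(X^i, m^n_{\bm{X}})$ (Lipschitz of $D_xU$ from Assumption \ref{assumption:A}(5) is essential here); using $\W_1(m^n_{\bm{X}}, m^n_{\bm{\overline{X}}}) \le (1/n)\sum_j|X^j - \overline{X}^j|$, Cauchy--Schwarz, and Grönwall absorbs (ii) and (iii), leaving (i) as the dominant contribution.

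To reach the main expected estimates, apply Itô's formula to $(Y^i - \Y^i)^2$, integrate from $0$ to $T$, use the terminal condition $Y^i_T = \Y^i_T$, and sum over $i$ to obtain
\[ \sum_{i=1}^n (Y^i_0 - \Y^i_0)^2 + \sum_{i=1}^n [M^i]_T = -2\sum_{i=1}^n\int_0^T (Y^i - \Y^i)\,\Delta^i\,dt - 2\sum_{i=1}^n N^i_T. \]
Taking expectations eliminates the $N^i$ martingale sum. Expanding $\Delta^i$ and using Young's inequality together with $|\Z^{i,j}| \le C/n$ for $j\neq i$ decomposes the right-hand side into (a) a term of order $1/n$ coming from $r^{n,i}$, (b) a diagonal term that by $|\Z^{i,i}|\le C$ can be absorbed by a small multiple of $\sum_i\E[[M^i]_T]$, and (c) off-diagonal terms whose $O(1/n)$ coefficient makes the Cauchy--Schwarz rearrangement of $\sum_{i\neq j}\E[|\Z^{i,j}|\cdot|Z^{j,j} - \Z^{j,j}|\cdot|Y^i - \Y^i|]$ again absorbable. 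The outcome is $\sum_i\E[[M^i]_T] \le C/n$, which together with non-degeneracy of $\sigma$ yields \eqref{def:estimate-Dx}, and combining with \eqref{pf:mainestimates1} gives \eqref{def:estimate-|X-Xbar|}. For the quadratic variation bounds \eqref{def:estimates-N,M}, a pathwise pointwise estimate $|Y^i_t - \Y^i_t| \le C/n$ almost surely is needed; it is obtained by taking conditional expectation with respect to $\F_t$ in the squared BSDE identity and iterating, exploiting the $1/n$ size of $r^{n,i}$ and of off-diagonal $\Z^{i,j}$, together with either the Lipschitz $\widehat{f}$ of Assumption \ref{assumption:B} or the uniform boundedness of $v^{n,i}$ from Assumption \ref{assumption:B'}. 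Then $[N^i]_t = \int_0^t (Y^i - \Y^i)^2\,d[M^i]_s \le (C/n^2)[M^i]_t$ yields the first inequality; the second follows from the pathwise BSDE identity at $t=0$ after bounding $|\Delta^i|$ by Cauchy--Schwarz and using the $|Y^i - \Y^i| \le C/n$ estimate.

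The main obstacle is the absorption step in the expectation argument: off-diagonal contributions in $\Delta^i$ couple every pair $(i,j)$ via $\Z^{i,j}\cdot[\widehat{b}(\cdots, \Z^{j,j}) - \widehat{b}(\cdots, Z^{j,j})]$, and organizing the resulting double sum as a quadratic form controlled by $\sum_i |Z^{i,i} - \Z^{i,i}|^2$ depends crucially on the $O(1/n)$ decay of $|\Z^{i,j}|$ for $j\neq i$. Under Assumption \ref{assumption:B'}, an added subtlety is the quadratic growth of $\widehat{f}$ in $y$: to linearize the $\widehat{f}$-difference with an $n$-uniform constant one must invoke the uniform boundedness of $v^{n,i}$ and standard gradient estimates for the parabolic Nash system to produce an a priori uniform bound on $Z^{i,i}$.
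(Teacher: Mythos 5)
Your proposal follows essentially the same route as the paper: the Itô/BSDE comparison of $Y^i=v^{n,i}(t,\bm{X}_t)$ with $\Y^i=u^{n,i}(t,\bm{X}_t)$ built on Proposition \ref{pr:master-nearly-nash}, the bounds \eqref{pf:mainestimate-zbounds}, Young's inequality, non-degeneracy of $\sigma$, and Gronwall, with the two pathwise inequalities \eqref{pf:mainestimates1} and \eqref{eq:Dv:Du:Z:cZ} obtained exactly as in the paper (Gronwall on the SDE difference, and the identity \eqref{def:Zexpression} respectively). One step, however, does not close as written. In your main expectation argument you integrate the squared identity over $[0,T]$, take expectations, and claim that Young's inequality plus absorption yields $\sum_i\E[[M^i]_T]\le C/n$. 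But after Young's inequality the right-hand side necessarily retains a term of the form $C\epsilon^{-1}\sum_i\E\bigl[\int_0^T(Y^i_s-\Y^i_s)^2\,ds\bigr]$ (coming from $2(Y^i-\Y^i)\Delta^i$), and there is nothing on the left-hand side to absorb it into; only the $Z$-difference terms are absorbable into $\sum_i[M^i]_T$. The paper resolves this by writing the identity on $[t,T]$, taking conditional expectations given $\F_t$, and applying Gronwall in $t$ to first establish $\frac1n\sum_i(Y^i_t-\Y^i_t)^2\le C/n^2$; only then is the $Z$-estimate extracted by substituting back. You invoke essentially this Gronwall/iteration argument later for the pointwise bound $|Y^i_t-\Y^i_t|\le C/n$, so the ingredient is present, but the logical order must be reversed: the Gronwall control of $(Y^i-\Y^i)^2$ has to precede (or accompany) the absorption, not follow it. Relatedly, passing to (conditional) expectations requires knowing the stochastic integrals are true martingales, which in the paper rests on the growth bounds of Assumption \ref{assumption:A}(4) via Lemma \ref{le:YZfinite-moments}; this should be stated.

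A smaller point: your closing remark about Assumption \ref{assumption:B'} is off-target for this theorem (which assumes \ref{assumption:B}), and in any case the paper does not obtain an a priori uniform gradient bound on $Z^{i,i}$ under \ref{assumption:B'}; instead Theorem \ref{th:mainestimate2} handles the quadratic growth of $\widehat{f}$ by estimating $\cosh(\eta(Y^i-\Y^i))$, so that the quadratic-in-$Z$ terms are dominated by the quadratic variation for $\eta$ large.
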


\begin{remark}
\label{rem:use:estimates}
In this paper, we make no use of \eqref{def:estimates-N,M}, which is used only in the companion paper \cite{dellacram18b}. Since the proofs follow from the same computations, we find  it more convenient to prove here all estimates at once. This permits us not to repeat the arguments in \cite{dellacram18b} and to instead  directly invoke Theorem \ref{th:mainestimate}.
\end{remark}

\begin{remark}
As we will see in Section \ref{se:LLN} below, the law of large numbers of Theorem \ref{th:LLN} follows quickly from Theorem \ref{th:mainestimate}.  
Interestingly, these theorems together imply uniqueness for the master equation, within the class described in Assumption \ref{assumption:A}, and we sketch the argument here. Let $\widetilde{U}$ denote another smooth solution, and let $(\widetilde{\X},\widetilde{\mu})$ solve the corresponding McKean-Vlasov equation 
 \eqref{def:MKV-conditional}, driven by the same Wiener processes $B^1$ and $W$, but with $D_xU$ replaced by $D_x\widetilde{U}$. Then Theorem \ref{th:LLN} shows that $(m^n_{\bm{X}_t})_{t \in [0,T]}$ converges in probability in $C([0,T];\P^{p^*}(\R^d))$ to both $(\mu_t)_{t \in [0,T]}$ and $(\widetilde{\mu}_t)_{t \in [0,T]}$, so $\mu = \widetilde{\mu}$ and $ \X=\widetilde{\X}$ a.s.

Combining 
\eqref{def:estimate-Dx} and \eqref{def:estimate-|X-Xbar|} and using in addition the smoothness of 
$D_x  U$ and $D_x  \widetilde{U}$, we get
\begin{align*}
0 &= \lim_{n \rightarrow \infty}
\frac1n \sum_{i=1}^n\E \int_{0}^T \bigl\vert D_x  U(t,\overline X_{t}^i,m_{\overline{\boldsymbol X}_{t}}^n) -
D_x  \widetilde{U}(t,\overline X_{t}^i,m_{\overline{\boldsymbol X}_{t}}^n) 
\bigr\vert^2 dt \\
	&= \EE \int_{0}^T \bigl\vert D_x  U(t,\X_{t},\mu_{t}) -
D_x  \widetilde{U}(t,\X_{t},\mu_{t}) 
\bigr\vert^2 dt.
\end{align*}
By expanding 
$(U(t,\X_{t},\mu_{t}))_{t \in [0,T]}$
and 
$(\widetilde U(t,\X_{t},\mu_{t}))_{t \in [0,T]}$
using It\^o's formula for functions defined on $\cP^2(\RR^d)$ (see for instance \cite{chassagneux2014probabilistic} for the case $\sigma_{0}=0$ and 
\cite[Chapter 4, Section 4.3]{CarmonaDelarue_book_II} for the general case), and by invoking the fact that both $U$ and $\widetilde{U}$ solve the master equation, we finally obtain that, for any $t \in [0,T]$, $U(t,\X_{t},\mu_{t})$ and $\widetilde{U}(t,\X_{t},\mu_{t})$ are almost surely equal. 
Choosing arbitrarily the initial condition of the game ({i.e., choosing arbitrarily the initial time and the initial distribution}), we deduce that $U(t,\cdot,m)$ and $\widetilde U(t,\cdot,m)$ are equal for any $t \in [0,T]$ and any $m$ with full support. 
By a standard mollification argument, the same holds true for any $m$, whether the support of $m$ is full or not.
\end{remark}

\subsection*{Proof of Theorem \ref{th:mainestimate}}
We begin by proving that \eqref{def:estimate-|X-Xbar|} follows from \eqref{def:estimate-Dx}. 
Abbreviate 
\begin{equation}
\label{widehat:widetilde:b:n:i}
\begin{split}
\widehat{b}^{n,i}(t,\bm{x}) &= \widehat{b}\bigl(x_i,m^n_{\bm{x}},D_{x_i}v^{n,i}(t,\bm{x})\bigr), 
\\
\widetilde{b}^{n,i}(t,\bm{x}) &= \widehat{b}\bigl(x_i,m^n_{\bm{x}},D_xU(t,x_i,m^n_{\bm{x}})\bigr), 
\end{split}
\end{equation}
for $(t,\bm{x}) \in [0,T] \times (\R^d)^n$. 
Note first that \eqref{def:Nash-SDEsystem}, \eqref{eq:sec4:overlineX} and the Lipschitz assumption on $\widehat{b}$ from Assumption \ref{assumption:A}(1) yields:  
\begin{align*}
&|X^i_t-\overline{X}^i_t|^2 
\\
&\le T\int_0^t\left|\widehat{b}^{n,i}(s,\bm{X}_s) - \widetilde{b}^{n,i}(s,\bm{\overline{X}}_s)\right|^2ds \\
	&\le 2T\int_0^t\left|\widehat{b}^{n,i}(s,\bm{X}_s) - \widetilde{b}^{n,i}(s,\bm{X}_s)\right|^2ds + 2T\int_0^t\left|\widetilde{b}^{n,i}(s,\bm{X}_s) - \widetilde{b}^{n,i}(s,\bm{\overline{X}}_s)\right|^2ds \\
	&\le C\int_0^t\left|D_{x_i}v^{n,i}(s,\bm{X}_s) - D_xU(s,X^i_s,m^n_{\bm{X}_s})\right|^2ds + C\int_0^t\left(|X^i_s-\overline{X}^i_s|^2 + \W_2^2(m^n_{\bm{X}_s},m^n_{\bm{\overline{X}}_s})\right)ds.
\end{align*}
By Gronwall's inequality,
\begin{align}
|X^i_t-\overline{X}^i_t|^2 &\le C\int_0^t\left|D_{x_i}v^{n,i}(s,\bm{X}_s) - D_xU(s,X^i_s,m^n_{\bm{X}_s})\right|^2ds + C\int_0^t\W_2^2(m^n_{\bm{X}_s},m^n_{\bm{\overline{X}}_s})ds. \label{pf:gronwall1}
\end{align}
Noting that
\begin{align*}
\W_2^2(m^n_{\bm{X}_s},m^n_{\bm{\overline{X}}_s}) &\le \frac{1}{n}\sum_{i=1}^n|X^i_s-\overline{X}^i_s|^2,
\end{align*}
we may average \eqref{pf:gronwall1} over $i=1,\ldots,n$ and apply Gronwall's inequality once again to obtain 
\begin{align*}
\W_2^2(m^n_{\bm{X}_t},m^n_{\bm{\overline{X}}_t}) &\le \frac{C}{n}\sum_{i=1}^n\int_0^t\left|D_{x_i}v^{n,i}(s,\bm{X}_s) - D_xU(s,X^i_s,m^n_{\bm{X}_s})\right|^2ds.
\end{align*}
Using this bound in \eqref{pf:gronwall1} yields
\eqref{pf:mainestimates1}.
Also, \eqref{def:estimate-|X-Xbar|} will follow from \eqref{def:estimate-Dx}, which we now prove.

First, use It\^o's formula,  \eqref{exp-ham}, the fact that $v^{n,i}$ solves the Nash system, and \eqref{Y-Z} to obtain:
\begin{align}
dY^i_t = & \ \partial_tv^{n,i}(t,\bm{X}_t) + \sum_{j=1}^nD_{x_j}v^{n,i}(t,\bm{X}_t) \cdot \widehat{b}^{n,j}(t,\bm{X}_t)dt + \frac{1}{2}\sum_{j=1}^n\mathrm{Tr}\left[ D_{x_j,x_j}^2v^{n,i}(t,\bm{X}_t) \sigma\sigma^\top\right]dt \nonumber \\
	&+ \frac{1}{2}\sum_{j,k=1}^n\mathrm{Tr}\left[  D_{x_j,x_k}^2v^{n,i}(t,\bm{X}_t) \sigma_0\sigma_0^\top \right]dt + \sum_{j=1}^nZ^{i,j}_t	\cdot	\sigma dB^j_t + \sum_{j=1}^nZ^{i,j}_t	\cdot	\sigma_0 dW_t 	\nonumber
	\\
	= &- \widehat{f}\left(X^i_t,m^n_{\bm{X}_t},Z^{i,i}_t\right)dt + \sum_{j=1}^nZ^{i,j}_t\cdot \sigma dB^j_t + \sum_{j=1}^nZ^{i,j}_t \cdot \sigma_0 dW_t. \nonumber
\end{align}
On the other hand, use It\^{o}'s formula, \eqref{exp-ham},  Proposition \ref{pr:master-nearly-nash}, and \eqref{slantY-Z} to 
get:  
\begin{align*}
d\Y^i_t = \ &\partial_tu^{n,i}(t,\bm{X}_t) + \sum_{j=1}^nD_{x_j}u^{n,i}(t,\bm{X}_t) \cdot \widehat{b}\left(X^j_t,m^n_{\bm{X}_t},D_{x_j}v^{n,j}(t,X^j_t,m^n_{\bm{X}_t})\right)dt 
\\
	&+ \frac{1}{2}\sum_{j=1}^n\mathrm{Tr}\left[ D_{x_j,x_j}^2u^{n,i}(t,\bm{X}_t)\sigma\sigma^\top	\right]dt + \frac{1}{2}\sum_{j,k=1}^n\mathrm{Tr}\left[ D_{x_j,x_k}^2u^{n,i}(t,\bm{X}_t) \sigma_0\sigma_0^\top\right]dt \\
	&+ \sum_{j=1}^n\Z^{i,j}_t \cdot \sigma dB^j_t + \sum_{j=1}^n\Z^{i,j}_t \cdot\sigma_0 dW_t \\
		= \ &-\left[\widehat{f}\left(X^i_t,m^n_{\bm{X}_t},\Z^{i,i}_t\right) + r^{n,i}(t,\bm{X}_t)\right]dt + \sum_{j=1}^n\Z^{i,j}_t \cdot\sigma dB^j_t + \sum_{j=1}^n\Z^{i,j}_t \cdot \sigma_0 dW_t \\
		& + \sum_{j=1}^n\Z^{i,j}_t \cdot \left( \widehat{b}\left(X^j_t,m^n_{\bm{X}_t},Z^{j,j}_t\right) -  \widehat{b}\left(X^j_t,m^n_{\bm{X}_t},\Z^{j,j}_t\right)\right)dt.
\end{align*}  
Use It\^o's formula and the terminal conditions for the Nash system and the master equation to get, for $t \in [0,T]$,
\begin{align}
(Y^i_t-\Y^i_t)^2 = \ &2\int_t^T(Y^i_s-\Y^i_s)\left\{\vphantom{\sum_{j=1}^n}\widehat{f}\left(X^i_s,m^n_{\bm{X}_s},Z^{i,i}_s\right) - \widehat{f}\left(X^i_s,m^n_{\bm{X}_s},\Z^{i,i}_s\right) \right.
\nonumber
 \\
 &\left. +  \sum_{j=1}^n\Z^{i,j}_s \cdot \left( \widehat{b}\left(X^j_s,m^n_{\bm{X}_s},Z^{j,j}_s\right) -  \widehat{b}\left(X^j_s,m^n_{\bm{X}_s},\Z^{j,j}_s\right)\right) -  r^{n,i}(s,\bm{X}_s)\right\}ds 
 \nonumber
 \\
	&- \int_t^T\sum_{j=1}^n	\Bigl\vert \sigma^\top \bigl( Z^{i,j}_s-\Z^{i,j}_s \bigr) 
	\Bigr\vert^2 ds - \int_t^T\left|\sum_{j=1}^n
	\sigma_0^\top\bigl(Z^{i,j}_s-\Z^{i,j}_s\bigr)\right|^2ds 
	\nonumber
	\\
	&- 2\int_t^T(Y^i_s-\Y^i_s)\sum_{j=1}^n(Z^{i,j}_s-\Z^{i,j}_s)
	\cdot
	\left(\sigma dB^j_s + \sigma_0 dW_s\right). \label{eq:diff:Y:cY:f:lip:2}
\end{align}
Use the Lipschitz assumption on $\widehat{b}$ along with the estimates \eqref{pf:mainestimate-zbounds} to get:
\begin{align*}
\left|\sum_{j=1}^n\Z^{i,j}_s \cdot \left( \widehat{b}\left(X^j_s,m^n_{\bm{X}_s},Z^{j,j}_s\right) -  \widehat{b}\left(X^j_s,m^n_{\bm{X}_s},\Z^{j,j}_s)\right)\right)\right| & \le C\sum_{j=1}^n|\Z^{i,j}_s||Z^{j,j}_s -  \Z^{j,j}_s| \\
	& \le C|Z^{i,i}_s-\Z^{i,i}_s| + \frac{C}{n}\sum_{j=1,j \neq i}^n |Z^{j,j}_s -  \Z^{j,j}_s|.
\end{align*}
Substitute the last inequality back into \eqref{eq:diff:Y:cY:f:lip:2}, 
and use the Lipschitz assumption on $\widehat{f}$ from Assumption \ref{assumption:B'},  the elementary inequality $2ab \le \epsilon^{-1}a^2 + \epsilon b^2$,  and the estimate $|r^{n,i}| \le C/n$ from Proposition \ref{pr:master-nearly-nash} to obtain, for $\epsilon > 0$,  
\begin{align}
(Y^i_t-\Y^i_t)^2 \le \ & \frac{C}{n^2} + C\int_t^T\left(\epsilon^{-1}(Y^i_s-\Y^i_s)^2 + \epsilon|Z^{i,i}_s-\Z^{i,i}_s|^2 + \frac{\epsilon}{n}\sum_{j \neq i}|Z^{j,j}_s -  \Z^{j,j}_s|^2\right)ds \nonumber
\\
	&- \int_t^T\sum_{j=1}^n	\bigl\vert	\sigma^\top	\bigl(Z^{i,j}_s-\Z^{i,j}_s\bigr) \bigr\vert^2 ds - \int_t^T\left|\sum_{j=1}^n \sigma_{0}^\top	\bigl(Z^{i,j}_t-\Z^{i,j}_s\bigr) \right|^2ds	\nonumber	 \\
	&-  2 \int_t^T(Y^i_s-\Y^i_s)\sum_{j=1}^n(Z^{i,j}_s-\Z^{i,j}_s) \cdot \left(\sigma dB^j_s + \sigma_0 dW_s\right).
	\label{eq:diff:Y:cY:f:lip}
	\end{align}
Here $C$ does not depend on $\epsilon$. Noting that $\sigma$ is non-degenerate, we may choose $\epsilon$ small enough and average over $i=1,\ldots,n$ to get
\begin{align}
\frac{1}{n}&\sum_{i=1}^n\left((Y^i_t-\Y^i_t)^2 + \int_t^T\sum_{j=1}^n|Z^{i,j}_s-\Z^{i,j}_s|^2ds\right)  \label{pf:BSDEestimate}
\\
	&\le \frac{C}{n^2} + \frac{C}{n}\sum_{i=1}^n\int_t^T(Y^i_s-\Y^i_s)^2ds - \frac{\widetilde{C}}{n}\sum_{i=1}^n\int_t^T(Y^i_s-\Y^i_s)\sum_{j=1}^n(Z^{i,j}_s-\Z^{i,j}_s) \cdot \left(\sigma dB^j_t + \sigma_0 dW_t\right),
	\nonumber 
\end{align}
where the constant $\widetilde{C} < \infty$ depends only on $\sigma$.
It follows from Lemma \ref{le:YZfinite-moments} below that the stochastic integrals are martingales.
Take conditional expectations to get, for $r \le t$,
\begin{align*}
\frac{1}{n}\sum_{i=1}^n\E[(Y^i_t-\Y^i_t)^2|\F_r] &\le \frac{C}{n^2} + \frac{C}{n}\sum_{i=1}^n\int_t^T\E[(Y^i_s-\Y^i_s)^2|\F_r]dt.
\end{align*}
Using Gronwall's inequality and taking $r=t$ we conclude that:
\begin{align}
\frac{1}{n}\sum_{i=1}^n(Y^i_t-\Y^i_t)^2 &\le \frac{C}{n^2}.
\label{eq:y:cy:B:average}
\end{align}
Also, taking conditional 
expectations in \eqref{pf:BSDEestimate}, and using \eqref{eq:y:cy:B:average}, we find:
\begin{align}
\label{eq:diff:Y:cY:f:lip:3}
\frac{1}{n}\sum_{i=1}^n\sum_{j=1}^n\E\left[\int_t^T|Z^{i,j}_s-\Z^{i,j}_s|^2ds 
\big\vert 
\, \F_{t} \right] \le \frac{C}{n^2}.
\end{align}
Then, returning to 
\eqref{eq:diff:Y:cY:f:lip}, taking the conditional expectation given $\F_t$, and applying Gronwall's lemma yields
\begin{align}
\max_{i=1,\ldots,n} (Y^i_t-\Y^i_t)^2 &\le \frac{C}{n^2}.
\label{eq:y:cy:B}
\end{align} 
This implies that, for $M$ and $N$ defined in \eqref{def-M}-\eqref{def-N},  
\begin{align*}
\frac{1}{n}\sum_{i=1}^n[N^i]_T &= \frac{1}{n}\sum_{i=1}^n\int_0^T(Y^i_t-\Y^i_t)^2\,d[M^i]_t \le \frac{C}{n^3}\sum_{i=1}^n[M^i]_T,
\end{align*}
which proves the first part of \eqref{def:estimates-N,M}.  
On the other hand, applying  \eqref{eq:y:cy:B:average} to  \eqref{pf:BSDEestimate} shows that
\begin{align*}
\frac{1}{n}\sum_{i=1}^n(Y^i_t-\Y^i_t)^2 + \frac{1}{n}\sum_{i=1}^n([M^i]_T-[M^i]_t) &\le \frac{C}{n^2} + \frac{C}{n}\sum_{i=1}^n\int_t^T(Y^i_s-\Y^i_s)^2ds - \frac{\widetilde{C}}{n}\sum_{i=1}^n(N^i_T-N^i_t) \\
	&\le \frac{C}{n^2} - \frac{\widetilde{C}}{n}\sum_{i=1}^n(N^i_T-N^i_t).
\end{align*}
Take $t=0$ to derive the second part of \eqref{def:estimates-N,M}.

Finally,  using the definition \eqref{Y-Z} of $Z$ and recalling from \eqref{def:Zexpression} and \eqref{slantY-Z}  that $D_xU(t,X^i_t,m^n_{\bm{X}_t}) = \Z^{i,i}_t - \frac{1}{n}D_mU(t,X^i_t,m^n_{\bm{X}_t},X^i_t)$ and that $D_mU$ is bounded, we get:
\begin{align*}
\frac{1}{n}\sum_{i=1}^n \int_0^T\left|D_{x_i}v^{n,i}(t,\bm{X}_t) - D_xU(t,X^i_t,m^n_{\bm{X}_t})\right|^2dt
 &\le \frac{C}{n}\sum_{i=1}^n \int_0^T|Z^{i,i}_t-\Z^{i,i}_t|^2dt + \frac{C}{n^2},
\end{align*}
which is \eqref{eq:Dv:Du:Z:cZ}. Taking expectations and using 
\eqref{eq:diff:Y:cY:f:lip:3} with $t=0$, we get \eqref{def:estimate-Dx}.
\hfill\qedsymbol

{\ } \\

We close the section by proving standard integrability results that were used in the above proof to conclude that a certain local martingale was in fact a martingale.

\begin{lemma} \label{le:YZfinite-moments}
Suppose Assumption \ref{assumption:A} holds, as well as either Assumption \ref{assumption:B} or \ref{assumption:B'}. Then, for all $i,j$,
\begin{align*}
\E\left[\|Y^i\|_{{\infty}}^2 + \|Z^{i,j}\|_{{\infty}}^2\right] < \infty.
\end{align*}
\end{lemma}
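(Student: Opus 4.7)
The plan is to reduce everything to a fourth-moment estimate on the state vector $\bm{X}$ and then invoke the growth bounds coming from Assumption \ref{assumption:A}(4). Recall that $Y^i_t=v^{n,i}(t,\bm{X}_t)$ and $Z^{i,j}_t=D_{x_j}v^{n,i}(t,\bm{X}_t)$, so by Assumption \ref{assumption:A}(4) there exist constants $L_{n,i},L_{n,i,j}$ such that, pointwise,
\begin{equation*}
\|Y^i\|_{\infty}\le L_{n,i}\bigl(1+\|\bm{X}\|_\infty^2\bigr),\qquad \|Z^{i,j}\|_\infty\le L_{n,i,j}\bigl(1+\|\bm{X}\|_\infty\bigr).
\end{equation*}
Consequently, it suffices to show that $\E[\|\bm{X}\|_\infty^4]<\infty$, since squaring the two displayed bounds then yields the claim.

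To establish the fourth-moment bound, I would apply standard moment estimates to the closed-loop SDE \eqref{def:Nash-SDEsystem}. The drift $\widehat{b}(X^i_t,m^n_{\bm{X}_t},D_{x_i}v^{n,i}(t,\bm{X}_t))$ has at most linear growth in $\bm{x}$: Assumption \ref{assumption:A}(1) gives that $\widehat{b}$ is Lipschitz in $(x,m,y)$ (and hence of linear growth in each argument, taking a fixed basepoint), while Assumption \ref{assumption:A}(4) ensures that $\bm{x}\mapsto D_{x_i}v^{n,i}(t,\bm{x})$ is of linear growth in $\bm{x}$ uniformly in $t$, and finally $\bm{x}\mapsto m^n_{\bm{x}}$ is Lipschitz from $((\R^d)^n,|\cdot|)$ to $(\P^{p^*}(\R^d),\W_{p^*})$. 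Combining these with the boundedness of the diffusion coefficients $\sigma,\sigma_0$, the usual Burkholder-Davis-Gundy and Gr\"onwall argument gives
\begin{equation*}
\E\Bigl[\sup_{t\in[0,T]}|\bm{X}_t|^4\Bigr]\le C_n\bigl(1+\E[|\bm{X}_0|^4]\bigr),
\end{equation*}
for a constant $C_n$ depending on $n$, $T$, the Lipschitz constants, and $L_{n,i,j}$.

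The right-hand side is finite because Assumption \ref{assumption:A}(3) guarantees that the i.i.d.\ initial states have a moment of order $p'>4$, so $\E[|\bm{X}_0|^4]=n\,\E[|X^1_0|^4]<\infty$. Feeding this back into the pointwise bounds for $\|Y^i\|_\infty$ and $\|Z^{i,j}\|_\infty$ and taking expectations yields the lemma. Note that the argument requires no distinction between Assumptions \ref{assumption:B} and \ref{assumption:B'}: the growth estimates in \ref{assumption:A}(4) suffice in both cases (and under \ref{assumption:B'} the bound on $\|Y^i\|_\infty$ is in fact automatic from the uniform boundedness of the $v^{n,i}$). The only real step is the fourth-moment estimate on $\bm{X}$, which is routine once one carefully tracks that the drift grows at most linearly in the full vector $\bm{x}$; this is the mild obstacle, but it is standard given the hypotheses already in force.
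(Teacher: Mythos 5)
Your proposal is correct and follows essentially the same route as the paper: reduce to $\E[\|\bm{X}\|_\infty^4]<\infty$ via the quadratic and linear growth bounds of Assumption \ref{assumption:A}(4), obtain that fourth moment from the linear growth of the drift together with Gr\"onwall and the $p'>4$ moment condition on the initial data. The only (immaterial) difference is that you invoke Burkholder--Davis--Gundy where the paper bounds the constant-coefficient noise terms pathwise by $|\sigma|\|B^i\|_\infty+|\sigma_0|\|W\|_\infty$; both work since the diffusion coefficients are constant.
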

\begin{proof}
By the SDE \eqref{intro:dynamics} for $X^i$, we have:
\begin{align*}
|X^i_t| &\le |X^i_0| + \int_0^t|D_{x_i}v^{n,i}(t,\bm{X}_s)|ds + |\sigma||B^i_t| + |\sigma_0||W_t| \\
	&\le |X^i_0| + L_{n,i,i}\left(T + \int_0^t|\bm{X}_s|ds\right) + |\sigma||B^i_t| + |\sigma_0||W_t|, 
\end{align*}
where $L_{n,i,i}$ is the constant from Assumption \ref{assumption:A}(4).
This holds for all $i=1,\ldots,n$, and Gronwall's inequality yields:
\begin{align*}
\|\bm{X}\|_{{\infty}} \le C\left(1 + \sum_{i=1}^n|X^i_0| + \sum_{i=1}^n\|B^i\|_{{\infty}} + \|W\|_{{\infty}}\right),
\end{align*}
where we note that the constant $C <  \infty$ may depend on $n$ here.
Recalling from Assumption \ref{assumption:A}(3) that $X^i_0$ are i.i.d.\ with finite moments of order $p' > 4$, we conclude that $\E[\|\bm{X}\|_{{\infty}}^4] < \infty$. Using the definitions of $Y^i$ and $Z^{i,j}$ from \eqref{Y-Z} along with assumption \ref{assumption:A}(4), we find
\begin{align*}
|Y^i_t|^2 = |v^{n,i}(t,\bm{X}_t)|^2 \le 2L_{n,i}^2 (1+|\bm{X}_t|^4 ), \quad\quad\quad |Z^{i,j}_t|^2 \le2 L_{n,i,j}^2 (1+|\bm{X}_t|^2 ).
\end{align*}
\end{proof}

\subsection{An alternative estimate}

Under Assumption \ref{assumption:B'}, instead of Assumption \ref{assumption:B}, we have somewhat different estimates, based on an exponential transformation.

\begin{theorem} \label{th:mainestimate2}
Suppose Assumptions \ref{assumption:A} and \ref{assumption:B'} hold.
Then, \eqref{pf:mainestimates1} and \eqref{eq:Dv:Du:Z:cZ} hold, and, for sufficiently large $n$, the estimates \eqref{def:estimate-Dx} and \eqref{def:estimate-|X-Xbar|} hold.
For $i=1,\ldots,n$ and a constant $\eta > 0$, define $M^i$ as in
\eqref{def-M}  and $Q^i$ by 
\begin{align*}
Q^i_t &= \int_0^t\left[2(Y^i_s-\Y^i_s) + \eta\sinh(\eta(Y^i_s-\Y^i_s))\right]dM^i_s.
\end{align*}
Then, for sufficiently large $n$ and $\eta$, we have for all $t \in [0,T]$, 
\begin{align}
\frac{1}{n}\sum_{i=1}^n[Q^i]_t &\le \frac{C}{n^3}\sum_{i=1}^n[M^i]_t, \quad \text{ and } \quad \frac{1}{n}\sum_{i=1}^n[M^i]_T \le \frac{C}{n^2} + \frac{C}{n}\sum_{i=1}^n|Q^i_T|. \label{def:estimates-Q,M}
\end{align}
\end{theorem}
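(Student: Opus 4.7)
The approach parallels the proof of Theorem \ref{th:mainestimate} but replaces the Lipschitz absorption of the $\widehat f$-difference with a Kobylanski-style exponential transformation that neutralizes the quadratic growth of $\widehat f$ in $y$. First, observe that the pointwise estimates \eqref{pf:mainestimates1} and \eqref{eq:Dv:Du:Z:cZ} do not involve $\widehat f$ at all: they rely only on the Lipschitz property of $\widehat b$ from Assumption \ref{assumption:A}(1), Gronwall's inequality applied to \eqref{def:Nash-SDEsystem} and \eqref{eq:sec4:overlineX}, the decomposition \eqref{def:Zexpression}, and the boundedness of $D_m U$ from Assumption \ref{assumption:A}(5). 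Hence those two almost-sure inequalities carry over verbatim from the proof of Theorem \ref{th:mainestimate}.

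The key new input is that Assumptions \ref{assumption:B'}(1) and \ref{assumption:B'}(2) give $\sup_{n,i,t}|Y^i_t - \Y^i_t| \le K$ almost surely with some $K$ independent of $n$. Fix $\phi(y) = y^2 + \cosh(\eta y)$; its derivative $\phi'(y) = 2y + \eta \sinh(\eta y)$ is precisely the weight appearing in the definition of $Q^i$, and $\phi''(y) = 2 + \eta^2 \cosh(\eta y)$. Applying It\^o's formula to $\phi(Y^i_t - \Y^i_t)$ and following the computation that led to \eqref{eq:diff:Y:cY:f:lip:2} produces four types of terms on the right-hand side: (i) the remainder $\phi'(Y^i - \Y^i)\,r^{n,i}$, of size $C/n$ by Proposition \ref{pr:master-nearly-nash}; (ii) the cross term involving the $\widehat b$-differences, controlled by \eqref{pf:mainestimate-zbounds} and Young exactly as before; (iii) the critical term $\phi'(Y^i - \Y^i)\bigl(\widehat f(\cdots,Z^{i,i}) - \widehat f(\cdots,\Z^{i,i})\bigr)$, bounded under Assumption \ref{assumption:B'}(3) by $C|\phi'(Y^i - \Y^i)|\bigl(1 + |Z^{i,i}| + |\Z^{i,i}|\bigr)|Z^{i,i} - \Z^{i,i}|$; and (iv) the quadratic-variation contribution $\tfrac12 \phi''(Y^i - \Y^i)\sum_j|\sigma^\top(Z^{i,j} - \Z^{i,j})|^2$ together with its common-noise analogue.

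The main obstacle is term (iii). Since $|\Z^{i,i}| \le C$ by Assumption \ref{assumption:A}(5) and \eqref{def:Zexpression}, we split $1 + |Z^{i,i}| + |\Z^{i,i}| \le C + |Z^{i,i} - \Z^{i,i}|$. The linear-in-$|Z-\Z|$ part is absorbed by Young's inequality into a fraction of (iv) plus a multiple of $|\phi'(Y^i - \Y^i)|^2 \le C_\eta |Y^i - \Y^i|^2$, exactly as in Theorem \ref{th:mainestimate}. The genuinely quadratic remainder $C|\phi'(Y^i - \Y^i)||Z^{i,i} - \Z^{i,i}|^2$ is absorbed into $\tfrac14 \phi''(Y^i - \Y^i)|Z^{i,i} - \Z^{i,i}|^2$, which is possible precisely because, for $\eta$ sufficiently large depending only on $K$ and on the constants in Assumption \ref{assumption:B'}, one has $4C|\phi'(y)| \le \phi''(y)$ uniformly on $[-K,K]$ (the ratio $|\phi'|/\phi''$ is $O(1/\eta)$ on this compact range). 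The remaining fraction of (iv) still dominates $|Z^{i,j} - \Z^{i,j}|^2$ because $\phi'' \ge 2$ and $\sigma$ is nondegenerate.

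Once this absorption is in place, averaging over $i$ and taking conditional expectations as in \eqref{pf:BSDEestimate} yields a Gronwall inequality for $\frac1n\sum_i \phi(Y^i_t - \Y^i_t)$; since $\phi(y) \ge y^2$, this gives $\frac1n\sum_i (Y^i_t - \Y^i_t)^2 \le C/n^2$ and then $\frac1n\sum_{i,j}\E\int_0^T |Z^{i,j}_s - \Z^{i,j}_s|^2\,ds \le C/n^2$ for $n$ large enough that the $O(1/n)$ remainder dominates the leftover error from absorption; this delivers \eqref{def:estimate-Dx} and \eqref{def:estimate-|X-Xbar|}. The conditional-expectation argument from the proof of Theorem \ref{th:mainestimate} further yields the pointwise bound $\max_i|Y^i_t - \Y^i_t| \le C/n$, so $|\phi'(Y^i_t - \Y^i_t)| \le C_\eta/n$ on $[-K,K]$. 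From this, $[Q^i]_t = \int_0^t|\phi'(Y^i_s - \Y^i_s)|^2\,d[M^i]_s \le (C/n^2)[M^i]_t$, giving the first bound in \eqref{def:estimates-Q,M}, while the second follows by rearranging the It\^o expansion of $\phi(Y^i - \Y^i)$ at $t=0$ exactly as at the end of the proof of Theorem \ref{th:mainestimate}.
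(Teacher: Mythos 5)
Your proof is correct and follows essentially the same route as the paper: the paper expands $(Y^i-\Y^i)^2$ and $\cosh(\eta(Y^i-\Y^i))$ separately and then sums them, which is the same computation as your single It\^o application to $\phi(y)=y^2+\cosh(\eta y)$, and your absorption condition $4C|\phi'(y)|\le\phi''(y)$ for large $\eta$ on the compact range is precisely the paper's inequality $C(1+\eta\cosh(\eta x))\le\tfrac{\eta^2}{4}\cosh(\eta x)$. The remaining steps (Gronwall after conditional expectations, the pointwise bound $\max_i|Y^i_t-\Y^i_t|\le C/n$ giving the first half of \eqref{def:estimates-Q,M}, and rearrangement of the averaged inequality at $t=0$ for the second half) match the paper's Steps 3--4.
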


\begin{proof}
{The proofs of 
\eqref{pf:mainestimates1}
and
\eqref{eq:Dv:Du:Z:cZ}
are the same as in 
Theorem \ref{th:mainestimate}. In particular,} \eqref{def:estimate-|X-Xbar|} follows from \eqref{def:estimate-Dx}.
The proof of \eqref{def:estimate-Dx} given below is adapted from \cite[Theorem 6.32]{CarmonaDelarue_book_II}.

{\ }

\noindent \textbf{Step 1:}
We first estimate $(Y^i_s-\Y^i_s)^2$, for $s \in [0,T]$.
Our starting point is the equation \eqref{eq:diff:Y:cY:f:lip:2} from the proof of Theorem \ref{th:mainestimate}.
The local Lipschitz assumption on $\widehat{f}$ along with the uniform boundedness of $v^{n,i}$, $u^{n,i}$, and $\Z^{i,i}_t = D_{x_i}u^{n,i}(t,\bm{X}_t)$ implied by  Assumption \ref{assumption:B'} shows that
\begin{align*}
&\left|(Y^i_r-\Y^i_r)\left(\widehat{f}\left(X^i_r,m^n_{\bm{X}_r},Z^{i,i}_r\right) - \widehat{f}\left(X^i_r,m^n_{\bm{X}_r},\Z^{i,i}_r\right)\right)\right| \\
	&\quad \le C\left(1 + |Z^{i,i}_r| + |\Z^{i,i}_r|\right)|Y^i_r-\Y^i_r||Z^{i,i}_r-\Z^{i,i}_r| \\
	&\quad \le C\left(|Y^i_r-\Y^i_r||Z^{i,i}_r-\Z^{i,i}_r| + |Z^{i,i}_r -\Z^{i,i}_r|^2\right) \\
	&\quad \le C\left(|Y^i_r-\Y^i_r|^2 + |Z^{i,i}_r-\Z^{i,i}_r|^2\right).
\end{align*}
Substitute this back into equation \eqref{eq:diff:Y:cY:f:lip:2}, using the Lipschitz continuity of $\widehat{b}$,  Proposition \ref{pr:master-nearly-nash}, the estimate \eqref{pf:mainestimate-zbounds}, and the elementary inequality $2 ab \leq a^2 + b^2$ to get
\begin{align*}
(Y^i_s-\Y^i_s)^2 \le \ &\frac{C}{n^2} + C\int_s^T\left(|Y^i_r-\Y^i_r|^2 + |Z^{i,i}_r-\Z^{i,i}_r|^2 + \frac{1}{n}\sum_{j=1}^n|Z^{j,j}_r-\Z^{j,j}_r|^2\right)dr \\
	&- \int_s^T\sum_{j=1}^n	\Bigl\vert \sigma^\top	\bigl( 	Z^{i,j}_r-\Z^{i,j}_r\bigr) \Bigr|^2 dr - \int_s^T\left|\sum_{j=1}^n	\sigma_{0}^\top	\bigl(Z^{i,j}_r-\Z^{i,j}_r\bigr)\right|^2dr \\
	&- 2\int_s^T(Y^i_r-\Y^i_r)\sum_{j=1}^n(Z^{i,j}_r-\Z^{i,j}_r) \cdot\left(\sigma dB^j_r + \sigma_0 dW_r\right).
\end{align*}

\noindent \textbf{Step 2:}
The $|Z^{i,i}_r-\Z^{i,i}_r|^2$ term poses some problems, as it is not canceled by the quadratic variation term. To deal with this, our next step is to estimate $\cosh(\eta(Y^i_s-\Y^i_s))$ for a fixed $\eta > 0$, to be chosen later. We note that the constant $C < \infty$ will change from line to line but will not depend on the value of $\eta$.
Use It\^o's formula to get:
\begin{align*}
&
d\cosh(\eta(Y^i_s-\Y^i_s)) \nonumber
\\
&= \  \eta\sinh(\eta(Y^i_s-\Y^i_s))\left[-\sum_{j=1}^n\Z^{i,j}_s \cdot \left( \widehat{b}\left(X^j_s,m^n_{\bm{X}_t},Z^{j,j}_s\right) -  \widehat{b}\left(X^j_s,m^n_{\bm{X}_s},\Z^{j,j}_s\right)\right)  \right. \\
	&\quad\quad\quad\quad\quad \left. \vphantom{\sum_{j=1}^n}+  r^{n,i}(s,\bm{X}_s) - \widehat{f}\left(X^i_s,m^n_{\bm{X}_s},Z^{i,i}_s\right) + \widehat{f}\left(X^i_s,m^n_{\bm{X}_s},\Z^{i,i}_s\right)\right]ds \\
	&\quad + \frac{\eta^2}{2}\cosh(\eta(Y^i_s-\Y^i_s))\left(\sum_{j=1}^n
	\Bigl|\sigma^\top\bigl(Z^{i,j}_s-\Z^{i,j}_s\bigr) \Bigr|^2 + \left|\sum_{j=1}^n
	\sigma_{0}^\top	\bigl(Z^{i,j}_s-\Z^{i,j}_s\bigr)\right|^2\right)ds \\
	&\quad + \eta\sinh(\eta(Y^i_s-\Y^i_s))\left(\sum_{j=1}^n(Z^{i,j}_s-\Z^{i,j}_s)	\cdot	\sigma dB^j_s + \sum_{j=1}^n(Z^{i,j}_s-\Z^{i,j}_s) \cdot \sigma_0 dW_s\right).
\end{align*}
As before, use the fact from \eqref{pf:mainestimate-zbounds} that $\Z^{i,i}_r$ is uniformly bounded  and the local Lipschitz continuity of $\widehat{f}$ to get:
\begin{align*}
&\left|\eta\sinh(\eta(Y^i_r-\Y^i_r))\left(\widehat{f}\left(X^i_r,m^n_{\bm{X}_r},Z^{i,i}_r\right) - \widehat{f}\left(X^i_r,m^n_{\bm{X}_r},\Z^{i,i}_r\right)\right)\right| \\
	&\quad \le C\left(1 + |Z^{i,i}_r| + |\Z^{i,i}_r|\right)\left|\eta\sinh(\eta(Y^i_r-\Y^i_r))\right||Z^{i,i}_r-\Z^{i,i}_r| \\
	&\quad \le C\left|\eta\sinh(\eta(Y^i_r-\Y^i_r))\right||Z^{i,i}_r-\Z^{i,i}_r| + C\left|\eta\sinh(\eta(Y^i_r-\Y^i_r))\right||Z^{i,i}_r-\Z^{i,i}_r|^2 \\
	&\quad \le C\left|\eta\sinh(\eta(Y^i_r-\Y^i_r))\right|^2 + C\left(1 + \left|\eta\sinh(\eta(Y^i_r-\Y^i_r))\right|\right)|Z^{i,i}_r-\Z^{i,i}_r|^2 \\
	&\quad \le C\left|\eta\sinh(\eta(Y^i_r-\Y^i_r))\right|^2 + C\left(1 + \eta\cosh(\eta(Y^i_r-\Y^i_r))\right)|Z^{i,i}_r-\Z^{i,i}_r|^2,
\end{align*}
where the last line follows from the elementary inequality $|\sinh(x)| \le \cosh(x)$. Thus, use the Lipschitz continuity of $\widehat{b}$, the inequality $2ab \leq a^2 + b^2$, and \eqref{pf:mainestimate-zbounds} to obtain  
\begin{align*}
&\cosh(\eta(Y^i_s-\Y^i_s)) 
\\
&\le \ C\int_s^T\left[\vphantom{\sum_{j=1}^n}\eta^2\sinh^2(\eta(Y^i_r-\Y^i_r)) + (1 + \eta\cosh(\eta(Y^i_r-\Y^i_r)))|Z^{i,i}_s-\Z^{i,i}_r|^2 \right. \\
	&\quad\quad\quad\quad\quad \left. + \frac{\eta}{n}|\sinh(\eta(Y^i_r-\Y^i_r))|\sum_{j=1}^n|Z^{j,j}_r-\Z^{j,j}_r| + |r^{n,i}(r,\bm{X}_r)|^2\right]dr  \\
		&- \frac{\eta^2}{2}\int_s^T\cosh(\eta(Y^i_r-\Y^i_r))\left(\sum_{j=1}^n
		\Bigl\vert\sigma^\top \bigl( Z^{i,j}_r-\Z^{i,j}_r\bigr) \Bigr\vert^2 + \left|\sum_{j=1}^n \sigma_{0}^\top (Z^{i,j}_r-\Z^{i,j}_r)\right|^2\right)dr  
		\\
	&- \eta\int_s^T\sinh(\eta(Y^i_r-\Y^i_r))\sum_{j=1}^n(Z^{i,j}_r-\Z^{i,j}_r) \cdot (\sigma dB^j_r + \sigma_0 dW_r).
\end{align*}
Finally, since $\sigma$ is non-degenerate,  use the estimate $||r^{n,i}||_\infty \leq C/n$ from Proposition \ref{pr:master-nearly-nash}  
to conclude that 
\begin{align*}
\frac{\eta^2}{2}\int_s^T&\cosh(\eta(Y^i_r-\Y^i_r))\sum_{j=1}^n|Z^{i,j}_r-\Z^{i,j}_r|^2 dr + \cosh(\eta(Y^i_s-\Y^i_s))  \\
	\le \ &\frac{C}{n^2} + C\int_s^T\left[\vphantom{\sum_{j=1}^n}\eta^2\sinh^2(\eta(Y^i_r-\Y^i_r)) + (1 + \eta\cosh(\eta(Y^i_r-\Y^i_r)))|Z^{i,i}_r-\Z^{i,i}_r|^2 \right. dr 
	\\
	&\quad\quad\quad\quad\quad \left. + \frac{\eta}{n}|\sinh(\eta(Y^i_r-\Y^i_r))|\sum_{j=1}^n|Z^{j,j}_r-\Z^{j,j}_r|\right]dr 
	\\
	&- \widetilde{C}\eta\int_s^T\sinh(\eta(Y^i_r-\Y^i_r))\sum_{j=1}^n(Z^{i,j}_r-\Z^{i,j}_r) \cdot (\sigma dB^j_r + \sigma_0 dW_r).
\end{align*}
where the constant $\widetilde{C} < \infty$ depends only on $\sigma$.

\noindent \textbf{Step 3:} We now combine the previous estimates. Use the facts that $\sinh$ is locally Lipshitz and $Y^i_s$ and $\Y^i_s$ are uniformly bounded to find a constant $c_\eta <  \infty$ such that:
\begin{align*}
(Y^i_s-\Y^i_s)^2 &+ \frac{\eta^2}{2}\int_s^T\cosh(\eta(Y^i_r-\Y^i_r))\sum_{j=1}^n|Z^{i,j}_r-\Z^{i,j}_r|^2 dr \\
	\le \ & \frac{C}{n^2} + c_\eta\int_s^T|Y^i_r-\Y^i_r|^2dr +  \frac{C (1+\eta)}{n}\int_s^T\sum_{j=1}^n|Z^{j,j}_r-\Z^{j,j}_r|^2dr  \\
	&- \int_s^T\sum_{j=1}^n
	\Bigl| \sigma^\top \bigl( Z^{i,j}_r-\Z^{i,j}_r \bigr) \Bigr|^2dr  - \int_s^T\left|\sum_{j=1}^n\sigma_0^\top \bigl(Z^{i,j}_r-\Z^{i,j}_r\bigr)\right|^2dr 
	\\
	& + C\int_s^T(1 + \eta\cosh(\eta(Y^i_r-\Y^i_r)))|Z^{i,i}_r-\Z^{i,i}_r|^2dr  
	\\
	&- \int_s^T\left[2(Y^i_r-\Y^i_r) + \widetilde{C}\eta\sinh(\eta(Y^i_r-\Y^i_r))\right]\sum_{j=1}^n(Z^{i,j}_r-\Z^{i,j}_r) \cdot \left(\sigma dB^j_r + \sigma_0 dW_r\right).  
\end{align*}
Now, for sufficiently large $\eta$, we have:
\[
C(1+\eta\cosh(\eta x)) \le 2C\eta\cosh(\eta x) \le \frac{\eta^2}{4}\cosh(\eta x),
\]
for any $x \in \R$. Hence, for large $\eta$, we find 
\begin{align}
(Y^i_s-\Y^i_s)^2 &+ \frac{\eta^2}{4}\int_s^T\cosh(\eta(Y^i_r-\Y^i_r))\sum_{j=1}^n|Z^{i,j}_r-\Z^{i,j}_r|^2 dr  \nonumber \\
	\le \ & \frac{C}{n^2} +  c_\eta\int_s^T|Y^i_r-\Y^i_r|^2dr +   \frac{C\eta}{n} \int_s^T\sum_{j=1}^n|Z^{j,j}_r-\Z^{j,j}_r|^2dr  
	\nonumber
	\\
	&- \int_s^T\sum_{j=1}^n
	\Bigl| \sigma^\top \bigl( Z^{i,j}_r-\Z^{i,j}_r\bigr) \Bigr|^2 dr - \int_s^T\left|\sum_{j=1}^n
	\sigma_0^\top
	\bigl(Z^{i,j}_r-\Z^{i,j}_r\bigr)\right|^2dr \nonumber
	\\
	&- \int_s^T\left[2(Y^i_r-\Y^i_r) + \widetilde{C}\eta\sinh(\eta(Y^i_r-\Y^i_r))\right]\sum_{j=1}^n(Z^{i,j}_r-\Z^{i,j}_r) \cdot \left(\sigma dB^j_r + \sigma_0 dW_r\right).
	\label{eq:assumption:C:y:cy}
\end{align}
Averaging \eqref{eq:assumption:C:y:cy} over $i=1,\ldots,n$ and increasing the value of $\eta$  if needed, we obtain  
\begin{align}
\frac{1}{n}&\sum_{i=1}^n(Y^i_s-\Y^i_s)^2 + \frac{\eta^2}{2n}\sum_{i=1}^n\int_s^T\cosh(\eta(Y^i_r-\Y^i_r))\sum_{j=1}^n|Z^{i,j}_r-\Z^{i,j}_r|^2 dr \nonumber \\
	&\quad + \frac{1}{n}\sum_{i=1}^n\sum_{j=1}^n\int_s^T
	\Bigl| \sigma^\top \bigl( Z^{i,j}_r-\Z^{i,j}_r\bigr) \Bigr|^2 dr + \frac{1}{n}\sum_{i=1}^n\int_s^T\left|\sum_{j=1}^n
	\sigma_{0}^\top \bigl(Z^{i,j}_r-\Z^{i,j}_r\bigr)\right|^2 dr \nonumber \\
	&\le \frac{C}{n^2} + \frac{c_\eta}{n}\sum_{i=1}^n\int_s^T|Y^i_r-\Y^i_r|^2dr  \label{pf:bigestimate1} \\
	&\quad - \frac{1}{n}\sum_{i=1}^n\int_s^T\left[2(Y^i_r-\Y^i_r) + \widetilde{C}\eta\sinh(\eta(Y^i_r-\Y^i_r))\right]\sum_{j=1}^n(Z^{i,j}_r-\Z^{i,j}_r) \cdot \left(\sigma dB^j_r + \sigma_0 dW_r\right). \nonumber
\end{align}
Noting that the stochastic integral is a martingale thanks to Lemma \ref{le:YZfinite-moments} and the boundedness of $Y^i$ and $\Y^i$, we take conditional expectations and apply Gronwall's inequality as in \eqref{eq:y:cy:B:average} to find
\begin{align}
\frac{1}{n}\sum_{i=1}^n(Y^i_s-\Y^i_s)^2 \le \frac{C}{n^2}. \label{pf:Ybound1}
\end{align}
Moreover, substituting \eqref{pf:Ybound1} back into \eqref{pf:bigestimate1}, using non-degeneracy of $\sigma$, and taking conditional expectations, we see that 
\begin{align}
\frac{1}{n}\sum_{i=1}^n\sum_{j=1}^n\E\left[\int_s^T|Z^{i,j}_r-\Z^{i,j}_r|^2dr 
\big\vert \F_{s}
\right] \le \frac{C}{n^2}.
\label{eq:z:cz:C}
\end{align}
Returning to \eqref{eq:assumption:C:y:cy} and following the proof of \eqref{eq:y:cy:B}, we obtain:
\begin{align}
\max_{i=1,\ldots,n} (Y^i_s-\Y^i_s)^2 &\le \frac{C}{n^2}. \label{eq:y:cy:C}
\end{align}
Recall now that $\sinh$ is locally Lipschitz and $Y^i$ and $\Y^i$ are uniformly bounded. Using \eqref{eq:y:cy:C} we conclude that 
\[
[Q^i]_t \le \frac{C}{n^2}[M^i]_t,
\]
for all $i \in \{1,\ldots,n\}$ and $t \in [0,T]$. On the other hand, from \eqref{pf:bigestimate1} and \eqref{eq:y:cy:C} we get:
\[
\frac{1}{n}\sum_{i=1}^n[M^i]_T \le \frac{C}{n^2} + \frac{C}{n}\sum_{i=1}^n|Q^i_T|.
\]
This completes the proof of \eqref{def:estimates-Q,M}.

{\ }

\noindent \textbf{Step 4:} Finally, to prove \eqref{def:estimate-Dx}, it suffices to take expectations in \eqref{eq:Dv:Du:Z:cZ} and to use \eqref{eq:z:cz:C}.
\end{proof}

\subsection{Law of large numbers} \label{se:LLN}
As in \cite{cardaliaguet-delarue-lasry-lions}, the estimates of Theorems \ref{th:mainestimate} and \ref{th:mainestimate2} imply a law of large numbers for the equilibrium empirical measure $m^n_{\bm{X}}$. That is, we can now prove Theorem \ref{th:LLN}:

\begin{proof}[Proof of Theorem \ref{th:LLN}]
Note first that \eqref{def:estimate-|X-Xbar|} implies
\begin{align}
\E\left[\W_{2,\C^d}^{2}(m^n_{\bm{X}},m^n_{\bm{\overline{X}}})\right] \le \frac{C}{n^2}. \label{pf:LLN1}
\end{align}
On the other hand, a standard McKean-Vlasov limit theorem (e.g., \cite[Section 7]{carmona2016probabilistic}
or 
\cite[Chapter 2, Section 2.1]{CarmonaDelarue_book_II}) implies 
\[
\lim_{n\rightarrow\infty}\E\left[\W_{2,\C^d}^{2}(m^n_{\bm{\overline{X}}},\mu)\right] = 0.
\]
This proves the theorem.  
\end{proof}

\section{Proof of the central limit theorem} \label{se:CLT-proofs}

As we already mentioned in the  Introduction, the proof of the CLT is split into two parts. The first part is to control the distance between the two particle systems ${\boldsymbol X}$
and $\overline{\boldsymbol X}$, which is in fact the purpose of Theorem 
\ref{th:mainestimate}. The second part consists of deriving a CLT for the flow of empirical distributions  $(m^n_{\overline{\boldsymbol X}_{t}})_{0 \le t \le T}$. The first part is addressed in Section \ref{se:MFGfluctuationproof}, after we have developed the requisite machinery, and we begin with the second part. 

Generally speaking, the proof of the CLT goes along the lines of earlier works on the subject, see for instance \cite{hitsuda-mitoma}, \cite{meleard}, \cite{jourdain-meleard} and \cite{kurtz-xiong}. However, among these four references, only the last one allows for a common noise in the particle system or a nonlinear mean field term. 
In the approach we implement below, we follow the main steps of \cite{kurtz-xiong}, except that the space of distributions in which the CLT is proven is taken from \cite{meleard,jourdain-meleard}. 

A crucial point throughout the proof is that, under the assumption of Theorem \ref{th:CLT:sec:3} and for the same $p'$ as therein, one has 
${\mathbb E}[ \sup_{t \in [0,T]} \int_{\R^d}
\vert x \vert^{p'} d \mu_{t}(x)]< \infty$
and 
$\sup_{1 \leq i \leq N}{\mathbb E}[ \sup_{t \in [0,T]} \vert \overline X_{t}^i \vert^{p'}]< \infty$,
with $\overline{\boldsymbol X}$ as in 
\eqref{eq:sec4:overlineX}.

\subsection{Tightness}
The first step for proving a CLT for 
$\overline{\boldsymbol X}$ is to show that the difference $(\overline S^n_{t}=\sqrt{n}(m^n_{\overline{\boldsymbol X}_{t}}- \mu_{t}))_{t \in [0,T]}$ is tight in a suitable space of distributions. This is precisely the point where we prefer to follow \cite{meleard,jourdain-meleard} instead of \cite{hitsuda-mitoma,kurtz-xiong}. There are several reasons for that. First, the  framework used in \cite{meleard,jourdain-meleard} allows for weaker integrability conditions of the initial distribution. Second, the analysis therein is explicitly fitted to the higher-dimensional setting, whilst the one performed in \cite{hitsuda-mitoma,kurtz-xiong} is in dimension 1 only; an additional effort would be needed to make the latter  adapted to the case $d \geq 2$, since, as we shall see below, the dimension plays a crucial role in the definition of the space of distributions used in the statement of the CLT. In this regard, it seems that the space we use below permits to track the dependence on the dimension in a more explicit way.

Here, we use the same space of distributions as in 
\cite{meleard,jourdain-meleard}, as defined in \eqref{eq:cH:j,alpha}. 
Roughly speaking, we shall prove {under the assumptions of Theorem \ref{th:CLT:sec:3}} that the sequence $(\overline S^n_{t})_{t \in [0,T]}$ is tight in the space $C([0,T],{\mathcal H}^{-(2+2\newD),2\newD})$, where $\newD = \lfloor d/2 \rfloor + 1$. See Theorem \ref{thm:clt:tightness} below for a more precise statement.

We only provide the main steps of the proof, as it is quite similar to the aforementioned references. There are two key ingredients,  described in the following two subsections. 

\subsubsection{Coupling with the limiting process.} The first step to get tightness is to prove that the particle system $\overline{\boldsymbol X}$ is at most at distance $n^{-1/2}$ of the auxiliary particle system $\widehat{\boldsymbol X}$ defined as the solution to the following SDE system:
\begin{equation}
\label{eq:hat:coupling}
d \widehat{X}_{t}^i = \widehat{b}\bigl( \widehat{X}_{t}^i,\mu_{t}, D_x  U(t,\widehat X_{t}^i,\mu_{t}) \bigr) dt 
+ \sigma dB_{t}^i + \sigma_{0} dW_{t}, \quad 
\widehat{X}_{0}^i = \overline{X}_{0}^i, 
\end{equation}
for $i \in \{1,\ldots,n\}$. 
It is customary to introduce $\widehat{\boldsymbol X}$ in the analysis of mean field systems, see for instance \cite{sznitman1991topics}. A key feature is that, conditional on $W$, the particles $\widehat{X}^1,\ldots,\widehat{X}^n$ are independent. 
Below, we use the shortened notation $\widetilde{b}(t,x,m) = \widehat{b}(x,m,D_x  U(t,x,m))$, 
{which is similar to \eqref{widehat:widetilde:b:n:i}}.
We have the following bound, analogous to \cite[Theorem 2.4]{kurtz-xiong}.
 
\begin{lemma}
\label{lem:clt:n1/2}
Under the assumptions of Theorem 
\ref{th:CLT:sec:3}, we can find a constant $C$ such that, for any $n \geq 1$ and any $i \in \{1,\ldots,n\}$,
\begin{equation*}
\E\left[\sup_{t \in [0,T]}\bigl\vert \overline{X}_{t}^i - \widehat{X}_{t}^i \bigr\vert^{4}
\right]^{1/4} \leq C n^{-1/2},
\end{equation*}
\end{lemma}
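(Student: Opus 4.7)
Write $\Delta_t^i := \overline X_t^i - \widehat X_t^i$ and $\widetilde b(t,x,m) := \widehat b(x,m,D_xU(t,x,m))$. The strategy is to reduce the lemma via Gronwall to a pointwise-in-time $L^4$ bound on $\widetilde b(s,\widehat X_s^1,m^n_{\widehat{\boldsymbol X}_s}) - \widetilde b(s,\widehat X_s^1,\mu_s)$, and then exploit the conditional-i.i.d.\ structure of $(\widehat X_s^j)_j$ given $W$ through a mean-value expansion in the measure argument combined with the dimension-free empirical-measure estimate of Lemma~\ref{lem:lln:l4:appendix}.

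Since the two SDEs share the same Brownian motions and initial conditions, $\Delta_t^i = \int_0^t\bigl[\widetilde b(s,\overline X_s^i,m^n_{\overline{\boldsymbol X}_s}) - \widetilde b(s,\widehat X_s^i,\mu_s)\bigr]\,ds$. Adding and subtracting $\widetilde b(s,\widehat X_s^i,m^n_{\widehat{\boldsymbol X}_s})$, and combining the Lipschitz continuity of $\widetilde b$ in $x$ (inherited from Assumption~C.2 via boundedness of $D_x\widehat b$, $D_y\widehat b$, and $D_x^2U$) and in $m$ with respect to $\W_1$ (inherited from Assumptions~C.3 and C.4) with the trivial bound $\W_1(m^n_{\overline{\boldsymbol X}_s},m^n_{\widehat{\boldsymbol X}_s}) \le \tfrac{1}{n}\sum_j|\Delta_s^j|$, one obtains
\[
|\Delta_t^i| \le C\int_0^t\bigl(|\Delta_s^i| + \tfrac{1}{n}\textstyle\sum_j|\Delta_s^j|\bigr)\,ds + \int_0^t |F_s^i|\,ds,
\]
with $F_s^i := \widetilde b(s,\widehat X_s^i,m^n_{\widehat{\boldsymbol X}_s}) - \widetilde b(s,\widehat X_s^i,\mu_s)$. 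Raising to the fourth power via Jensen, averaging over $i$, and applying Gronwall together with the exchangeability of $(\overline X^i,\widehat X^i)_i$ yields $\E[\sup_{t\le T}|\Delta_t^1|^4] \le C \int_0^T \E[|F_s^1|^4]\,ds$, so it suffices to prove $\sup_{s\in[0,T]}\E[|F_s^1|^4] \le Cn^{-2}$.

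For this I apply the integral mean-value expansion in $m$: setting $\mu^\lambda := (1-\lambda)\mu_s + \lambda m^n_{\widehat{\boldsymbol X}_s}$,
\[
F_s^1 = \int_0^1\!\int_{\R^d} \tfrac{\delta\widetilde b}{\delta m}(s,\widehat X_s^1,\mu^\lambda,v)\,(m^n_{\widehat{\boldsymbol X}_s} - \mu_s)(dv)\,d\lambda.
\]
By Assumption~C.3, the integrand has, as a function of $v$, bounded derivatives up to order $4+2\newD$ uniformly in $(s,x,m,\lambda)$; since $\newD = \lfloor d/2\rfloor +1 > d/2$, the weight $(1+|v|^{2\newD})^{-1}$ is integrable on $\R^d$, so the $\mathcal H^{2+2\newD,\newD}$-norm of $\tfrac{\delta\widetilde b}{\delta m}(s,x,m,\cdot)$ is bounded by a constant $C$ independent of $(s,x,m)$. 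The duality pairing between $\mathcal H^{2+2\newD,\newD}$ and $\mathcal H^{-(2+2\newD),\newD}$ then gives
\[
|F_s^1| \le C\,\|m^n_{\widehat{\boldsymbol X}_s} - \mu_s\|_{\mathcal H^{-(2+2\newD),\newD}}.
\]
Conditional on $W$ the $\widehat X_s^j$ are i.i.d.\ with law $\mu_s$, and Lemma~\ref{lem:lln:l4:appendix} provides the dimension-free fourth-moment estimate $\E\|m^n_{\widehat{\boldsymbol X}_s}-\mu_s\|_{\mathcal H^{-(2+2\newD),\newD}}^4 \le Cn^{-2}$ uniformly in $s$, under the moment condition $p'>12\newD$. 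Combining the two estimates closes the argument.

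The principal obstacle is obtaining an $n^{-1/2}$ rate that does not degrade with $d$: a naive application of the $\W_1$-Lipschitz continuity of $\widetilde b$ would control $|F_s^1|$ only by $C\W_1(m^n_{\widehat{\boldsymbol X}_s},\mu_s)$, which by Fournier--Guillin yields only the rate $O(n^{-1/d})$ in $L^4$ once $d\ge 3$. The remedy is to replace $\W_1$ by the dual of a Sobolev space with enough smoothness to invoke Sobolev embedding in dimension $d$, which is exactly what singles out the scale $\mathcal H^{\pm(2+2\newD),\newD}$ used throughout the paper; this in turn is why both the high-order regularity in Assumption~C.3 and the moment condition $p'>12\newD$ are needed.
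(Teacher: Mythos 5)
Your Gronwall reduction (Step 1) matches the paper's, but your treatment of the key term $F^1_s=\widetilde b(s,\widehat X^1_s,m^n_{\widehat{\bm X}_s})-\widetilde b(s,\widehat X^1_s,\mu_s)$ departs from the paper's, and as written it has a gap. You linearize only to first order in $m$, bound $\|\tfrac{\delta\widetilde b}{\delta m}(s,x,m,\cdot)\|_{2+2\newD,\newD}$ uniformly, and then claim that Lemma \ref{lem:lln:l4:appendix} yields $\E\|m^n_{\widehat{\bm X}_s}-\mu_s\|^4_{-(2+2\newD),\newD}\le Cn^{-2}$. That is not what Lemma \ref{lem:lln:l4:appendix} says: it controls $\E[|\phi(m^n_{\bm\zeta})-\phi(m_0)|^4]$ for a \emph{single fixed} ${\mathscr C}^2$ functional $\phi$ of the measure, whereas the dual Sobolev norm is a supremum over the unit ball of $\cH^{2+2\newD,\newD}$, and passing from one fixed test function to such a supremum is precisely the delicate point. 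The estimate you want is of the type proved in Proposition \ref{prop:tightness:1}, via Parseval and Rosenthal's inequality for Hilbert-space-valued independent variables (applied conditionally on $W$, since the $\widehat X^j_s$ are conditionally i.i.d.); note that that proposition only delivers moments of order $2(1+\epsilon)<4$ in a related space under the stated hypotheses, so you would have to carry out a genuine fourth-moment version of that argument rather than cite the existing statements. Your route can be completed this way, but the justification you give does not close it.

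For contrast, the paper's Step 2 applies Lemma \ref{lem:lln:l4:appendix} exactly in its intended form: after a decoupling step (replacing $m^n_{\bm\zeta}$ by the leave-one-out measure and conditioning on $\zeta_i$, so that the evaluation point is deterministic), it takes $\phi(m)=\widetilde b(t,x,m)$ itself as the functional. The dimension-free rate then comes from the combinatorial/conditioning argument in that lemma, which crucially uses the \emph{second-order} derivative $\delta^2\widetilde b/\delta m^2$, its boundedness, and its $\W_1$-Lipschitz continuity in $m$ — i.e., Assumption \ref{assumption:C}(5), which your proof never invokes. This is a useful diagnostic: an argument for this lemma that makes no use of $\delta^2\widetilde b/\delta m^2$ must be importing the $n^{-1/2}$ rate from somewhere else (here, from an unproved negative-Sobolev fourth-moment bound), so either supply the Rosenthal-type argument explicitly or follow the paper's route through Lemma \ref{lem:lln:l4:appendix} as stated.
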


\begin{proof} {\ } \\
\noindent\textbf{Step 1:}
The key point is to show the bound
\begin{equation}
\label{eq:clt:n12:w2}
\E\left[ \sup_{t \in [0,T]}{\mathcal W}_{2}\bigl( m^n_{\overline{\boldsymbol X}_{t}},m^n_{\widehat{\boldsymbol X}_{t}}\bigr)^{4} \right]^{1/4} \leq C n^{-1/2}.
\end{equation}
Although the derivation of \eqref{eq:clt:n12:w2} may seem standard, as the rate $n^{-1/2}$ is the one we expect from the law of large numbers, there is in fact a subtle point.
The proof would be fairly straightforward if the mean field interaction was of order 1, in the sense that $\widetilde{b}(t,x,m)$ was of the form of 
$\int_{\RR^d} \overline{b}(t,x,x') d m(x')$, for some Lipschitz function $\overline{b} : [0,T] \times \RR^d \times \RR^d \rightarrow \RR^d$ (cf. \cite{meleard,jourdain-meleard}). Obviously, there is no reason for this to be the case here, as $U$ is expected to inherit the nonlinear structure of the master equation. 
As shown in \cite{kurtz-xiong} (see in particular the assumption (S3) therein), \eqref{eq:clt:n12:w2} holds true provided
that for any $n \in \N$, any $m_0 \in \P^{p'}(\R^d)$, any i.i.d.\ samples $\bm{\zeta}=(\zeta_1,\ldots,\zeta_n)$ with law $m_0$, any $i \in \{1,\ldots,n\}$, and any $t \in [0,T]$ we have:
\begin{equation}
\label{eq:S3}
\begin{split}
&{\mathbb E} \Bigl[ \bigl\vert \widetilde{b}\bigl(t,\zeta_i,m^n_{{\boldsymbol \zeta}} \bigr) 
- 
\widetilde{b}\bigl(t,\zeta_i,m_0 \bigr)
\bigr\vert^{4} \Bigr]^{1/4} 
 \leq C \Bigl( 1  +\int_{\RR^d} \vert x \vert^4 \, m_0(dx) 
\Bigr) n^{-1/2}.
\end{split}
\end{equation}
The role of \eqref{eq:S3} is well-understood. Indeed, the proof of 
\eqref{eq:clt:n12:w2} is a straightforward application of Gronwall's lemma, provided we can estimate the term
\begin{equation*}
\int_{0}^T {\mathbb E} \Bigl[
 \bigl\vert \widetilde{b}\bigl(t,\widehat X_{t}^i,m^n_{\widehat{\boldsymbol X}_{t}} \bigr) 
- 
\widetilde{b}\bigl(t,\widehat{X}_{t}^i,{\mu_t} \bigr)
\bigr\vert^{4} \Bigr]^{1/4} dt.
\end{equation*}
Now, it suffices to invoke \eqref{eq:S3} but under 
$\EE[ \, \cdot \, \vert W]$, as the particles 
$(\widehat X^1_{t},\ldots,\widehat X^n_{t})$ are {i.i.d.\ with law $\mu_t$,} conditional on $W$. 
\vskip 4pt

\noindent\textbf{Step 2:}
We turn to the proof of \eqref{eq:S3}.
Using the Lipschitz property of $\widetilde{b}$ in $m$, we can easily replace $m^n_{\bm{\zeta}}$ by $m^{n,-i}_{\bm{\zeta}}$, the empirical measure of $(\zeta_k)_{k \neq i}$. Indeed, since $\widetilde{b}$ is Lipschitz,
the left-hand side in \eqref{eq:S3} can be bounded by:
\begin{equation*}
\begin{split}
&C n^{-1/2} {\mathbb E} \bigl[  \vert \zeta_i \vert^{4} \bigr]^{1/4} 
+ {\mathbb E} \Bigl[
 \bigl\vert \widetilde{b}\bigl(t,\zeta_i,m^{n,-i}_{\bm{\zeta}} \bigr) 
- 
\widetilde{b}\bigl(t,\zeta_i,m_0 \bigr)
\bigr\vert^{4} \Bigr]^{1/4}.
\end{split}
\end{equation*}
By conditioning on $\zeta_{i}$ in the second term and by changing the variable 
$n$ into $n-1$,
it is in fact sufficient to prove: 
\begin{equation}
\label{eq:S3:bis}
\begin{split}
&{\mathbb E} \Bigl[ \bigl\vert \widetilde{b}\bigl(t,x,m^n_{\bm{\zeta}} \bigr) 
- 
\widetilde{b}\bigl(t,x,m_0 \bigr)
\bigr\vert^{4} \Bigr]^{1/4} 
 \leq C \Bigl( 1  +\int_{\RR^d} \vert x' \vert^4 \, m_0(dx') 
\Bigr) n^{-1/2},
\end{split}
\end{equation}
for any $x \in \RR^d$, the constant $C$ being independent of $x$.  
So, the only point is to find sufficient conditions on $\widetilde{b}$ to guarantee 
\eqref{eq:S3:bis}. This is exactly the purpose of 
Lemma \ref{lem:lln:l4:appendix} in appendix. 
It requires that $m \mapsto \widetilde{b}(t,x,m)$ is ${{\mathscr C}^2}$ in the sense defined in Subsection \ref{subse:derivatives:m:P2} with bounded derivatives of order 1 and 2 and that the derivative of order 2 is Lipschitz continuous in 
$m$ with respect to the $1$-Wasserstein distance, uniformly in the other parameters. This holds true under the assumption of Theorem 
\ref{th:CLT:sec:3}.
\end{proof} 

\subsubsection{A preliminary bound toward tightness} A second important point in the proof is that the following (Sobolev) embeddings are continuous:  
\begin{eqnarray}
\label{eq:embedding:1}
&{\mathcal H}^{\ell+j,\alpha} \hookrightarrow {\mathfrak C}^{j,\alpha}, &\quad \ell > \frac{d}{2}, \ j \geq 0, \ \alpha \geq 0,
\\
&{\mathfrak C}^{j,0}  \hookrightarrow {\mathcal H}^{j,\alpha}, 
&\quad \alpha > \frac{d}{2}, \ j \geq 0,
\label{eq:embedding:2}
\\
&{\mathcal H}^{\ell+j,\alpha}
\hookrightarrow_{\textrm{\rm H.S.}}
{\mathcal H}^{j,\alpha+\beta}, &\quad
\ell > \frac{d}{2}, \ j \geq 0, \ \alpha \geq 0, \ \beta > \frac{d}{2},
\label{eq:embedding:3}
\end{eqnarray}
with similar Sobolev embeddings for the dual spaces (with converse order), where ${\mathfrak C}^{j,\alpha}$ is the space of functions $g$ with continuous partial derivatives up to order $j$ such that:
\begin{equation*}
\lim_{\vert x \vert \rightarrow \infty}
\frac{\vert D^{\boldsymbol k} g(x) \vert}{1+ \vert x \vert^{\alpha}} =0, \quad \textrm{\rm for  all multi-indices}
\ {\boldsymbol k} \ \textrm{\rm such that} \ \vert  {\boldsymbol k} \vert \leq j.
\end{equation*}
In the embeddings (\ref{eq:embedding:1}--\ref{eq:embedding:2}), the space ${\mathfrak C}^{j,\alpha}$ is equipped with with the norm:
\begin{equation*}
\| g \|_{ {{\mathfrak C}}^{j,\alpha}}
= \sum_{\vert {\boldsymbol k} \vert \leq j}
\sup_{x \in \RR^d}
\frac{ \vert D^{\boldsymbol k} g(x) \vert}{1+ \vert x \vert^{\alpha}}.
\end{equation*}
In \eqref{eq:embedding:3}, the symbol H.S. means that the embedding is Hilbert-Schmidt.

As a consequence of Lemma 
\ref{lem:clt:n1/2}, we have the following result, the proof of which is a mere adaptation of Proposition 5.3 in \cite{meleard}.

\begin{proposition}
\label{prop:tightness:1}
Under the assumptions of Theorem \ref{th:CLT:sec:3}, for any $\epsilon  \in (0,1)$ 
such that $p' > 8\newD (1+\epsilon)/(1-\epsilon)$ with 
$p'$ as in Assumption \textbf{\bf A}, it holds:
\begin{equation*}
\sup_{n \geq 1} \sup_{t \in [0,T]}
\EE \bigl[ \|\overline S_{t}^n \|_{-(1+\newD),2\newD}^{2(1+\epsilon)}
\bigr] < \infty.
\end{equation*}
\end{proposition}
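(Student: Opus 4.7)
The plan is to decompose $\overline S_{t}^n = A_{t}^n + B_{t}^n$, where
$A_{t}^n := \sqrt{n}(m^n_{\bm{\overline{X}}_{t}} - m^n_{\bm{\widehat{X}}_{t}})$ measures the coupling error with the auxiliary system introduced in \eqref{eq:hat:coupling} and $B_{t}^n := \sqrt{n}(m^n_{\bm{\widehat{X}}_{t}} - \mu_{t})$ is the genuine, conditionally i.i.d.\ empirical fluctuation. By the elementary inequality $(a+b)^{2(1+\epsilon)} \leq 2^{2(1+\epsilon)-1}(a^{2(1+\epsilon)}+b^{2(1+\epsilon)})$, it suffices to bound $\EE[\|A_{t}^n\|^{2(1+\epsilon)}_{-(1+\newD),2\newD}]$ and $\EE[\|B_{t}^n\|^{2(1+\epsilon)}_{-(1+\newD),2\newD}]$ uniformly in $n$ and $t$.

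For $A_{t}^n$, I would first invoke the Sobolev embedding \eqref{eq:embedding:1} with $j=1$, $\alpha = 2\newD$ and $\ell = \newD$ (which is valid since $\newD > d/2$) to obtain $|D_x\phi(x)| \leq C\|\phi\|_{1+\newD,2\newD}(1+|x|^{2\newD})$ pointwise for any $\phi \in \cH^{1+\newD,2\newD}$, and hence, by the mean-value theorem, $\|\delta_{y}-\delta_{x}\|_{-(1+\newD),2\newD} \leq C(1+|x|^{2\newD}+|y|^{2\newD})|y-x|$. Summing over $i$ this yields
\begin{equation*}
\|A_{t}^n\|_{-(1+\newD),2\newD} \leq \frac{C}{\sqrt{n}} \sum_{i=1}^n (1+|\overline X_{t}^i|^{2\newD}+|\widehat X_{t}^i|^{2\newD}) |\overline X_{t}^i-\widehat X_{t}^i|.
\end{equation*}
I would then raise this to the $2(1+\epsilon)$-th power, apply Cauchy--Schwarz inside the sum, and split the resulting product via H\"older's inequality with the specific conjugate exponents $q_{1}=2/(1-\epsilon)$ and $q_{2}=2/(1+\epsilon)$. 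This choice makes the exponent of $|\overline X_{t}^i-\widehat X_{t}^i|$ in the second H\"older factor equal to $4$, so that the $n$-growth coming from the power-of-a-sum inequality $(\sum a_{i})^{1+\epsilon} \leq n^{\epsilon}\sum a_{i}^{1+\epsilon}$ is exactly absorbed by the $L^{4}$ coupling bound $\EE[|\overline X_{t}^i-\widehat X_{t}^i|^{4}]^{1/2} \leq C/n$ from Lemma \ref{lem:clt:n1/2}; the surviving first factor reduces to $\EE[(1+|\overline X_{t}^1|^{2\newD}+|\widehat X_{t}^1|^{2\newD})^{4(1+\epsilon)/(1-\epsilon)}]^{(1-\epsilon)/2}$, which is finite precisely under the hypothesis $p' > 8\newD(1+\epsilon)/(1-\epsilon)$.

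For $B_{t}^n$, I would exploit the fact that, conditional on $W$, the variables $\widehat X_{t}^1,\dots,\widehat X_{t}^n$ are i.i.d.\ with common law $\mu_{t}$. Writing $B_{t}^n = n^{-1/2}\sum_{i=1}^n Z_{t}^i$ with $Z_{t}^i := \delta_{\widehat X_{t}^i}-\mu_{t}$, this is a sum of conditionally i.i.d.\ centered random variables valued in the Hilbert space $\cH^{-(1+\newD),2\newD}$. Rosenthal's inequality, available in any Hilbert space (which is of type $2$), then yields
\begin{equation*}
\EE\bigl[\|B_{t}^n\|^{2(1+\epsilon)}_{-(1+\newD),2\newD}\,\big\vert\,W\bigr] \leq C\bigl(\EE[\|Z_{t}^1\|^2\,\vert\,W]\bigr)^{1+\epsilon} + Cn^{-\epsilon}\EE\bigl[\|Z_{t}^1\|^{2(1+\epsilon)}\,\big\vert\,W\bigr].
\end{equation*}
The pointwise bound $\|\delta_{x}\|_{-(1+\newD),2\newD} \leq C(1+|x|^{2\newD})$, itself a direct consequence of the embedding \eqref{eq:embedding:1} with $j=0$, controls each $\|Z_{t}^i\|^{q}$ by polynomial moments of order $2\newD q$ of both $\widehat X_{t}^i$ and $\mu_{t}$. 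Taking unconditional expectation therefore only requires moments of $\X_{t}$ of order at most $4\newD(1+\epsilon)$, a constraint already ensured by the hypothesis on $p'$ imposed by the analysis of $A_{t}^n$.

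The main obstacle is the careful bookkeeping of the H\"older exponents in the analysis of $A_{t}^n$: the specific pairing $(q_{1},q_{2}) = (2/(1-\epsilon),2/(1+\epsilon))$ is dictated by the need to exactly absorb the $n$-factors produced by the convexity inequality against the sharp $L^{4}$ coupling bound of Lemma \ref{lem:clt:n1/2}, and it is precisely this trade-off that produces the sharp threshold $p' > 8\newD(1+\epsilon)/(1-\epsilon)$ appearing in the statement. Modulo this accounting, the argument is an essentially verbatim adaptation of \cite[Proposition 5.3]{meleard}, to which the paper ultimately appeals.
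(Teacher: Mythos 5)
Your proposal is correct and follows essentially the same route as the paper: the same decomposition into the coupling term and the conditionally i.i.d.\ term, the same bound $\|d_{x,y}\|_{-(1+\newD),2\newD}\le C|x-y|(1+|x|^{2\newD}+|y|^{2\newD})$ combined with H\"older at exponents $2/(1-\epsilon)$ and $2/(1+\epsilon)$ against the $L^4$ bound of Lemma \ref{lem:clt:n1/2}, and Rosenthal's inequality for Hilbert-valued (conditionally) independent variables for the second term. The exponent bookkeeping leading to the threshold $p'>8\newD(1+\epsilon)/(1-\epsilon)$ is exactly as in the paper.
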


\begin{proof} For a test function $\phi \in {\mathcal H}^{1+\newD,2\newD}$, we let:
\begin{equation*}
\overline K^n_{t}(\phi) = n^{-1/2}
\sum_{i=1}^n \bigl( \varphi( \overline X_{t}^i ) - \varphi( \widehat X_{t}^i ) \bigr), 
\quad \overline L^n_{t}(\phi) = n^{-1/2}
\sum_{i=1}^n \bigl(    \phi( \widehat X_{t}^i ) \bigr) - \EE [ \phi(\widehat X_{t}^1)  \, | \, W] 
\bigr),
\end{equation*}
noting that $\overline{K}^n_t(\phi)+\overline{L}^n_t(\phi)=\overline{S}^n_t(\phi)$.
As for the first term, we use the fact that 
\begin{equation*}
\begin{split}
\| \overline{K}_{t}^n \|^{2(1+\epsilon)}_{-(1+\newD),2\newD}
&\leq n^{1+\epsilon} n^{-1}
\sum_{i=1}^n \| d_{\overline{X}^i_{t},\widehat{X}^i_{t}} \|^{2(1+\epsilon)}_{-(1+\newD),2\newD} = n^{\epsilon}     
\sum_{i=1}^n \| d_{\overline{X}^i_{t},\widehat{X}^i_{t}} \|^{2(1+\epsilon)}_{-(1+\newD),2\newD},
\end{split}
\end{equation*}
and we conclude as 
 in the derivation of 
\cite[(5.3)]{meleard}, using H\"older instead of Cauchy-Schwarz inequality with exponents $2/(1+\epsilon)$ and $2/(1-\epsilon)$. 
Here we define $d_{x,y}(\phi) = \phi(x) - \phi(y)$ and $\| d_{x,y} \|_{-(1+\newD ),2\newD }
\leq C \vert x- y \vert ( 1+ \vert x \vert^{2\newD } 
+ \vert y \vert^{2\newD })$, see 
\cite[Lemma 6.1]{meleard}.

Regarding the second term, we apply Rosenthal's inequality for Hilbert-valued  independent random variables, see \cite[Theorem 5.2]{pinelis1994}. 
Indeed, we observe that 
\begin{equation*}
\overline L^n_{t} = n^{-1/2} \sum_{i=1}^n 
\bigl( d_{\widehat X_{t}^i}
- {\mathbb E}[ d_{\widehat {X}_{t}^1} \, | \, W]
\bigr),
\end{equation*}
with 
$d_{x}(\phi) = \phi(x)$ and 
$\| d_{x}\|_{-(1+\newD ),2\newD } \leq 
C (1+ \vert x \vert^{2\newD })$,
see again 
\cite{meleard}. In particular,  
$\| d_{\widehat X_{t}^i}
- {\mathbb E}[ d_{\widehat {X}_{t}^1}]
\|_{-(1+\newD ),2\newD } \leq C ( 1+ \vert X_{t}^i \vert^{2\newD } + {\mathbb E}[ \vert 
\widehat {X}_{t}^1 \vert^{2\newD }])$. 
The result follows.
\end{proof}

\subsubsection{Statement and proof of tightness}
The proof of tightness relies on the following expansion, which holds true for any smooth test function 
$\phi$ with compact support. 
\begin{equation}
\label{eq:expansion:1}
\begin{split}
d \langle \overline S^n_{t},\phi \rangle 
&= \sqrt{n}\Bigl[\bigl\langle
m^n_{\overline{\boldsymbol X}_{t}},
 D_x \phi(\cdot) \cdot \widetilde b(t,\cdot,m^n_{\overline{\boldsymbol X}_{t}}) \bigr\rangle - \bigl\langle \mu_{t}, D_x \phi(\cdot) \cdot \widetilde b(t,\cdot,\mu_{t})
  \bigr\rangle \Bigr] dt 
 \\
&\hspace{15pt} + \tfrac12
  \bigl\langle \overline S^n_{t},  
\mathrm{Tr}[(\sigma\sigma^\top+\sigma_0\sigma_0^\top)D_x^2\phi]
   \bigr\rangle dt 
 \\
&\hspace{15pt} +
\bigl\langle \overline S^n_{t}, \sigma_{0}^\top D_x \phi
\rangle \cdot dW_{t}
 + \frac{1}{\sqrt{n}} \sum_{i=1}^n 
\bigl( \sigma^\top D_x \phi(\overline{X}_{t}^i)
\bigr)  \cdot dB_{t}^i. 
\end{split}
\end{equation}
Following \cite[(5.6)]{meleard}, we let:
\begin{equation*}
M^{n,\phi}_{t} =
\frac{1}{\sqrt{n}} \sum_{i=1}^n \int_0^t 
\bigl( \sigma^\top
D_x \phi(\overline X_{s}^i) \bigr) \cdot dB_{s}^i.
\end{equation*}
If we call $(d_{x} D_{x_\ell})(\phi) =
(\partial \phi/\partial x_{\ell})(x)$ for $\ell \in \{1,\cdots,d\}$, we have
$\| d_{x} D_{x_\ell} \|_{-(1+\newD ),2\newD }
\leq C (1+ \vert x \vert^{2\newD })$, see the proof of 
\cite[Lemma 6.1]{meleard}. 
Hence, if $(e_{k})_{k \geq 1}$ denotes a complete orthonormal system in ${\mathcal H}^{1+\newD ,2\newD }$, we use Parseval's identity to get
\begin{equation*}
\begin{split}
\E\left[
\sum_{k \geq 1} \sup_{t \in [0,T]}
\vert M^{n,e_{k}}_{t} \vert^2
\right] &\leq C
\sum_{k \geq 1}
 \int_{0}^T {\mathbb E}
\bigl[ \vert D_x e_{k}(\overline X_{t}^1) \vert^2 \bigr] dt
\leq C
 \int_{0}^T {\mathbb E}
\bigl[ \bigl( 1+ \vert \overline X_{t}^1 \vert^{4\newD }\bigr)  \bigr] dt < \infty.
\end{split} 
\end{equation*}
Therefore, we can define:
\begin{equation*}
M^n_{t} = \sum_{k \geq 1}
M^{n,e_{k}}_{t} e_{k}^*, \quad t \in [0,T],
\end{equation*}
where $e_{k}^* \in {\mathcal H}^{-(1+\newD ),2\newD }$ is defined by $e_{k}^*(\phi) = \langle e_{k}, \phi \rangle$. 
The process $(M^n_{t})_{t \in [0,T]}$ forms a continuous martingale with values in ${\mathcal H}^{-(1+\newD ),2\newD }$. 
Obviously, $(M^n(\phi))_{t \in [0,T]}$ coincides almost surely with $M^{n,\phi}$. Actually, following \cite[Proposition 5.7]{meleard}, the above bound can be made stronger, namely
\begin{equation}
\label{eq:cl:b}
\sup_{n \geq 1} \E\left[ \sup_{t \in [0,T]} \| M^n_{t} \|^2_{-(1+\newD ),2\newD }
\right] < \infty.
\end{equation}
Here is now the main claim of this subsection.

\begin{proposition}
\label{thm:clt:tightness}
Under the assumptions of Theorem 
\ref{th:CLT:sec:3}, 
 the sequences $((\overline S^n_{t})_{t \in [0,T]})_{n \geq 1}$ and $((M^n_{t})_{t \in [0,T]})_{n \geq 1}$ are tight on the space $C([0,T],{\mathcal H}^{-(2+2\newD ),\newD })$, where again $\newD := \lfloor d/2 \rfloor + 1$.
\end{proposition}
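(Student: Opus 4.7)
The plan is to verify the two standard ingredients for tightness in $C([0,T],\cH^{-(2+2\newD),\newD})$: tightness of one-dimensional marginals at each time, and an Aldous--Kolmogorov type equicontinuity estimate on increments. Throughout, the crucial structural fact is that, by dualizing the Hilbert--Schmidt embedding \eqref{eq:embedding:3} with the choice $j=1+\newD$, $\ell=1+\newD$, $\alpha=\newD$, $\beta=\newD$ (all of which satisfy the required strict inequalities since $\newD>d/2$), one obtains a compact (in fact Hilbert--Schmidt) inclusion $\cH^{-(1+\newD),2\newD}\hookrightarrow \cH^{-(2+2\newD),\newD}$.

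For marginal tightness, I would combine Proposition \ref{prop:tightness:1} and the martingale bound \eqref{eq:cl:b}: both provide uniform (in $n$ and $t$) $L^p$-control of the $\cH^{-(1+\newD),2\newD}$-norms of $\overline S^n_t$ and $M^n_t$. Since bounded balls of $\cH^{-(1+\newD),2\newD}$ are relatively compact in the target space $\cH^{-(2+2\newD),\newD}$, Markov's inequality yields tightness of $\{\overline S^n_t\}_n$ and $\{M^n_t\}_n$ in $\cH^{-(2+2\newD),\newD}$ for every $t$.

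For equicontinuity, I would rely on the semimartingale decomposition \eqref{eq:expansion:1} and bound $\mathbb{E}[\|\overline S^n_t-\overline S^n_s\|_{-(2+2\newD),\newD}^{2}]$ by $C|t-s|$, which implies tightness by a Kolmogorov-type criterion in Hilbert space (or, equivalently, the Aldous--Rebolledo criterion applied to the canonical pairing with basis elements). The idiosyncratic martingale piece is $M^n_t-M^n_s$: expanding in a complete orthonormal system $(e_k)$ of $\cH^{2+2\newD,\newD}$, using the It\^o isometry coordinate by coordinate, and exploiting the Sobolev bound $\|d_x D_{x_\ell}\|_{-(1+\newD),2\newD}\leq C(1+|x|^{2\newD})$ together with the moment bound $\sup_n\sup_{t\leq T}\E[|\overline X^1_t|^{4\newD}]<\infty$ (available because $p'>12\newD$ in Theorem \ref{th:CLT:sec:3}), gives the needed $O(|t-s|)$ bound. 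The common-noise martingale $\int_s^t \langle \overline S^n_r,\sigma_0^\top D_x\phi\rangle\cdot dW_r$ is bounded using Proposition \ref{prop:tightness:1} after checking that the multiplication operator $\phi\mapsto\sigma_0^\top D_x\phi$ is bounded from $\cH^{2+2\newD,\newD}$ into a space paired with $\cH^{-(1+\newD),2\newD}$ (which follows from the embeddings in \eqref{eq:embedding:1}--\eqref{eq:embedding:2}). The argument for $M^n$ alone is an immediate byproduct of the first estimate.

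\textbf{The main obstacle} is the bounded-variation drift term of \eqref{eq:expansion:1}, which after linearization via the fundamental theorem of calculus takes the form
\begin{equation*}
\sqrt{n}\bigl[\langle m^n_{\overline{\bm X}_r},D_x\phi\cdot\widetilde b(r,\cdot,m^n_{\overline{\bm X}_r})\rangle - \langle \mu_r,D_x\phi\cdot\widetilde b(r,\cdot,\mu_r)\rangle\bigr] = \langle \overline S^n_r,\A^n_{r}\phi\rangle,
\end{equation*}
where $\A^n_{r}$ is built from $\widetilde b$ and $\delta\widetilde b/\delta m$ evaluated at convex combinations of $m^n_{\overline{\bm X}_r}$ and $\mu_r$. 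One must show that $\A^n_r$ maps $\cH^{2+2\newD,\newD}$ continuously into a space in duality with $\cH^{-(1+\newD),2\newD}$, with operator-norm bound uniform in $n$ and $r$. Assumption \ref{assumption:C} --- namely the boundedness and joint smoothness of $\widetilde b$, $D_xU$ and their first measure derivatives, together with Lipschitz continuity of the second measure derivative in $\cW_1$ --- is precisely what is needed to carry this out; integrating in $r\in[s,t]$ and using the uniform $L^{2(1+\epsilon)}$ bound on $\|\overline S^n_r\|_{-(1+\newD),2\newD}$ from Proposition \ref{prop:tightness:1} then yields the required $O(|t-s|)$ estimate. Once both pieces are in place, tightness of $(\overline S^n)$ and $(M^n)$ in $C([0,T],\cH^{-(2+2\newD),\newD})$ follows from the standard Hilbert-valued tightness criterion.
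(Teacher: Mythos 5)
Your proposal is correct and follows essentially the same route as the paper: the uniform moment bound of Proposition \ref{prop:tightness:1} in the Hilbert--Schmidt-embedded space $\cH^{-(1+\newD),2\newD}$, the linearization of the nonlocal drift via the operator $\mathcal{G}$ with operator-norm bounds supplied by Assumption \ref{assumption:C} (and a H\"older argument to absorb the random moment of $\mu_t$), Parseval/It\^o-isometry control of the idiosyncratic martingale and Doob's inequality for the common-noise integral, followed by the M\'el\'eard-type Hilbertian tightness criterion. The only caveat is that an increment bound of order $|t-s|$ in $L^2$ is not enough for a Kolmogorov continuity criterion; the valid conclusion is through the Aldous--Rebolledo route you also mention, which is exactly what the paper invokes (via \cite[Propositions 5.9--5.10, Theorem 5.12]{meleard}) after establishing the slightly stronger uniform bound \eqref{eq:bound:4} with the supremum in time inside the sum over the orthonormal system.
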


\begin{proof} {\ } \\
\noindent\textbf{Step 1:}
We write the first term in the right hand side of \eqref{eq:expansion:1} in the form:
\begin{equation}
\label{eq:expansion:2}
\begin{split}
&{\sqrt{n}}
\bigl\langle
m^n_{\overline{\boldsymbol X}_{t}},
 D_x \phi(\cdot) \cdot \widetilde b(t,\cdot,m^n_{\overline{\boldsymbol X}_{t}}) \bigr\rangle - \bigl\langle \mu_{t}, D_x \phi(\cdot) \cdot \widetilde b(t,\cdot,\mu_{t})
  \bigr\rangle
  \\
  &\quad\quad=
    \bigl\langle
\overline{S}^n_{t},
 D_x \phi(\cdot) \cdot \widetilde b(t,\cdot,
 m^n_{\overline{\boldsymbol X}_{t}}
 ) \bigr\rangle
 + \Delta^n_{t}(\phi),
 \end{split}
\end{equation}
where 
\begin{equation*}
\begin{split}
 \Delta^n_{t}(\phi)
 &=  \sqrt{n}\bigl\langle \mu_{t}, 
 D_x \phi(\cdot) \cdot \widetilde b(t,\cdot,m^n_{\overline{\boldsymbol X}_{t}}) - D_x \phi(\cdot) \cdot \widetilde b(t,\cdot,\mu_{t})
  \bigr\rangle \\
  &=\int_{\RR^d} d\mu_{t}(x) \int_{0}^1 d\lambda \int_{\RR^d} D_x \phi(x) \cdot \frac{\delta \widetilde b}{\delta m}\bigl(t,x,\lambda m^n_{\overline{\boldsymbol X}_{t}} + (1-\lambda) \mu_{t}\bigr)(v) d \overline{S}^n_{t} (v) \\
&=  
\int_{\RR^d}d \overline{S}^n_{t} (v)\biggl[\int_{\RR^d} D_x \phi(x) \cdot \biggl(\int_{0}^1\frac{\delta \widetilde b}{\delta m}\bigl(t,x,\lambda m^n_{\overline{\boldsymbol X}_{t}} + (1-\lambda) \mu_{t}\bigr)(v) d \lambda\biggr)d\mu_{t}(x) \biggr].
\end{split}
\end{equation*}
Following  \cite[(5.8)]{meleard}, we write $ \Delta^n_{t}(\phi)$ as
\begin{equation}
\label{eq:expansion:3}
 \Delta^n_{t}(\phi)
 =  \Bigl\langle 
 \overline{S}^n_{t},\bigl[
 {\mathcal G}\bigl(t,m^n_{\overline{\boldsymbol X}_{t}} ,\mu_{t}\bigr) \phi \bigr](\cdot)
 \Bigr\rangle,
 \end{equation}
 with
 \begin{equation}
 \label{eq:def:cal:G}
\bigl[ {\mathcal G}\bigl(t,m^n_{\overline{\boldsymbol X}_{t}} ,\mu_{t}\bigr) \phi \bigr](v)
= 
\biggl\langle 
\mu_{t},
D_x \phi(\cdot) \cdot 
\biggl(
\int_{0}^1
\frac{\delta \widetilde b}{\delta m}\bigl(t,\cdot,
\lambda m^n_{\overline{\boldsymbol X}_{t}}  + (1-\lambda) \mu_{t}\bigr)(v) d \lambda 
\biggr)
\biggr\rangle.
\end{equation}
We then have the analogue of
\cite[Lemma 5.6]{meleard}, namely
\begin{equation}
\label{eq:bound:2}
\begin{split}
&\bigl\|
\bigl[ {\mathcal G}\bigl(t,m^n_{\overline{\boldsymbol X}_{t}},\mu_{t}\bigr) \phi \bigr](\cdot)
\bigr\|_{(1+\newD ),2\newD } \leq C \bigl\| \phi \bigr\|_{2+2\newD ,\newD }
\biggl( \int_{\RR^d} \bigl( 1+ \vert y \vert^{2\newD } \bigr) 
d\mu_{t}(y) \biggr)^{1/2}
\\
&\bigl\|
D^2_{x_\ell,x_{\ell'}} \phi (\cdot)
\bigr\|_{1+\newD,2\newD} 
+
\bigl\|
D_{x_\ell}\phi (\cdot)
\bigr\|_{1+\newD ,2\newD }
\leq C \bigl\| \phi \bigr\|_{3+\newD ,2\newD }, \quad \ell,\ell' =1,\cdots,d, \phantom{\int}
\\
&\bigl\|
\bigl[ D_x \phi \cdot \widetilde b(t,\cdot,m^n_{\overline{\boldsymbol X}_{t}})  \bigr](\cdot)
\bigr\|_{1+\newD ,2\newD } \leq C \bigl\| \phi \bigr\|_{3+\newD ,2\newD }.\phantom{\int}
\end{split}
\end{equation}
Indeed, under assumption \ref{assumption:C}, $\widetilde b$ and its derivatives in $x$ up to order $\newD +1$ are bounded, and the last two lines follow easily. After noticing that 
$\| \phi \|_{3+\newD ,2\newD } \leq C
\| \phi \|_{2+2\newD ,\newD }$ since $\newD  \geq 1$,
the first line follows from the fact that $\delta \widetilde b/\delta m$ and its derivatives in $v$ up to order $\newD +1$ are bounded, see \cite[Lemma 5.6]{meleard}.
Notice that the right-hand side in 
the first line is random, which is a special feature due to the presence of the common noise. It implies in particular that
\begin{equation}
\label{eq:conclusion:3rdstep}
\begin{split}
&\bigl\|
{\mathcal G}\bigl(t,m^n_{\overline{\boldsymbol X}_{t}},\mu_{t}\bigr)^* \overline{S}^n_{t} 
\bigr\|_{-(2+2\newD ),\newD }^2 
\leq C  
\biggl( 1 +\int_{\RR^d} \vert y \vert^{2\newD } d\mu_{t}(y)
\biggr)
\|\overline S^n_{t}\|_{-(1+\newD ),2\newD }^2.
\end{split}
\end{equation}
 \vspace{4pt}

\noindent\textbf{Step 2:} We now have the 
analogue of \cite[Proposition 5.9]{meleard}. We claim that, for a complete orthonormal system $(\psi_{p})_{p \geq 1}$ in ${\mathcal H}^{2 + 2\newD ,\newD }$, we have
\begin{equation}
\label{eq:bound:4}
\sup_{n \geq 1} 
\E\left[ \sum_{p \geq 1}
\sup_{t \in [0,T]} \langle \overline{S}^n_{t},\psi_{p}
\rangle^2 \right] < \infty. 
\end{equation}
In comparison with the proof of \cite[Proposition 5.9]{meleard}, there are two main differences, concerning the two terms (see 
\eqref{eq:expansion:1}):
\begin{equation} 
\label{eq:clt:two:difficult:terms}
\begin{split}
&\int_{0}^T \Bigl\vert 
\Bigl\langle \overline{S}_{s}^n,
\bigl[ {\mathcal G}(s,m^n_{\overline{\boldsymbol X}_{s}},\mu_{s}) \psi_{p} \bigr](\cdot)
\Bigr\rangle
\Bigr\vert^2 ds,
\quad \sup_{t \in [0,T]} \biggl\vert \int_{0}^t
\bigl\langle \overline{S}^n_{s}, \sigma_{0}^{\top}D_x \psi_{p}
\bigr\rangle \cdot dW_{s}
\biggr\vert^2.
\end{split}
\end{equation}
As for the first term, the difference with \cite{meleard} comes from the fact that, in \eqref{eq:conclusion:3rdstep}, the integral with respect to $\mu_{t}$ is random. A careful inspection of the proof of  \cite[Proposition 5.9]{meleard} combined with the statement 
of our Proposition \ref{prop:tightness:1}
 shows that (with $\epsilon$ as therein)
\begin{equation*}
\begin{split}
\E\left[ \sum_{p \geq 1} \bigl\langle \overline S^n_{t},\bigl[ {\mathcal G}\bigl(t,m^n_{\overline{\boldsymbol X}_{t}},\mu_{t}\bigr) \psi_{p} \bigr](\cdot) \bigr\rangle^2\right]
&=
\EE \Bigl[ 
\bigl\|
{\mathcal G}(t,m^n_{\overline{\boldsymbol X}_{t}},\mu_{t})^* \overline{S}^n_{t} 
\bigr\|_{-(2+2\newD ),\newD }^2 \Bigr]
\\
&\leq C \EE
\biggl[ 
\biggl( 1 +\int_{\RR^d} \vert y \vert^{2\newD } d\mu_{t}(y)
\biggr)
\|\overline S^n_{t}\|_{-(1+\newD ),2\newD }^2
\biggr]
\\
&\leq C \EE \biggl[ \biggl( 
1 + \int_{\R^d} \vert y \vert^{2\newD } d\mu_{t}(y)
\biggr)^{\frac{1+\epsilon}{\epsilon}}
\biggr]^{{\frac{\epsilon}{1+\epsilon}}}.
\end{split}
\end{equation*}
Since $p'>12 \newD $, we can choose 
$\epsilon = 1/5$, then $ 8\newD  (1+ \epsilon) / (1-\epsilon) = 12\newD $ and $2\newD  (1+\epsilon)/\epsilon =12 \newD $, and then   
Proposition
\ref{prop:tightness:1} holds true and the above right hand side is finite, uniformly in $t$.

We now handle the second term in \eqref{eq:clt:two:difficult:terms}. This term is new. By Doob's inequality,
\begin{equation*}
\begin{split}
\sum_{p \geq 1}
\E\left[ \sup_{t \in [0,T]} \biggl\vert \int_{0}^t
\bigl\langle \overline{S}^n_{s}, \sigma_{0}^{\top} D_x \psi_{p} 
\bigr\rangle
\cdot dW_{s}
\biggr\vert^2
\right]
&\leq C \sum_{p \geq 1} \sum_{i=1}^d  
\EE \biggl[ \int_{0}^T
\bigl\langle \overline{S}^n_{s}, D_{x_i} \psi_{p}(\cdot) \bigr\rangle^2
ds \biggr]
\\
&\leq C   
\sum_{i=1}^d
\EE \left[\int_{0}^T\| D_{x_i}^* \overline{S}_{s}^n \|_{-(2+2\newD ),\newD }^2 ds\right]
\\
&\leq C   
\sum_{i=1}^d
\EE \left[\int_{0}^T\|  \overline{S}_{s}^n \|_{-(1+2\newD ),\newD }^2 ds\right]
\end{split}
\end{equation*}   
which completes the proof of 
\eqref{eq:bound:4} since ${\mathcal H}^{-(1+\newD ),2\newD }$ embeds (continuously) in 
${\mathcal H}^{-(1+2\newD ),\newD }$. 
We then complete the proof with an easy adaptation of 
\cite[Proposition 5.10]{meleard} and 
\cite[Theorem 5.12]{meleard}.
\end{proof}

\subsection{Identification of limit points}

We now identify the (weak) limit points of the sequence $(\overline S^n,M^n)_{n \geq 1}$. 

\subsubsection{Limit of $(M^n)_{n \geq 1}$} The first step is to identify the limit (in law) of the sequence $(M^n)_{n \geq 1}$. We follow the proof of \cite[Theorem 5.14]{meleard}. The only difference with the framework investigated in \cite{meleard} comes from the fact that the limit process $(\mu_{t})_{t \in [0,T]}$ is random, which prompts us to work conditionally its realization.

To state the result, we first construct, on the same probability space as the one used to define the mean field game, a random element  $(\theta_{0},\xi)$ of $\cH^{-(2+2\newD ),\newD } \times  C([0,T],{\mathcal H}^{-(2+2\newD ),\newD })$ which, as in Subsection 
\ref{se:CLT-statements}, satisfies the following:
$\theta_{0}$ and $(W,\mu,\xi)$ are independent,
$\theta_{0}$
is a Gaussian random variable with covariance
\begin{equation}
\label{eq:clt:covariance:limit:initial:condition}
\forall \phi_{1}, \phi_{2} \in \cH^{2+2\newD ,\newD },
\quad
\EE \bigl[ 
\theta_{0}(\phi_{1}) \theta_{0}(\phi_{2})
\bigr] = \langle \mu_{0}, \phi_{1} \phi_{2}
\rangle,
\end{equation}
and $\xi$ is, conditional on $(W,\mu)$,
a continuous Gaussian process with covariance
\begin{equation}
\label{eq:clt:covariance:limit}
\begin{split}
\forall \phi_{1},\phi_{2} \in \cH^{2+2\newD ,\newD },
\ &\forall s,t \in [0,T], \quad
\\
&{\mathbb E} \bigl[ \xi_{t}(\phi_{1}) \xi_{s}(\phi_{2}) \, \vert \, W,\mu \bigr] 
=
 \int_{0}^{s \wedge t}
\bigl\langle \mu_{r},
\sigma \sigma^\top D_x \phi_{1} \cdot 
D_x \phi_{2}
 \bigr\rangle dr.
\end{split}
\end{equation}
Notice
from 
\eqref{eq:embedding:1}
 that, for all $\phi \in \cH^{1+2\newD ,\newD }$,  
the function $\RR^d \ni x \mapsto \phi(x)/(1+ \vert x \vert^\newD )$ is bounded. Since $p'$ in Assumption 
{\bf A} is greater than $2\newD $, we have 
$\EE[ \sup_{t \in [0,T]} \langle \mu_{t},\phi^2 \rangle] < \infty$, which shows that the above covariances make sense. Recall also that $\mu$ is $W$-measurable, so that conditioning on $\mu$ in \eqref{eq:clt:covariance:limit} is redundant. 
We claim:
\begin{lemma} 
\label{lem:weak:input:cv}
The sequence $(W,\mu,\overline S^n_{0},M^n)_{n \geq 1}$ converges in law to $(W,\mu,\theta_{0},\xi)$ on the space 
$ C([0,T],\R^{d_0}) \times  C([0,T],\cP^{1}(\R^d))
\times \cH^{-(2+2\newD ),\newD }
\times  C([0,T],{\mathcal H}^{-(2+2\newD ),\newD })$.
\end{lemma}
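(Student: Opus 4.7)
The plan is to decompose the convergence statement into joint tightness plus identification of the subsequential limits, handling the three random components $\overline{S}^n_0$, $M^n$ and the (constant-in-$n$) pair $(W,\mu)$ separately before reassembling. Tightness of $(M^n)$ in $C([0,T],\cH^{-(2+2\newD),\newD})$ is already delivered by Proposition \ref{thm:clt:tightness}, while tightness of $(\overline S^n_0)$ in the same space follows from the Hilbert-valued Rosenthal inequality exactly as in the proof of Proposition \ref{prop:tightness:1}, this time applied to the i.i.d.\ family of centered Dirac evaluations associated with $(X^i_0)_{i=1}^n$ and using the moment hypothesis $p'>12\newD$. Since $(W,\mu)$ does not depend on $n$, joint tightness of the full vector $(W,\mu,\overline S^n_0,M^n)$ is immediate.

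For identification, I would first observe that $\overline S^n_0\to\theta_0$ in law in $\cH^{-(2+2\newD),\newD}$ by the classical CLT for empirical measures of i.i.d.\ samples (cf.\ \cite{meleard}), with $\theta_0$ the Gaussian random element of covariance \eqref{eq:clt:covariance:limit:initial:condition}. Moreover, $\theta_0$ is independent of $(W,\mu,M^n)$ for every $n$, because the sequence $(X^i_0)_{i\ge 1}$ is independent of the Brownian family $(W,B^1,B^2,\ldots)$ which drives both $\overline{\bm X}$ and $M^n$. For $M^n$, the key observation is that it is a continuous $\FF$-martingale whose real-valued cross-bracket against test functions $\phi_1,\phi_2\in\cH^{2+2\newD,\newD}$ is
\[
[M^{n,\phi_1},M^{n,\phi_2}]_t
=\int_0^t\bigl\langle m^n_{\overline{\bm X}_s},\,D_x\phi_1\cdot\sigma\sigma^\top D_x\phi_2\bigr\rangle\,ds.
\]
Combining Theorem \ref{th:LLN} with the pointwise growth bound $|D_x\phi(x)|\le C\|\phi\|_{2+2\newD,\newD}(1+|x|^{\newD})$ coming from the Sobolev embedding \eqref{eq:embedding:1} and the uniform $p'$-moment bounds available for $\overline{\bm X}$ (recall $p'>2\newD$), the bracket converges in probability, uniformly in $t\in[0,T]$, to the $W$-measurable quantity $\int_0^t\langle\mu_s,\,D_x\phi_1\cdot\sigma\sigma^\top D_x\phi_2\rangle\,ds$, which is exactly the conditional bracket of $\xi$ given $(W,\mu)$. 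Applying Rebolledo's martingale CLT to each real projection $M^{n,\phi}$, a Cram\'er--Wold argument on linear combinations $M^{n,\sum_j c_j\phi_j}$ for finitely many test functions, together with the separate convergence and independence of $(W,\mu,\overline S^n_0)$, yields convergence of the finite-dimensional distributions of $(W,\mu,\overline S^n_0,M^n)$ to those of $(W,\mu,\theta_0,\xi)$. Combined with tightness, this delivers convergence in law on the full path space.

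The main obstacle is twofold. First, one must check that $\xi$ actually takes values in $\cH^{-(2+2\newD),\newD}$ rather than in a larger space; this requires the Hilbert--Schmidt Sobolev embedding \eqref{eq:embedding:3} together with the moment control on $\mu_s$ to bound the trace of the candidate covariance operator uniformly. Second, one must cope with the random ($W$-measurable) nature of the limiting covariance: the remedy is to carry $W$ through the Cram\'er--Wold step, so that conditional on the Brownian path the covariance becomes deterministic and the classical martingale CLT applies pathwise, before reintegrating against the law of $W$.
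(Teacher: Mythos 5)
Your overall architecture (tightness via Proposition \ref{thm:clt:tightness}, identification of $\theta_0$ by the classical CLT, identification of $\xi$ through the martingale structure of $M^n$) matches the paper's, but there is one step that is wrong as written and one genuine methodological divergence worth recording.

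The incorrect step is the assertion that ``$\theta_0$ is independent of $(W,\mu,M^n)$ for every $n$'' because the initial states are independent of the Brownian motions. The integrand of $M^{n,\phi}$ is $\sigma^\top D_x\phi(\overline X^i_s)$, and $\overline X^i$ solves \eqref{eq:sec4:overlineX} started from $X^i_0$; hence $M^n$ is a functional of the initial states as well as of $(W,B^1,\ldots,B^n)$, and it is \emph{not} independent of $\overline S^n_0$ at finite $n$. What is true, and what the paper uses, is only that $(W,\mu)$ is independent of $\overline S^n_0$. The independence of $\vartheta_0$ from the limit $\zeta$ is then recovered \emph{a posteriori}: one shows that, conditional on $(\varpi,\nu,\vartheta_0)$, $\zeta$ is a continuous Gaussian martingale whose covariation $\int_0^t\langle\nu_s,\sigma\sigma^\top D_x\phi_1\cdot D_x\phi_2\rangle\,ds$ is $\nu$-measurable; since the conditional law of $\zeta$ does not depend on $\vartheta_0$ and $(\varpi,\nu)\perp\vartheta_0$, the triple $(\varpi,\nu,\zeta)$ is independent of $\vartheta_0$. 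Your argument needs this repair; without it the identification of the \emph{joint} law of the quadruple has a hole.

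On the method for $M^n$: you invoke Rebolledo's martingale CLT together with a Cram\'er--Wold reduction, conditioning on $W$ to make the limiting bracket deterministic. The paper instead writes the three conditional martingale properties of $M^n(\phi)$, $M^n(\psi)$ and of \eqref{def:mtg-crossterm} as expectations against bounded continuous functionals $h_1(W,\mu,\overline S^n_0)\,h_2(M^n_{\cdot\wedge s}(\phi))$, passes to the limit in these identities, and then identifies $\zeta$ as a conditionally Gaussian martingale from its bracket. The two routes are both viable; the martingale-problem route buys you the joint convergence with $(W,\mu,\overline S^n_0)$ for free (the conditioning is built into the test functionals $h_1$), whereas your conditional-CLT route requires an additional stable-convergence or regular-conditional-probability argument to reassemble the pathwise limits into a statement about the joint law, a point your proposal acknowledges only implicitly. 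Your computation of the cross-bracket, the use of Theorem \ref{th:LLN} and the moment bounds to pass to the limit in it, and the Hilbert--Schmidt embedding argument for well-posedness of the covariance of $\xi$ are all consistent with the paper.
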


\begin{proof}
Because the first two components $(W,\mu)$ are fixed, it follows from Proposition \ref{thm:clt:tightness} that the sequence in question is tight. Hence, we characterize the limit points.

We first observe that, for any $n \geq 1$, $(W,\mu)$ is $W$-measurable and $\overline{S}^n_{0}$ is measurable with respect to $(X_{0}^1,\ldots,X_{0}^N)$. As a consequence, $(W,\mu)$ and $\overline{S}^n_{0}$ are independent and,
for any weak limit $(\varpi,\nu,\vartheta_{0})$ 
of $(W,\mu,\overline{S}^n_{0})_{n \geq 1}$, 
$(\varpi,\nu)$ and $\vartheta_{0}$ are also independent. For any 
$\phi \in \cH^{2+2\newD ,\newD }$, the law of $\vartheta_{0}(\phi)$ is easily identified by means of the standard version of the central limit theorem. As a byproduct, we easily identify the law of $\vartheta_{0}$, regarded as a random variable with 
values in $\cH^{-(2+2\newD ),\newD }$, with the law of $\theta_{0}$.

We finally show that any weak limit 
$(\varpi,\nu,\vartheta_{0},\zeta)$
of the quadruple $(W,\mu,\overline{S}^n_{0},M^n)_{n \geq 1}$ coincides in law with 
 $(W,\mu,\theta_{0},\xi)$.
The proof is as follows. Fix $\phi,\psi \in \cH^{2+2\newD ,\newD }$. We know that, for any 
$n \geq 1$, $M^n(\phi)$ can be written as
\begin{equation*}
M^n_{t}(\phi) = \int_{0}^t \frac1{\sqrt{n}} \sum_{i=1}^n 
\bigl(
\sigma^{\top} D_x \phi(\overline X_{s}^i) \bigr) \cdot dB_{s}^i, \quad t \in [0,T].
\end{equation*}
As $(W,\mu,\overline{S}^n_{0})$ is independent of $B^1,\ldots,B^n$, we easily deduce that, conditional on $(W,\mu,\overline{S}^n_{0})$, the three processes $(M^n_t(\phi))_{t \in [0,T]}$, $(M^n_t(\psi))_{t \in [0,T]}$,
and
\begin{align}
M^n_t(\phi)M^n_t(\psi) - \int_{0}^{t} \langle m^n_{\overline{\boldsymbol X}_s}, \sigma\sigma^\top D_x  \phi \cdot D_x\psi \rangle ds, \quad t \in [0,T], \label{def:mtg-crossterm}
\end{align}
are martingales. That is, for any real-valued bounded continuous functions $h_{1}$ on $ C([0,T],\RR^{d_0})
\times  C([0,T],{\mathcal P}^{2}(\RR^d))
\times \cH^{-(2+2\newD ),\newD }$ and 
$h_{2}$ on $ C([0,T],\RR)$ and for any $0 \leq s \leq t \leq T$,
\begin{equation*}
\EE \Bigl[  h_{1} \bigl(W,\mu,\overline{S}^n_{0}\bigr) h_{2}\bigl(M^n_{\cdot \wedge s}(\phi)\bigr) \Bigl(M^n_{t}(\phi) - M^n_{s}(\phi)\Bigr) \Bigr] = 0,
\end{equation*}
and similarly for the other two martingales. Letting $n$ tend to $\infty$, we get
\begin{equation*}
\EE \Bigl[  h_{1} (\varpi,\nu,\theta_{0}) h_{2}\bigl(\zeta_{\cdot \wedge s}(\phi)\bigr) \Bigl(\zeta_{t}(\phi)-\zeta_{s}(\phi)\Bigr)\Bigr] = 0.
\end{equation*}
Arguing similarly for the other two processes, we deduce that, conditional on $(\varpi,\nu,\vartheta_{0})$, the processes $(\zeta_t(\phi))_{t \in [0,T]}$, $(\zeta_t(\psi))_{t \in [0,T]}$, and (the limit of \eqref{def:mtg-crossterm})
\begin{align*}
\zeta_t(\phi)\zeta_t(\psi) - \int_{0}^{t} \langle \nu_s, \sigma\sigma^\top D_x  \phi \cdot D_x\psi \rangle ds, \quad t \in [0,T]
\end{align*}
are martingales. So, conditional on $(\varpi,\nu,\vartheta_0)$, $(\zeta_t(\phi))_{t \in [0,T]}$ and $(\zeta_t(\psi))_{t \in [0,T]}$ are Gaussian with covariation process $\bigl(\int_{0}^t\langle \nu_{s},\vert \sigma\sigma^\top D_x \phi \cdot D_x \psi\vert^2 \rangle ds\bigr)_{t \in [0,T]}$. Since this covariation process is $\nu$-measurable, and since $(\varpi,\nu)$ and $\vartheta_{0}$ are independent, we deduce in particular that  $(\varpi,\nu,\zeta)$ and $\vartheta_{0}$ are independent. 
Moreover, we have found that, conditionally on $(\varpi,\nu)$, $\zeta$ is a Gaussian process on $H^{-(2+2\newD ),\newD }$ with covariance
\begin{equation}
\label{eq:covariance:nu:zeta}
\begin{split}
\forall &\phi_{1},\phi_{2} \in \cH^{2+2\newD ,\newD },
\ \forall s,t \in [0,T], 
\\
&{\mathbb E} \bigl[ \zeta_{t}(\phi_{1}) \zeta_{s}(\phi_{2}) \, \vert \, \varpi,\nu \bigr] =
 \int_{0}^{s \wedge t}
\bigl\langle \nu_{r}, 
\sigma \sigma^\top
D_x \phi_{1}
\cdot
D_x \phi_{2} \bigr\rangle dr.
\end{split}
 \end{equation}
Since $(W,\mu)$ and $(\varpi,\nu)$ have the same distribution, this shows that $(\varpi,\nu,\zeta)$ has the same law as $(W,\mu,\xi)$. 
\end{proof}

\subsubsection{Limit of $(m^n_{\overline{\boldsymbol X}},W,\overline{S}^n)_{n \geq 1}$}
We claim:
\begin{lemma}
Let $(\nu,\nu',\varpi,\zeta,S)$ be (weak) limit point of $(m^n_{\overline{\boldsymbol X}},\mu,W,M^n,\overline{S}^n)_{n \ge 1}$. Then, with probability 1,  we have $\nu'=\nu$, and 
$S$ solves the equation:
\begin{equation}
\label{eq:lim:spde:for:clt}
\begin{split}
d \bigl\langle S_{t},\phi
\bigr\rangle 
&= \Bigl[
    \bigl\langle
 S_{t},
 D_x \phi(\cdot) \cdot \widetilde b(t,\cdot,\nu_{t}) \bigr\rangle
 +  \bigl\langle 
 S_{t},\bigl[{\mathcal G}\bigl(t,\nu_{t},\nu_{t}\bigr) \phi \bigr](\cdot)
 \bigr\rangle \Bigr] dt
 \\ 
&\hspace{15pt} + \tfrac12 \bigl\langle S_{t},
\textrm{\rm Tr}[ ( \sigma \sigma^\top 
+ \sigma_{0} \sigma_{0}^\top ) D^2_x \phi ] \bigr\rangle  dt +
\bigl\langle S_{t}, \sigma_{0}^\top D_x \phi 
\bigr\rangle \cdot d \varpi_{t}
+ d \zeta_{t}(\phi),
\end{split}
\end{equation} 
for $t \in [0,T]$ and $\phi \in \cH^{2\newD +4,\newD }$, with 
${\mathcal G}$ as in 
 \eqref{eq:def:cal:G}. 
\end{lemma}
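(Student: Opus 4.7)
The plan is to take the identity \eqref{eq:expansion:1}, rewritten using the decomposition \eqref{eq:expansion:2}--\eqref{eq:expansion:3}, as a (random) integral equation expressing $\langle \overline{S}^n_t, \phi\rangle$ in terms of the initial condition $\langle \overline{S}^n_0,\phi\rangle$, drift integrals bilinear in $\overline{S}^n_s$ and in operators built from $m^n_{\overline{\boldsymbol X}_s}$ and $\mu_s$, a common-noise stochastic integral against $W$, and the idiosyncratic martingale $M^n_t(\phi)$, and then to pass to the limit along a subsequence on which $(m^n_{\overline{\boldsymbol X}}, \mu, W, M^n, \overline{S}^n)$ converges jointly in law to $(\nu, \nu', \varpi, \zeta, S)$. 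The joint tightness is given by Proposition \ref{thm:clt:tightness} combined with Lemma \ref{lem:weak:input:cv}, and Skorokhod's representation theorem lets me work on a probability space where this convergence is almost sure.

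The identification $\nu' = \nu$ is immediate: the second coordinate equals $\mu$ identically in $n$, so $\nu' = \mu$, and by the propagation-of-chaos argument used to prove Theorem \ref{th:LLN} (in particular Lemma \ref{lem:clt:n1/2} combined with the standard McKean--Vlasov convergence of $m^n_{\widehat{\boldsymbol X}}$ to $\mu$), the first coordinate satisfies $m^n_{\overline{\boldsymbol X}} \to \mu$ as well, so $\nu = \nu' = \mu$ almost surely. This also lets me identify the limit of $\mathcal{G}(s, m^n_{\overline{\boldsymbol X}_s}, \mu_s)$ with $\mathcal{G}(s, \nu_s, \nu_s)$ in \eqref{eq:lim:spde:for:clt}.

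For a fixed $\phi \in \cH^{4+2\newD, \newD}$, passing to the limit in the three deterministic drift integrands reduces to two ingredients: the uniform $\cH^{1+\newD, 2\newD}$-norm bounds in \eqref{eq:bound:2}, and the continuity in the measure arguments of $\widetilde{b}(s, \cdot, m)$, $\mathcal{G}(s, m, m')\phi$, and their $x$-derivatives, which under Assumption \ref{assumption:C} (notably the $\cW_1$-Lipschitz hypotheses on $\delta \widetilde{b}/\delta m$) holds as a map from $\cP^1(\R^d)$ into $\cH^{1+\newD, 2\newD}$. Combined with the $\cH^{-(2+2\newD), \newD}$-convergence $\overline{S}^n_s \to S_s$ and the uniform integrability in $n$ of the resulting bilinear forms provided by Proposition \ref{prop:tightness:1} (used with a value of $\epsilon$ permitted by $p' > 12\newD$), dominated convergence yields the pointwise-in-$t$ limit of each $ds$-integral.

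The main obstacle is the common-noise stochastic integral $\int_0^t \langle \overline{S}^n_s, \sigma_0^\top D_x \phi\rangle \cdot dW_s$, in which both the integrand process and the driver $W$ must be passed to the limit jointly. I would apply a stability theorem for It\^o integrals along weakly convergent sequences, for instance Kurtz--Protter's result: the integrands $s \mapsto \langle \overline{S}^n_s, \sigma_0^\top D_x \phi\rangle$ are adapted to the filtration generated by $(W, B^1, \dots, B^n, X_0^1, \dots, X_0^n)$ and uniformly bounded in $L^2(ds \otimes d\PP)$ by Proposition \ref{prop:tightness:1} together with the $\cH^{1+2\newD, \newD}$-bound on $\sigma_0^\top D_x \phi$, while the ``uniform tightness'' hypothesis on the drivers is trivially satisfied because $W$ is the same for every $n$. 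The limit identifies with $\int_0^t \langle S_s, \sigma_0^\top D_x \phi\rangle \cdot d\varpi_s$, and combined with $M^n_t(\phi) \to \zeta_t(\phi)$ from Lemma \ref{lem:weak:input:cv} this yields \eqref{eq:lim:spde:for:clt} for every smooth compactly supported $\phi$. Extending to arbitrary $\phi \in \cH^{4+2\newD, \newD}$ by density and the Sobolev-scale continuity of all operators involved, and using separability to pick a common full-probability event, completes the identification of $S$ as a solution of \eqref{eq:lim:spde:for:clt}.
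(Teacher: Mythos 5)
Your proposal is correct and follows essentially the same route as the paper: rewrite \eqref{eq:expansion:1} via \eqref{eq:expansion:2}--\eqref{eq:expansion:3}, use the almost sure convergence of $m^n_{\overline{\boldsymbol X}}$ to $\mu$ to get $\nu=\nu'$ and the convergence of ${\mathcal G}(t,m^n_{\overline{\boldsymbol X}_t},\mu_t)\phi$ in $\cH^{2+2\newD,\newD}$, pass to the limit term by term, and conclude by separability. You merely make explicit (Skorokhod representation, Kurtz--Protter stability for the $dW$-integral, uniform integrability from Proposition \ref{prop:tightness:1}) the steps the paper compresses into ``there is no real difficulty for passing to the limit.''
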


\begin{proof}
The fact that $\nu=\nu'$ is a mere consequence of the fact that $\sup_{t \in [0,T]}W_{2}(m^n_{\bm{\overline{X}}_t},\mu_t)$ tends to $0$ with probability 1.

In order to handle the second part of the statement, we come back to the
expansion \eqref{eq:expansion:1},
see also \eqref{eq:expansion:2}
and \eqref{eq:expansion:3}. 
We rewrite the expansion under the form
\begin{equation*}
\begin{split}
d\bigl\langle \overline{S}^n_{t},\phi \bigr\rangle 
&=  
\Bigl[
    \bigl\langle
\overline{S}^n_{t},
 D_x \phi(\cdot) \cdot \widetilde b(t,\cdot,m^n_{\overline{\boldsymbol X}_{t}}) \bigr\rangle
 +  \bigl\langle 
\overline{S}^n_{t},\bigl[{\mathcal G}\bigl(t,m^n_{\overline{\boldsymbol X}_{t}},\mu_{t}\bigr) \phi \bigr](\cdot)
 \bigr\rangle \Bigr] dt
 \\ 
&\hspace{15pt} +
 \tfrac12 \bigl\langle \overline S^n_{t},  
\mathrm{Tr}[(\sigma\sigma^\top+\sigma_0\sigma_0^\top)D^2_x\phi]
   \bigr\rangle dt 
 +
\bigl\langle \overline S^n_{t}, \sigma_{0}^\top D_x \phi
\rangle \cdot dW_{t}
+ dM^n_{t}(\phi), \quad t \in [0,T].  
\end{split}
\end{equation*}
We take $\phi$ in the space $\cH^{4+2\newD ,\newD }$ as in required in the statement. {Under the assumption of 
Theorem 
\ref{th:CLT:sec:3}, $\widetilde b$ is differentiable in $x$
up to the order $2+2\newD $, with bounded and continuous derivatives. Similarly, 
$\delta \widetilde b/\delta m$ is differentiable in 
the variable $v$ up the order $2+2\newD $, with joint continuous (in all the arguments) and bounded derivatives.} In particular, 
$\cG(t,m^n_{\overline{\boldsymbol X}_{t}},\mu_{t}) \phi$ has bounded derivatives 
up to the order $2+2\newD $. 
The key point is that a function with derivatives up the order $2+2\newD $ that are at bounded is in 
$\cH^{2+2\newD ,\newD }$, which follows from the fact that $\RR^d \ni x \mapsto 
1/(1+\vert x\vert^{2\newD })$ is integrable since $2\newD  \geq d+1$. Hence, 
$\cG(t,m^n_{\overline{\boldsymbol X}_{t}},\mu_{t}) \phi$ converges almost surely in 
$\cH^{2+2\newD ,\newD }$ to 
$\cG(t,\mu_{t},\mu_{t}) \phi$
and 
there is no real difficulty for passing to the limit in the above expansion.
We get 
\eqref{eq:lim:spde:for:clt}.

By a standard separability argument, \eqref{eq:lim:spde:for:clt} holds with probability 1 for all $\phi$ in the space $\cH^{4+2\newD ,\newD }$. 
\end{proof}

Observe that equation \eqref{eq:lim:spde:for:clt} may be regarded as an equation in the space $\cH^{-(4+2\newD ),\newD }$, while 
$S$ is known to take values in the smaller space 
$\cH^{-(2+2\newD ),\newD }$. 
In order to prove convergence of the sequence 
$(m^n_{\overline{\boldsymbol X}},W,M^n,\overline{S}^n)_{n \ge 1}$, one must prove uniqueness of the solution to this equation.

\subsubsection{Uniqueness to the limiting equation} \label{se:CLTlimitinguniqueness}

We shall prove in Subsection \ref{subse:proof:!:spde} the following uniqueness result:

\begin{theorem}
\label{thm:uniqueness:spde}
Fix a given filtered probability space
$(\Xi,{\mathcal G},{\mathbb G},{\mathbb Q})$ equipped with an adapted triple 
$(\varpi,\nu,\zeta)$ such that 
$\varpi=(\varpi_{t})_{t \in [0,T]}$ is a $d$-dimensional Brownian motion, $\nu=(\nu_t)_{t \in [0,T]}$ is a $\P^{2\newD}(\R^d)$-valued process,
and, conditional on $(\varpi,\nu)$, $\zeta$ is a Gaussian process with the same covariance structure as in \eqref{eq:covariance:nu:zeta}. 
Then,  for any square integrable initial condition $S_{0}$ with values in $\cH^{-(2+2\newD ),\newD }$
and independent of $(\varpi,\nu,\zeta)$, the equation \eqref{eq:lim:spde:for:clt} has at most one (adapted) square-integrable solution with paths in $ C([0,T],\cH^{-(2+2\newD ),\newD })$. 
\end{theorem}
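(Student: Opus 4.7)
The plan is to exploit the linearity of \eqref{eq:lim:spde:for:clt}, eliminate the common noise via a pathwise shift that turns the SPDE into a pathwise (random-coefficient) parabolic evolution, and then invoke standard linear-PDE uniqueness through a backward adjoint equation.

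By linearity of \eqref{eq:lim:spde:for:clt} in $S$, the difference $\Delta := S^{(1)} - S^{(2)}$ of two solutions with the same $S_0$ satisfies the same equation with $\Delta_0 = 0$ and $\zeta \equiv 0$, so it suffices to prove $\Delta \equiv 0$ in $C([0,T],\cH^{-(2+2\newD),\newD})$. To remove the common-noise term, define the shifted process $\widetilde \Delta_t$ by $\langle \widetilde \Delta_t,\phi\rangle := \langle \Delta_t,\phi(\cdot - \sigma_0\varpi_t)\rangle$ for $\phi \in \cH^{4+2\newD,\newD}$. Writing $\phi_t(x) := \phi(x - \sigma_0\varpi_t)$, a direct It\^o expansion gives $d\phi_t = -\sigma_0^\top D_x\phi_t \cdot d\varpi_t + \tfrac{1}{2}\mathrm{Tr}[\sigma_0\sigma_0^\top D_x^2\phi_t]\,dt$, and applying the It\^o product rule to $\langle \Delta_t,\phi_t\rangle$ one verifies that (i) the two $d\varpi$ stochastic integrals---$\langle \Delta_t,\sigma_0^\top D_x\phi_t\rangle \cdot d\varpi_t$ from $d\Delta_t$ and $-\langle \Delta_t,\sigma_0^\top D_x\phi_t\rangle \cdot d\varpi_t$ from $\langle \Delta_t,d\phi_t\rangle$---exactly cancel, while (ii) the bilinear cross-variation produces $-\langle \Delta_t,\mathrm{Tr}[\sigma_0\sigma_0^\top D_x^2\phi_t]\rangle\,dt$, which together with the two $\tfrac{1}{2}\sigma_0\sigma_0^\top$ second-order drifts from $dS_t$ and $d\phi_t$ sums to zero. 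Changing variables, what remains is a pathwise linear equation for $\widetilde\Delta$,
\[
d\langle \widetilde\Delta_t,\phi\rangle = \bigl\langle \widetilde\Delta_t,\, \widehat{\cL}_{t,\nu_t,\varpi_t}\phi\bigr\rangle\, dt,\qquad \widetilde\Delta_0 = 0,
\]
where $\widehat{\cL}_{t,\nu_t,\varpi_t}$ is the linear operator built from the $\sigma\sigma^\top$ Laplacian and the shifted transport and non-local operators (drift $\widetilde b(t,\cdot+\sigma_0\varpi_t,\nu_t)$ and kernel $\delta\widetilde b/\delta m$ integrated against the pushforward of $\nu_t$ by the same shift).

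Conditionally on a realization of $(\varpi,\nu)$, the preceding equation is a purely deterministic linear parabolic evolution for $\widetilde\Delta$ in $\cH^{-(2+2\newD),\newD}$, with time-dependent but bounded coefficients, and with zero initial datum. Uniqueness then follows by a standard duality argument with the backward adjoint problem
\[
\partial_s\varphi_s + \widehat{\cL}_{s,\nu_s,\varpi_s}\varphi_s = 0,\qquad \varphi_{t_0} = \chi,\qquad s \in [0,t_0],
\]
for arbitrary $\chi \in \cH^{4+2\newD,\newD}$ and $t_0 \in (0,T]$. Solvability of this backward equation in $C([0,t_0];\cH^{4+2\newD,\newD})$ follows from a contraction-mapping argument: non-degeneracy of $\sigma$ (Assumption \ref{assumption:A}(2)) supplies a smoothing heat semigroup on this Sobolev scale, while the regularity and uniform boundedness of $\widetilde b$, $D_xU$, $\delta\widetilde b/\delta m$ and their derivatives granted by Assumption \ref{assumption:C} make the transport and non-local pieces bounded perturbations (uniformly in $(\varpi_t,\nu_t)$), and the moment condition $p'>12\newD$ in Assumption \ref{assumption:A} controls the random coefficients. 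The pairing $s \mapsto \langle \widetilde\Delta_s,\varphi_s\rangle$ is then constant pathwise (the respective drifts exactly annihilate each other by construction), so $\langle \widetilde\Delta_{t_0},\chi\rangle = \langle \widetilde\Delta_0,\varphi_0\rangle = 0$; a separability argument over a countable dense family of $\chi$'s in $\cH^{4+2\newD,\newD}$ gives $\widetilde\Delta_{t_0} = 0$ a.s.\ in $\cH^{-(2+2\newD),\newD}$, whence $\Delta_{t_0} = 0$ a.s.\ upon unwinding the shift, and pathwise continuity finally yields $\Delta \equiv 0$.

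The main obstacle will be the regularity theory for the backward equation for $\varphi$ in the random environment $(\varpi,\nu)$, since the non-local operator $\cG$ is a bounded integral operator against $\nu_t$ that must be controlled at the $\cH^{4+2\newD,\newD}$-level in order to compensate for the loss of regularity from the transport drift. A natural route is to solve the backward equation first with $\nu$ replaced by a piecewise-constant-in-time interpolation, derive a priori $\cH^{4+2\newD,\newD}$-bounds uniform in the time-discretization (leveraging the Sobolev embeddings \eqref{eq:embedding:1}--\eqref{eq:embedding:2} and the moment bound $p'>12\newD$), and pass to the limit along the partition.
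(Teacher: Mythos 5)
Your overall architecture is legitimate but genuinely different from the paper's, and it contains one unresolved step that is in fact the crux of the whole argument. The paper does \emph{not} remove the common noise by a shift nor construct any adjoint solution: it performs a direct energy estimate on $\|\delta S_t\|^2_{-(4+2\newD),\newD}$ by summing \eqref{eq:uniqueness} over an orthonormal basis, then integrates by parts on the Riesz representative $\widetilde\varrho$ in the weighted Sobolev space. There the It\^o correction $\sum_i\|(\sigma_0^\top D_x)_i^*\delta S_t\|^2$ cancels against the $\sigma_0\sigma_0^\top$ part of $\cL_0^*$, the non-degeneracy of $\sigma$ supplies the coercive term $-\|\sigma^\top D_x\widetilde\varrho\|^2$ that absorbs the commutators from the weight and the transport term, a mollification step extends the identities from representatives in $\cH^{6+2\newD,\newD}$ to general ones, and Gronwall with the random weight $(\int|y|^{2\newD}d\nu_t)^{1/2}$ concludes (see \eqref{eq:conclusion:4thstep}). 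Your cancellation bookkeeping in the shift step is the same cancellation in disguise and is correct; what your route buys is that all the regularity burden is moved onto the test function, avoiding the paper's basis expansion and mollification of $\widetilde\varrho$.

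The gap is precisely where you place the ``main obstacle'': well-posedness of the backward adjoint problem in $C([0,t_0];\cH^{4+2\newD,\newD})$, with $\partial_s\varphi_s=-\widehat{\cL}_s\varphi_s$ valued in $\cH^{2+2\newD,\newD}$ so that the pairing with $\widetilde\Delta_s\in\cH^{-(2+2\newD),\newD}$ and the product rule for $s\mapsto\langle\widetilde\Delta_s,\varphi_s\rangle$ make sense. This is not a routine citation: the transport term loses one derivative, so you must establish the smoothing bound $\|P_t\|_{\cH^{3+2\newD,\newD}\to\cH^{4+2\newD,\newD}}\lesssim t^{-1/2}$ for the Gaussian semigroup on these \emph{weighted} spaces (commutators with the weight $(1+|x|^{2\newD})^{-1}$ appear, exactly the $O(\|\widetilde\varrho\|^2)$ terms the paper tracks by hand); the non-local operator $\cG(t,\nu_t,\nu_t)$ must be bounded at the $\cH^{4+2\newD,\newD}$ level with a constant involving $\int|y|^{2\newD}d\nu_t$, which is only controlled pathwise (your appeal to $p'>12\newD$ is misplaced here, since the theorem is a pathwise statement conditional on an arbitrary $\P^{2\newD}$-valued $\nu$; one needs $t\mapsto\int|y|^{2\newD}d\nu_t$ locally integrable a.s., as in the paper's final Gronwall step); and the It\^o--Wentzell shift at the level of $\cH^{-(2+2\newD),\newD}$-valued processes requires justification (the paper invokes Kunita for the analogous step only in the scalar linear-quadratic example). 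Until the backward equation is actually solved at this regularity, the duality identity $\langle\widetilde\Delta_{t_0},\chi\rangle=0$ is not available, so the proof as written is incomplete.
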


Invoking 
Lemma 
\ref{lem:weak:input:cv}
and recalling that 
 strong uniqueness implies weak uniqueness in law, we have the following corollary.

\begin{corollary}
\label{cor:lim:spde}
The sequence $(W,m^n_{\bm{\overline{X}}},\overline{S}^n)_{n \geq 1}$ converges in law on the space $ C([0,T],\R^d) \times 
 C([0,T],\cP^{1}(\R^d)) \times  C([0,T],
\cH^{-(2+2\newD ),\newD })$ to the tuple 
$(W,\mu,S)$ where $S$ solves, for any 
$\phi \in \cH^{4+2\newD ,\newD }$ the equation:
\begin{equation}
\label{eq:spde:cor:lim}
\begin{split}
d \bigl\langle S_{t},\phi
\bigr\rangle 
&= \Bigl[
    \bigl\langle
 S_{t},
 D_x \phi(\cdot) \cdot \widetilde b(t,\cdot,\mu_{t}) \bigr\rangle
 +  \bigl\langle 
 S_{t},\bigl[{\mathcal G}\bigl(t,\mu_{t},\mu_{t}\bigr) \phi \bigr](\cdot)
 \bigr\rangle \Bigr] dt
 \\ 
&\hspace{10pt} + \tfrac12 \bigl\langle S_{t},
\textrm{\rm Tr}[ ( \sigma \sigma^\top 
+ \sigma_{0} \sigma_{0}^\top ) D^2_x \phi ] \bigr\rangle  dt +
\bigl\langle S_{t}, \sigma_{0}^\top D_x \phi 
\bigr\rangle \cdot d {W_{t}}
+ d \xi_{t}(\phi),
\quad t \in [0,T],
\end{split}
\end{equation} 
where $\xi$ is given by 
\eqref{eq:clt:covariance:limit}
and where $S_{0}$ is independent 
of $(W,\mu,\xi)$ and has the same law as $\theta_{0}$ in
\eqref{eq:clt:covariance:limit:initial:condition}. 
\end{corollary}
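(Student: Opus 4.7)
The strategy is a standard tightness + identification + uniqueness package that bundles together the three ingredients developed in the preceding subsections.

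First I would establish joint tightness of the full quadruple $(W, m^n_{\bm{\overline{X}}}, M^n, \overline{S}^n)_{n \geq 1}$ on the product space $C([0,T], \R^{d_0}) \times C([0,T], \cP^1(\R^d)) \times C([0,T], \cH^{-(2+2\newD),\newD})^2$: the first coordinate does not depend on $n$, the second converges almost surely to $\mu$ in $C([0,T], \cP^1(\R^d))$ by Lemma \ref{lem:clt:n1/2} combined with standard McKean-Vlasov propagation of chaos for the auxiliary system $\widehat{\boldsymbol X}$ of \eqref{eq:hat:coupling} (the assumption $p' > 12\newD$ of Theorem \ref{th:CLT:sec:3} supplies the needed uniform integrability), while tightness of the last two coordinates is exactly the content of Proposition \ref{thm:clt:tightness}.

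Next, I would extract an arbitrary weak subsequential limit, say $(W, \nu, \varpi, \zeta, S)$. Almost sure convergence of $m^n_{\bm{\overline{X}}}$ to $\mu$ immediately forces $\nu = \mu$. Lemma \ref{lem:weak:input:cv} identifies the joint limit of $(W, \mu, \overline{S}^n_0, M^n)$ with $(W, \mu, \theta_0, \xi)$: this pins down $\varpi = W$ and equips $\zeta$ with the $(W,\mu)$-conditional Gaussian covariance \eqref{eq:clt:covariance:limit} of $\xi$; it also identifies the limiting initial condition $S_0$ with $\theta_0$, whose independence from $(W, \mu, \xi)$ is inherited from the fact that the initial states $(X^i_0)_{i\ge 1}$ are $\F_0$-measurable and hence independent of $(W, B^1, B^2, \ldots)$. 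The identification lemma stated immediately before the corollary then ensures that $S$ is an adapted, square-integrable solution of \eqref{eq:spde:cor:lim} with paths in $C([0,T], \cH^{-(2+2\newD),\newD})$.

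Finally, I would invoke Theorem \ref{thm:uniqueness:spde} on the limit space, carrying the driving triple $(W, \mu, \xi)$ and the initial condition $\theta_0$: it yields that \eqref{eq:spde:cor:lim} has at most one such adapted solution. Hence all subsequential limits of $(W, m^n_{\bm{\overline{X}}}, \overline{S}^n)$ share a common law, which together with the tightness from the first step promotes subsequential convergence to convergence in law of the full sequence to $(W, \mu, S)$, as claimed. The only delicate bookkeeping is verifying that the weak limit lives on a filtered space rich enough to apply Theorem \ref{thm:uniqueness:spde} in its adapted formulation; this is handled through a Skorokhod representation followed by taking the natural completed filtration generated by $(W, \xi, S_0)$ on the limit probability space. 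I do not expect a genuine analytic obstacle here, since Proposition \ref{thm:clt:tightness}, Lemma \ref{lem:weak:input:cv}, and Theorem \ref{thm:uniqueness:spde} already do the substantive work.
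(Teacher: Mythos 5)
Your proposal is correct and follows essentially the same route as the paper: the paper derives the corollary in one line by combining Proposition \ref{thm:clt:tightness} (tightness), the identification lemma preceding the corollary, Lemma \ref{lem:weak:input:cv}, and Theorem \ref{thm:uniqueness:spde}, noting that strong uniqueness implies uniqueness in law — which is precisely the tightness/identification/uniqueness package you describe, including your Skorokhod-representation remark for passing from pathwise to weak uniqueness.
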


\subsubsection{Reformulation of the limiting equation}
In fact, we can provide another representation
of the term $\xi$, based on the fact that, almost surely, for any $t>0$, $\mu_{t}$ has a density and may be identified with a function, which is the purpose of the following lemma.

\begin{lemma}
\label{lem:density}
With probability 1, for any $t>0$, the probability measure $\mu_{t}$ has a density with respect to the Lebesgue measure on $\R^d$. 
\end{lemma}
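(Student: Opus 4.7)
\textit{Plan.} The strategy is to condition on the common noise $W$, reducing the McKean--Vlasov equation \eqref{def:MKV-conditional} to a time-inhomogeneous non-degenerate It\^o SDE, and then to invoke a localised Girsanov argument to push the Lebesgue absolute continuity of a Gaussian through to $\mu_t$. Since $\mu$ is $W$-measurable by construction, regular conditional distributions provide, for $\PP$-a.e.\ realisation $w$ of $W$, a continuous deterministic flow $(\mu_t^w)_{t \in [0,T]} \in C([0,T];\cP^{p^*}(\R^d))$, and conditional on $\{W=w\}$ the idiosyncratic Brownian motion $B^1$ remains a Brownian motion (by independence of $W$ and $B^1$). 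On this conditioned space the equation \eqref{def:MKV-conditional} becomes
\[
dX_t = b^w(t,X_t)\,dt + \sigma\,dB^1_t + \sigma_0\,dw_t, \qquad X_0 = X^1_0,
\]
with $b^w(t,x) := \widehat b(x,\mu_t^w,D_xU(t,x,\mu_t^w))$. The translation $\widetilde X_t := X_t - \sigma_0 w_t$ removes the common-noise path and yields
\[
d\widetilde X_t = \widetilde b^w(t,\widetilde X_t)\,dt + \sigma\,dB^1_t,
\]
where $\widetilde b^w(t,x) := b^w(t,x+\sigma_0 w_t)$ is, by Assumptions \ref{assumption:A}(1) and \ref{assumption:A}(5) and the boundedness of $D_xU$, Lipschitz and of linear growth in $x$ uniformly in $t$, so the SDE is well-posed and $\widetilde X$ does not explode.

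Next, I apply a localised Girsanov argument using the non-degeneracy of $\sigma$ (Assumption \ref{assumption:A}(2)). For each $N \in \N$, set $\tau_N := \inf\{t \ge 0 : |\widetilde X_t|\ge N\}$; on $[0,\tau_N]$ the drift $\widetilde b^w(\cdot,\widetilde X_\cdot)$ is bounded by a (random) constant depending only on $w$ and $N$, so Novikov's condition for $s\mapsto \sigma^{-1}\widetilde b^w(s,\widetilde X_s)\mathbf 1_{[0,\tau_N]}(s)$ is trivially met. Girsanov's theorem then provides a probability $Q^N$ on $\F_T$, equivalent to $\PP(\cdot\mid W=w)$, under which
\[
B^N_t := B^1_t + \int_0^{t\wedge\tau_N}\sigma^{-1}\widetilde b^w(s,\widetilde X_s)\,ds
\]
is a Brownian motion and $\widetilde X_{t\wedge\tau_N} = X^1_0 + \sigma B^N_{t\wedge\tau_N}$. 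Because $X^1_0$ is $\F_0$-measurable and the Radon--Nikodym derivative only modifies increments on $(0,T]$, $X^1_0$ remains independent of $B^N$ under $Q^N$, so the $Q^N$-law of $X^1_0 + \sigma B^N_t$ is the convolution of $\mu_0$ with the non-degenerate Gaussian $\mathcal{N}(0,t\sigma\sigma^\top)$ and hence admits a Lebesgue density for every $t>0$. Consequently, for every Lebesgue-null $A\subset\R^d$,
\[
Q^N\bigl(\widetilde X_t \in A,\ t<\tau_N\bigr) \le Q^N\bigl(X^1_0 + \sigma B^N_t \in A\bigr) = 0,
\]
and the equivalence of $Q^N$ with $\PP(\cdot\mid W=w)$ on $\F_T$ gives $\PP(\widetilde X_t\in A,\ t<\tau_N\mid W=w)=0$.

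Finally, Lipschitz continuity of $\widetilde b^w$ in $x$ rules out explosion, so $\tau_N\uparrow\infty$ almost surely and letting $N\to\infty$ yields $\PP(\widetilde X_t\in A\mid W=w)=0$ for every Lebesgue-null $A$. Thus the conditional law of $X_t=\widetilde X_t+\sigma_0 w_t$ given $\{W=w\}$ is absolutely continuous with respect to Lebesgue measure for every $t>0$, which is exactly the stated claim for $\PP$-a.e.\ realisation of $W$. I expect the main obstacle to lie not in the SDE density argument itself but in the measure-theoretic bookkeeping underlying the conditioning step: because $\mu$ is defined through a McKean--Vlasov fixed point that couples $\mu$ to $W$, one must verify carefully that on the disintegrated space $B^1$ is still a Brownian motion and that $(X,\mu^w)$ does satisfy the frozen SDE with drift $b^w$. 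Once this reduction is justified, the remaining Girsanov-plus-non-degeneracy argument is routine.
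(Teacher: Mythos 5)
Your proof is correct and follows essentially the same route as the paper: translate by $-\sigma_0 W_t$ to remove the common noise, condition on $W$ so that the equation becomes a non-degenerate It\^o SDE with frozen coefficients, and apply Girsanov to compare with the Gaussian convolution $\mu_0 * \mathcal{N}(0,t\sigma\sigma^\top)$. The only difference is that you treat the drift as merely of linear growth and therefore localise with stopping times, whereas under Assumption \ref{assumption:C}(1) combined with the boundedness of $D_xU$ the drift $\widehat b(x,m,D_xU(t,x,m))$ is in fact uniformly bounded, so the paper applies Girsanov directly (and thereby also obtains strict positivity of the density, which your one-sided null-set argument does not give but which the lemma does not require).
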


\begin{proof}
Let $Y_t = X_t -\sigma_0 W_t$. Then
\begin{align*}
dY_t = \widetilde{b}(t,Y_t + \sigma_0 W_t,\mu_t)dt + \sigma dB_t.
\end{align*}
Let $\widetilde\mu_t =\cL(Y_t \, | \, (W_s)_{s \le t})={\cL(Y_t \, | \, W)}$ denote the flow of conditional laws of $Y$ given $W$. Given $W$, the process $Y$ solves a deterministic SDE with uniformly bounded drift. By Girsanov's theorem, the law $\widetilde\mu_t$ has strictly positive density with respect to Lebesgue measure, almost surely. Hence, so too does $\mu_t = \cL(X_t \, | \, (W_s)_{s \le t})$, which is simply a translation of $\widetilde\mu_t$.
\end{proof}

We then have the following result.

\begin{proposition}
Let $(\beta_{t}:=(\beta^1_{t},\ldots,\beta^{d_{0}}_{t}))_{t \in [0,T]}$ be a $W$-independent $d_{0}$-tuple of independent cylindrical Wiener processes with values in $L^2(\R^d)$. Then, the collection 
\begin{equation*}
\biggl( \biggl(
 \int_{0}^t 
\bigl( \sqrt{\mu}_{s} \sigma^{\top} D_x \phi \bigr) \cdot d\beta_{s}\biggr)_{\phi \in \cH^{2+2\newD ,\newD }}
 \biggr)_{t \in [0,T]}
\end{equation*}
defines a process with values in $ C([0,T],\cH^{-(2+2\newD ),\newD })$ which has the same joint law with $(W,\mu)$ as $\xi$ in 
\eqref{eq:clt:covariance:limit}.
\end{proposition}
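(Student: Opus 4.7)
The plan is to realize the candidate process $\widetilde\xi_t(\phi):=\int_0^t (\sqrt{\mu_s}\sigma^\top D_x\phi)\cdot d\beta_s$ as a continuous $\cH^{-(2+2\newD),\newD}$-valued process, and then to identify its joint law with $(W,\mu)$ via Gaussian characterization, exploiting that $\beta$ is independent of $(W,\mu)$ while $\mu$ is $W$-measurable.

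First, I would fix $\phi\in\cH^{2+2\newD,\newD}$ and verify well-definedness of the scalar integral. The Sobolev embedding \eqref{eq:embedding:1} gives $|D_x\phi(x)|\le C\|\phi\|_{2+2\newD,\newD}(1+|x|^\newD)$, so in each coordinate
\[ \|\sqrt{\mu_s}(\sigma^\top D_x\phi)_i\|_{L^2(\R^d)}^2 = \bigl\langle\mu_s,((\sigma^\top D_x\phi)_i)^2\bigr\rangle \le C\|\phi\|_{2+2\newD,\newD}^2\bigl\langle\mu_s,(1+|\cdot|^\newD)^2\bigr\rangle, \]
which is $ds$-integrable on $(0,T]$ almost surely because $\E[\sup_t\langle\mu_t,|\cdot|^{2\newD}\rangle]<\infty$ (using $p'>2\newD$). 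Since $\beta$ is independent of $(W,\mu)$ and the integrand is $\sigma(W,\mu)$-measurable, the Itô integral against the $L^2(\R^d)$-cylindrical Wiener process $\beta$ is a well-defined continuous square-integrable scalar martingale.

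Next, conditioning on $(W,\mu)$ freezes the integrand, making each $\widetilde\xi_t(\phi)$ a conditionally centered Gaussian random variable. The conditional Itô isometry then yields, for $s\le t$ and $\phi_1,\phi_2\in\cH^{2+2\newD,\newD}$,
\[ \E\bigl[\widetilde\xi_t(\phi_1)\widetilde\xi_s(\phi_2)\,\bigl|\,W,\mu\bigr] = \int_0^s\bigl\langle\mu_r,\sigma\sigma^\top D_x\phi_1\cdot D_x\phi_2\bigr\rangle dr, \]
which matches the prescribed covariance \eqref{eq:clt:covariance:limit}; the conditioning on $(W,\mu)$ coincides with conditioning on $W$ alone since $\mu$ is $W$-measurable.

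Finally, to assemble $\widetilde\xi$ as a continuous process in $\cH^{-(2+2\newD),\newD}$, I would fix a complete orthonormal system $(\psi_p)_{p\ge1}$ in $\cH^{2+2\newD,\newD}$, set $\widetilde\xi_t:=\sum_p\widetilde\xi_t(\psi_p)\psi_p^*$ with $\psi_p^*$ the dual basis, and use Doob's inequality together with the Hilbert-Schmidt embedding \eqref{eq:embedding:3} (factored, e.g., as $\cH^{2+2\newD,\newD}\hookrightarrow_{\mathrm{HS}}\cH^{1+\newD,2\newD}$) together with the moment bounds on $\mu$ to obtain $\E[\sum_p\sup_{t\in[0,T]}|\widetilde\xi_t(\psi_p)|^2]<\infty$. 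This is entirely parallel to the construction of the martingale $M^n$ in the proof of Proposition \ref{thm:clt:tightness}. Combined with the preceding steps, Gaussian characterization then forces $(W,\mu,\widetilde\xi)$ and $(W,\mu,\xi)$ to have the same joint law. The main obstacle is precisely this Hilbert-valued assembly: the scalar integrals are defined pointwise in $\phi$, and promoting them to an $\cH^{-(2+2\newD),\newD}$-valued process with continuous sample paths requires careful bookkeeping of the Hilbert-Schmidt embeddings and the polynomial moment bounds on $\mu$, though no new idea beyond those already used for $M^n$.
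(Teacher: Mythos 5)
Your proposal is correct and follows essentially the same route as the paper: verify square-integrability of $(t,x)\mapsto\sqrt{\mu_t(x)}\,D_x\phi(x)$ via the embedding \eqref{eq:embedding:1} and the bound $p'>2\newD$, then identify the law by the (conditional) covariance computation. The paper's proof is just a terser version of yours, leaving the Hilbert-valued assembly and the conditional Gaussian identification implicit as standard.
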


In other words, the above statement says that, in \eqref{eq:lim:spde:for:clt}, the term 
$\zeta_{t}(\phi)$ may be written as 
$\int_{0}^t (\sqrt{\mu}_{s} \sigma^{\top} D_x \phi) \cdot
d \beta_{s}$.

\begin{proof}
The proof follows from a standard computation of  covariance. To do so, it suffices to observe 
from 
\eqref{eq:embedding:1}
  that, 
for any $\phi \in \cH^{2+2\newD ,\newD }$, 
the function 
$\R^d \ni x \mapsto \vert D_x \phi(x) \vert /
(1+ \vert x \vert^\newD )$ is bounded. As a result, 
the function 
$(t,x) \mapsto \sqrt{\mu_{t}(x)} D_x \phi(x)$ is square-integrable, since $p'$ 
in Assumption \textbf{\bf A}
is greater than $2\newD $.
\end{proof}  

\subsection{Proof of the fluctuation theorem for MFG} \label{se:MFGfluctuationproof}

Postponing the proof of the uniqueness Theorem \ref{thm:uniqueness:spde}, we are now in position to the complete the proof of Theorem \ref{th:CLT:sec:3}. 

There are two steps. The first one is to observe that the limiting SPDE \eqref{eq:spde:cor:lim} may be rewritten in terms of  $\A$ in \eqref{eq:cA:clt}, simply by expanding the definition of $\mathcal{G}$. Hence, the SPDE \eqref{eq:spde:cor:lim} may be easily identified with \eqref{eq:spde:clt:intro}. 

Then, it remains to show that $(D^n_t := \overline{S}^n_t - S^n_t)_{t \in [0,T]}$ converges to zero in law on the space $C([0,T],\cH^{-(2+2\newD ),\newD })$. In fact, we will show that $\E[\sup_{t \in [0,T]}\|D^n_t\|_{-(2+2\newD ),\newD }]$ tends to $0$ as $n$ tends to $\infty$. Notice first that
\begin{equation*}
\begin{split}
\E\Bigl[\sup_{t \in [0,T]}\|D^n_t\|_{-(2+2\newD ),\newD }\Bigr]
&= \E\Bigl[\sup_{t \in [0,T]}
\sup_{\| \phi \|_{2+2\newD ,\newD } \leq 1}
\langle D^n_{t},\phi \rangle \Bigr]
\\
&\leq \frac1{\sqrt{n}} \E\biggl[\sup_{t \in [0,T]}\sup_{\| \phi \|_{2+2\newD ,\newD } \leq 1}
\biggl\vert \sum_{i=1}^n \phi(X_{t}^i) - \phi(\overline X_{t}^i) \biggr\vert \biggr].
\end{split}
\end{equation*}
For $\phi$ satisfying $\| \phi \|_{2+2\newD ,\newD } \leq 1$, 
we know from Sobolev's embeddings that
the function $\R^d \ni x \mapsto \vert D_x \phi(x) \vert / (1+ \vert x \vert^\newD )$ is bounded, independently of $\phi$. Therefore, there exists a constant $C$, independent of $n$, such that:
\begin{equation*}
\begin{split}
\E\Bigl[\sup_{t \in [0,T]}\|D^n_t\|_{-(2+2\newD ),\newD }\Bigr]
&\leq \frac{C}{\sqrt{n}} \E\biggl[\sup_{t \in [0,T]}
\biggl\vert \sum_{i=1}^n 
\Bigl( 1+ \vert X_{t}^i \vert^\newD  + 
\vert \overline X_{t}^i) \vert^\newD 
\Bigr) \bigl\vert X_{t}^i - \overline X_{t}^i \bigr\vert \biggr].
\end{split}
\end{equation*}
By the Cauchy-Schwarz inequality, we get:
\begin{equation*}
\begin{split}
&\E\Bigl[\sup_{t \in [0,T]}\|D^n_t\|_{-(2+2\newD ),\newD }\Bigr]
\\
&\hspace{15pt} \leq C \sqrt{n} \biggl(  \E\biggl[ \frac1n \sum_{i=1}^n 
\Bigl( 1+ \| X^i \|_{{\infty}}^{2\newD } + \| \overline X^i \|_{{\infty}}^{2\newD } \Bigr) \biggr] \biggr)^{1/2}
\biggl(  \E\biggl[ \frac1n \sum_{i=1}^n 
 \bigl\| X^i - \overline X^i \bigr\|_{{\infty}}^2 \biggr] \biggr)^{1/2}.
\end{split}
\end{equation*}
The first expectation is bounded uniformly in $n$, and we  may use \eqref{def:estimate-|X-Xbar|} to conclude that the right-hand side tends to $0$. \hfill\qed

\subsection{Proof of the uniqueness Theorem \ref{thm:uniqueness:spde}}
\label{subse:proof:!:spde}
 
In order to prove uniqueness of the solution to 
\eqref{eq:lim:spde:for:clt}, we use the same argument as in \cite{kurtz-xiong}. To do so, we
 take two solutions $S$ and $S'$ of the limiting equation, with the same initial condition and with square-integrable and continuous paths with values in 
$\cH^{-(2+2\newD ),\newD }$ and we consider an orthonormal basis $(f_{k})_{k \geq 1}$ of the space $\cH^{-(4+2\newD ),\newD }$. Letting 
$(\delta S_{t} = S_{t}- S_{t}')_{t \in [0, T]}$, we know that, for any $k \geq 1$, 
\begin{equation*}
\begin{split}
d \bigl\langle \delta S_{t},f_{k}
\bigr\rangle 
&= \Bigl[
    \bigl\langle
 \delta S_{t},
 D_x f_{k}(\cdot) \cdot \widetilde b(t,\cdot,\nu_{t}) \bigr\rangle
 +  \bigl\langle 
 \delta S_{t},\bigl[\cG(t,\nu_{t},\nu_{t}) f_{k} \bigr](\cdot)
 \bigr\rangle \Bigr] dt
 \\ 
&\hspace{15pt} +  \tfrac12 \bigl\langle \delta S_{t}, 
\textrm{\rm Tr}\bigl[ \bigl( \sigma \sigma^\top 
+ \sigma_{0} \sigma_{0}^\top \bigr) D^2_x  f_k   \bigr\rangle \bigr] \bigr\rangle dt + 
\bigl\langle \delta S_{t}, \sigma_{0}^\top D_x f_{k} 
\rangle \cdot d\varpi_{t}, \quad t \in [0,T],
\end{split}
\end{equation*} 
with $\delta S_{0}=0$. 
Then, recalling that $\varpi$ has dimension $d_0$,
\begin{equation}
\label{eq:uniqueness}
\begin{split}
d  
\bigl\langle \delta S_{t},f_{k}\bigr\rangle^2
&= 
2
\bigl\langle \delta S_{t},f_k\bigr\rangle
\Bigl(
    \bigl\langle
 \delta S_{t},
 D_x f_{k}(\cdot) \cdot \widetilde b(t,\cdot,\nu_{t}) \bigr\rangle
 +  \bigl\langle 
 \delta S_{t},\bigl[\cG(t,\nu_{t},\nu_{t}) f_{k} \bigr](\cdot)
 \bigr\rangle \Bigr) dt
\\
&\hspace{5pt} +
\bigl\langle \delta S_{t},f_{k}\bigr\rangle
\bigl\langle \delta S_{t}, \textrm{\rm Tr} \bigl[ \bigl( \sigma \sigma^\top +
\sigma_{0} \sigma_{0}^\top \bigr) 
 D^2_x f_{k} \bigr] \bigr\rangle  dt + 
\sum_{i=1}^{d_{0}}
\bigl\langle \delta S_{t},
\bigl( \sigma_{0}^\top D_x f_{k} \bigr)_{i}\bigr\rangle^2
 dt
\\
&\hspace{5pt} +
2
\bigl\langle \delta S_{t}, f_{k} 
\rangle
\bigl\langle \delta S_{t}, \sigma_{0}^\top D_x f_{k} 
\rangle \cdot d\varpi_{t}, \quad t \in [0,T]. 
\end{split}
\end{equation}
At this stage, the problem is the same as in \cite{kurtz-xiong}, namely the right hand side depends on higher derivatives of the basis function $f_{k}$. To handle these higher derivatives, we proceed as follows. 
\vskip 4pt
 
\noindent\textbf{Step 1:}
As a first step, we tackle the second order term in 
\eqref{eq:uniqueness}. 
Defining the operator
$\L_{0} = \textrm{\rm Tr}[ (\sigma \sigma^\top + 
\sigma_{0} \sigma_{0}^\top) D^2_x ( \cdot) ]$
and summing over $k \geq 1$, we get
\begin{equation*}
\begin{split}
&\sum_{k \geq 1} \bigl\langle \delta S_{t},f_{k}
\bigr\rangle \bigl\langle \delta S_{t},\L_{0}   f_{k} \bigr\rangle
= \bigl\langle \delta S_{t},\L_{0}^* \delta S_{t}
\bigr\rangle_{{-(4+2\newD ),\newD }}, 
\end{split}
\end{equation*}
where $\langle \cdot,\cdot \rangle_{{-(4+2\newD ),\newD }}$ stands for the 
inner product in $\cH^{-(4+2\newD ),\newD }$ and 
$\cL_{0}^* \delta S_{t}$ is the element of 
$\cH^{-(4+2\newD ),\newD }$ defined as 
$\langle \cL_{0}^* \delta S_{t},\phi \rangle 
= \langle  \delta S_{t},\cL_{0} \phi \rangle$
for all $\phi \in \cH^{4+2\newD ,\newD }$ (which makes sense since $\delta S_{t} \in \cH^{-(2+2\newD ),\newD }$).
We then make use of Riesz' representation theorem. For an element $\varrho$ in    
$\cH^{-(2+2\newD ),\newD } \subset \cH^{-(4+2\newD ),\newD }$, we call $\widetilde{\varrho}$ its representative element in 
$\cH^{4+2\newD ,\newD }$, namely
\begin{equation*}
\forall \phi \in \cH^{4+2\newD ,\newD }, \quad 
\langle \phi , 
\varrho \rangle = 
\bigl\langle \phi, \widetilde{\varrho} \bigr\rangle_{{4+2\newD ,\newD }},
\end{equation*}
where $\langle \cdot, \cdot \rangle$ denotes the duality bracket while $\langle \cdot,\cdot \rangle_{{4+2\newD ,\newD }}$ stands for the 
inner product in $\cH^{4+2\newD ,\newD }$.
 We then take an element $\varrho \in \cH^{-(2+2\newD ),\newD }$ such that $\widetilde{\varrho}$ belongs to $\cH^{6+2\newD ,\newD }$. 
 Since $\cL_{0}^* \varrho \in \cH^{-(4+2\newD ),\newD }$, we get
\begin{align*}
\bigl\langle \varrho,\cL_{0}^* \varrho \bigr\rangle_{{-(4+2\newD ),\newD }}
&=\bigl\langle \widetilde \varrho,\cL_{0}^* \varrho \bigr\rangle
= \bigl\langle \cL_{0} \widetilde \varrho, \varrho \bigr\rangle
= \bigl\langle \cL_{0} \widetilde \varrho,\widetilde \varrho  \bigr\rangle_{{4+2\newD ,\newD }} \\
&=  \sum_{i,j=1}^d 
(\sigma \sigma^\top
+
\sigma_{0} \sigma_{0}^\top
)_{i,j}
\sum_{\vert {\boldsymbol k} \vert \leq 4+2\newD } \int_{\R^d} \frac{D^{\boldsymbol k} \widetilde \varrho(x) 
D^2_{x_i,x_j} D^{\boldsymbol k} \widetilde \varrho(x) }{1+ \vert x \vert^{2 \newD }} dx
\\
&= - \sum_{i,j=1}^d 
(\sigma \sigma^\top
+
\sigma_{0} \sigma_{0}^\top
)_{i,j}
\sum_{\vert {\boldsymbol k} \vert \leq 4+2\newD } \int_{\R^d} \frac{D_{x_i}D^{\boldsymbol k} \widetilde \varrho(x) 
D_{x_j} D^{\boldsymbol k} \widetilde \varrho(x) }{1+ \vert x \vert^{2 \newD }} dx
\\
&\hspace{15pt} + \frac12 \sum_{{i,j}=1}^d 
(\sigma \sigma^\top
+
\sigma_{0} \sigma_{0}^\top
)_{i,j}
\sum_{\vert {\boldsymbol k} \vert \leq 4+2\newD } \int_{\R^d} 
D_{x_i,x_j}^2 \bigl( (1 + \vert x \vert^{2\newD })^{-1}
\bigr)  (D^{\boldsymbol k} \widetilde \varrho)^2  (x)  dx
\\
&= - \bigl( \| \sigma^\top D_x \widetilde{\varrho} \|^2_{{4+2\newD ,\newD }} 
+ \| \sigma_{0}^\top D_x \widetilde{\varrho} \|^2_{{4+2\newD ,\newD }}
\bigr)  
+ O \bigl(  
\| \widetilde{\varrho} \|^2_{{4+2\newD ,\newD }}
\bigr),
\end{align*}
where $\| \sigma^\top D_x \widetilde{\varrho} \|^2_{{4+2\newD ,\newD }}$ is a shortened notation for 
$\sum_{i=1}^{d} \| (\sigma^\top D_x\widetilde{\varrho})_{i}\|^2_{{4+2\newD ,\newD }}$ 
(and similarly for the term 
$\| \sigma^\top_{0} D_x \widetilde{\varrho} \|^2_{{4+2\newD ,\newD }}$)
and $O(\cdot)$ is the Landau notation.
\vskip 4pt

\noindent\textbf{Step 2:}
We now handle the quadratic term in 
\eqref{eq:uniqueness}. Proceeding as before, we have
\begin{equation*}
\sum_{k \geq 1}
\sum_{i=1}^{d_{0}}
\bigl\langle \delta S_{t}, \bigl( \sigma_{0}^\top 
D_x f_{k}\bigr)_{i}\bigr\rangle^2
= \sum_{i=1}^{d_{0}} \| (\sigma_{0}^\top D_x)_{i}^* \delta S_{t} \|^2_{{-(4+2\newD ),\newD }},
\end{equation*}
where $\langle (\sigma_{0}^\top D_x)_{i}^* \delta S_{t},\phi \rangle 
= \langle \delta S_{t}, (\sigma_{0}^\top D_x)_{i} \phi \rangle$ for 
all $\phi \in \cH^{4+2\newD ,\newD }$. 

Choose now $\varrho \in \cH^{-(2+2\newD ),\newD }$ such that 
$\widetilde \varrho \in \cH^{6+2\newD ,\newD }$. Then,
for any smooth function $\phi$ with compact support, 
\begin{equation*}
\begin{split}
\bigl\langle (\sigma_{0}^\top D_x)_{i}^* \varrho, \phi \bigr\rangle 
&= \bigl\langle \widetilde{\varrho}, (\sigma_{0}^\top D_x)_{i}
\phi \bigr\rangle_{{4+2\newD ,\newD }}
\\
&=
\sum_{\vert {\boldsymbol k} \vert \leq 4+2\newD } \int_{\R^d} \frac{D^{\boldsymbol k} \widetilde \varrho(x) 
(\sigma_{0}^\top D_x)_{i} D^{\boldsymbol k} \phi(x) }{1+ \vert x \vert^{2 \newD }} dx
\\
&= - \sum_{\vert {\boldsymbol k} \vert \leq 4+2\newD } \int_{\R^d} \frac{D^{\boldsymbol k} [(\sigma_{0}^\top D_x)_{i} \widetilde \varrho](x) 
D^{\boldsymbol k} \phi(x) }{1+ \vert x \vert^{2 \newD }} dx 
\\
&\hspace{15pt}
- 
\sum_{\vert {\boldsymbol k} \vert \leq 4+2\newD } \int_{\R^d} 
(\sigma_{0}^\top D_x)_{i} \bigl( (1+ \vert x \vert^{2 \newD })^{-1} \bigr) 
D^{\boldsymbol k}  \widetilde \varrho(x) 
D^{\boldsymbol k} \phi(x) 
dx.
\end{split}
\end{equation*}
Therefore,
\begin{equation*}
\bigl\langle (\sigma_{0}^\top D_x)_{i}^* \varrho, \phi \bigr\rangle
= - \bigl\langle (\sigma_{0}^\top D_x)_{i} \widetilde \varrho, \phi 
\bigr\rangle_{{4+2\newD ,\newD }}
+ O \bigl( \| \widetilde \varrho \|_{{4+2\newD ,\newD }} \| \phi \|_{{4+2\newD ,\newD }}
\bigr), 
\end{equation*}
which shows that $(\sigma_{0}^\top D_x)_{i}^* \varrho$ belongs to 
$\cH^{-(4+2\newD ),\newD }$ and that the above equality holds for any $\phi \in \cH^{4+2\newD ,\newD }$. Hence,
\begin{equation}
\label{eq:2:ibp:clt}
\begin{split}
&\| (\sigma_{0}^\top D_x)_{i}^* \varrho\|_{{-(4+2\newD ),\newD }}
\\
&= \sup_{\| \phi \|_{{4+2\newD ,\newD }} \leq 1}
\Bigl[ - \bigl\langle (\sigma_{0}^\top D_x)_{i} \widetilde \varrho, \phi 
\bigr\rangle_{{4+2\newD ,\newD }}
+ O \bigl( \| \widetilde \varrho \|_{{4+2\newD ,\newD }} \| \phi \|_{{4+2\newD ,\newD }}
\bigr) \Bigr]
\\
&= \| (\sigma_{0}^\top D_x)_{i} \widetilde \varrho\|_{{4+2\newD ,\newD }}
+ O \bigl( \| \widetilde \varrho \|_{{4+2\newD ,\newD }}
\bigr). 
\end{split}
\end{equation}

\noindent\textbf{Step 3:} Lastly, we handle the first term in the right-hand side of 
\eqref{eq:uniqueness}. We have
\begin{equation*}
\begin{split}
\sum_{k \geq 1}
\bigl\langle  \delta S_{t},f_{k}\bigr\rangle
    \bigl\langle
 \delta S_{t},
 D_x f_{k}(\cdot) \cdot \widetilde b(t,\cdot,\nu_{t}) \bigr\rangle
&= 
\sum_{i=1}^d
\sum_{k \geq 1}
\bigl\langle  \delta S_{t},f_{k}\bigr\rangle
    \bigl\langle D_{x_i}^*
    \bigl( \Pi_{\widetilde b_{i}(t,\cdot,\nu_{t})}
    \delta S_{t} \bigr), 
 f_{k}
 \bigr\rangle
 \\
& = 
\sum_{i=1}^d
\bigl\langle  \delta S_{t},D_{x_i}^*
    \bigl( \Pi_{\widetilde b_{i}(t,\cdot,\nu_{t})}
    \delta S_{t} \bigr)
 \bigr\rangle , 
\end{split}
\end{equation*}
where, for $\varrho \in \cH^{-(2+2\newD ),\newD }$, $\Pi_{\widetilde b_{i}(t,\cdot,\nu_{t})} \varrho$ is the element of $\cH^{-(4+2\newD ),\newD }$ defined by
\begin{equation*}
\begin{split}
\bigl\langle \Pi_{\widetilde b_{i}(t,\cdot,\nu_{t})} \varrho,
\phi \bigr\rangle 
&= \bigl\langle \varrho, \widetilde b_{i}(t,\cdot,\nu_{t})
\phi \bigr\rangle, \quad \text{ for } \phi \in \cH^{4+2\newD ,\newD }.
\end{split}
\end{equation*}

As above, we investigate the above term 
when $\varrho \in \cH^{-(2+2\newD ),\newD }$
satisfies $\widetilde \varrho \in \cH^{6+2\newD ,\newD }$. 
Since the derivatives of $\widetilde b_{i}$ are bounded, it is  easily verified that $\widetilde b_{i}(t,\cdot,\nu_{t})
\phi$ belongs to $\cH^{4+2\newD ,\newD }$ which guarantees that the above right-hand side is well defined. In fact, when $\phi \in \cH^{5+2\newD ,\newD }$, 
\begin{equation*}
\begin{split}
\bigl\langle \Pi_{\widetilde b_{i}(t,\cdot,\nu_{t})} \varrho,
D_{x_i}\phi \bigr\rangle 
&= \bigl\langle \widetilde \varrho, \widetilde b_{i}(t,\cdot,\nu_{t})
D_{x_i} \phi \bigr\rangle_{{4+2\newD ,\newD }}.
\end{split}
\end{equation*}
Since the derivatives of $\widetilde b_{i}$ up to the order $5+2\newD $ are bounded, the right-hand side is bounded by $O( \| D_{x_i} \widetilde{\varrho} \|_{{4+2\newD ,\newD }} + \| \widetilde{\varrho} \|_{{4+2\newD ,\newD }}) \| \phi \|_{{4+2\newD ,\newD }}$. In particular, $D_{x_i}^* ( \Pi_{\widetilde b_{i}(t,\cdot,\nu_{t})} \varrho)$ belongs to the space 
$\cH^{-(4+2\newD ),\newD }$ and 
\begin{equation*}
\bigl\| 
D_{x_i}^* ( \Pi_{\widetilde b_{i}(t,\cdot,\nu_{t})} \varrho) \bigr\|_{{-(4+2\newD ),\newD }}
= O \bigl( \| D_{x_i} \widetilde{\varrho} \|_{{4+2\newD ,\newD }} + \| \widetilde{\varrho} \|_{{4+2\newD ,\newD }}\bigr). 
\end{equation*}
Then,
\begin{equation}
\label{eq:3:ibp:clt}
\begin{split}
 \Bigl\vert
\bigl\langle  \varrho, 
D_{x_i}^*
\bigl( \Pi_{\widetilde{b}_{i}(t,\cdot,\nu_{t})} \varrho\bigr)
\bigr\rangle_{\cH^{-(4+2\newD ),2\newD }}
\Bigr\vert
&= O \bigl( \| D_{x_i} \widetilde{\varrho} \|_{{4+2\newD ,\newD }}\| \widetilde{\varrho} \|_{{4+2\newD ,\newD }} + \| \widetilde{\varrho} \|_{{4+2\newD ,\newD }}^2\bigr)
. 
\end{split}
\end{equation}
\vskip 4pt

\noindent\textbf{Step 4:}
We next consolidate the first three steps into an estimate on the $dt$ term of the right-hand side of \eqref{eq:uniqueness}, or rather the sum thereof over $k \in \N$, which can be rewritten as ${\mathcal F}(\delta S_{t})$, where
\begin{equation*}
\begin{split}
{\mathcal F}(\varrho) &=   \Bigr[2
\sum_{i=1}^d
\bigl\langle \varrho ,D_{x_i}^* \bigl( \Pi_{\widetilde{b}_{i}(t,\cdot,\nu_{t})}
\varrho \bigr) \bigr\rangle_{{-(4+2\newD ),\newD }}
\\
&\hspace{15pt}
+ 2 \bigl\langle \varrho,\cG(t,\nu_{t},\nu_{t})^* \varrho\bigr\rangle_{{-(4+2\newD ),\newD }}
+  \bigl\langle \varrho,\cL_{0}^* \varrho \bigr\rangle_{{-(4+2\newD ),\newD }} \Bigr] 
 +
\sum_{i=1}^{d_{0}} \|
(\sigma_{0}^\top D_x)_{i}^*
 \varrho \|^2_{{-(4+2\newD ),\newD }}. 
\end{split}
\end{equation*}
Here, following the proof of 
\eqref{eq:conclusion:3rdstep}, 
$\cG(t,\nu_{t},\nu_{t})^* \varrho$
is in $\cH^{-(4+2\newD ),\newD }$ and 
$\| \cG(t,\nu_{t},\nu_{t})^* \varrho\|_{-(4+2\newD ),\newD } \leq c 
( 1 +\int_{\RR^d} \vert y \vert^{2\newD } d\nu_{t}(y))
\|  \varrho\|_{-(4+2\newD ),\newD }$. Returning to 
\eqref{eq:uniqueness} and
 collecting 
\eqref{eq:2:ibp:clt}
and
\eqref{eq:3:ibp:clt}, the second point is that 
${\mathcal F}(\varrho)$ 
satisfies, for $\varrho \in \cH^{-(2+2\newD ),\newD }$
such that $\widetilde \varrho \in \cH^{6+2\newD ,\newD }$,
\begin{equation*}
\begin{split}
{\mathcal F}(\varrho) &= - \| \sigma^\top D_x \widetilde \varrho\|^2_{{4+2\newD ,\newD }}
+ O \bigl( \| \widetilde \varrho \|_{{4+2\newD ,\newD }}
\| D_x \widetilde \varrho\|_{{4+2\newD ,\newD }}
+
\| \widetilde \varrho \|_{{4+2\newD ,\newD }}^2
\bigr)
\\
&\hspace{30pt} + 2  \bigl\langle 
 \varrho,   \cG(s,\nu_{s},\nu_{s})^* \varrho 
 \bigr\rangle_{{-(4+2\newD ),\newD }}. 
\end{split}
\end{equation*}
Using the fact that $\sigma$ is non-degenerate
together with the corresponding form 
of 
\eqref{eq:conclusion:3rdstep}, this yields
\begin{equation}
\label{eq:conclusion:4thstep}
\begin{split}
&{\mathcal F}(\rho) \leq C   \| \varrho \|_{{-(4+2\newD ),\newD }}^2 + 
C \biggl[
\| \varrho \|_{{-(4+2\newD ),\newD }}^2
\biggl( \int_{\RR^d} (1+ |y|^{2\newD }) d\nu_{s}(y) 
\biggr)^{1/2}
  \biggr]. 
\end{split}
\end{equation}
\vskip 4pt

\noindent\textbf{Step 5:} We now consider the general case when $\varrho \in  \cH^{-(2+2\newD ),\newD } \subset \cH^{-(4+2\newD ),\newD }$ and 
$\widetilde{\varrho}$ just belongs to $\cH^{4+2\newD ,\newD }$. Let $(\eta_{n})_{n \geq 1}$ be a sequence of even mollifiers, and let
$(\varrho_{n})_{n \geq 1}$ be the sequence of elements in $\cH^{-(2+2\newD ),\newD }$ defined by
\begin{equation*}
\langle \varrho_{n},\phi \rangle 
= \bigl\langle \varrho, \phi * \eta_{n}
\bigr\rangle, 
\end{equation*}
where $*$ denotes the usual convolution product.
Let $\widehat \varrho$ and $\widehat \varrho_{n}$ denote the elements of $\cH^{2+2\newD ,\newD }$ such that, for all $\phi \in \cH^{2+2\newD ,\newD }$,
\begin{equation*}
\bigl\langle 
\widehat{\varrho},\phi
\bigr\rangle_{\cH^{2+2\newD ,\newD }}
= \langle 
{\varrho},\phi
\rangle, \quad 
\bigl\langle 
\widehat{\varrho}_{n},\phi
\bigr\rangle_{{2+2\newD ,\newD }}
= \langle 
{\varrho}_{n},\phi
\rangle.
\end{equation*}
Then,
\begin{equation*}
\langle \rho_{n}, \phi \rangle
= \langle  \varrho, \phi * \eta_{n} \rangle
= \bigl\langle \widehat \varrho , 
\phi * \eta_{n} \bigr\rangle_{{2+2\newD ,\newD }}.
\end{equation*}
We observe that 
\begin{equation*}
\begin{split}
\|\phi * \eta_{n} \|_{{2+2\newD ,\newD }}^2
&=
\sum_{\vert {\boldsymbol k} \vert 
\leq 2 + 2\newD }
\int_{\R^d} \frac{\vert D^{\boldsymbol k} 
(\phi * \eta_{n})(x) \vert^2}{1+ \vert x \vert^{2 \newD }} dx
\\
&= \sum_{\vert {\boldsymbol k} \vert 
\leq 2 + 2\newD }
\int_{\R^d} \frac{\vert (D^{\boldsymbol k} 
\phi * \eta_{n})(x) \vert^2}{1+ \vert x \vert^{2 \newD }} dx
\\
&\leq \sum_{\vert {\boldsymbol k} \vert 
\leq 2 + 2\newD }
\int_{\R^d} \frac{1}{1+ \vert x \vert^{2\newD }}
\biggl( \int_{\R^d}
\vert D^{\boldsymbol k} \phi
(y) \vert^2 \eta_{n}(x-y) dy 
\biggr) dx
\\
&= \sum_{\vert {\boldsymbol k} \vert 
\leq 2 + 2\newD }
\int_{\R^d} \frac{\vert 
 D^{\boldsymbol k} \phi
(y)
\vert^2}{1+ \vert y \vert^{2\newD }}
\biggl( \int_{\R^d}
\frac{\eta_{n}(x-y) (1 + \vert y \vert^{2\newD })}{1+ \vert x \vert^{2\newD }} dx 
\biggr) dy \\
	&\leq C \| \phi \|^2_{\cH^{2+2\newD ,\newD }},
\end{split}
\end{equation*}
for a constant $C$ independent of $n$. For a given $\epsilon >0$, we choose 
$\widehat{\varrho}'$ in $ C^{\infty}_{c}(\RR^d)$ such that 
$\| \widehat{\varrho}' - 
\widehat{\varrho} \|_{\cH^{2+2\newD ,\newD }}
\leq \epsilon$ and we get 
\begin{equation}
\label{eq:widehat:rho:eps:n}
\bigl\vert \langle \widehat\varrho',\phi * \eta_{n} \rangle_{2+2\newD ,\newD } - 
\bigl\langle
\widehat{\varrho},\phi * \eta_{n}\bigr\rangle_{2+2\newD ,\newD }
\bigr\vert \leq C \epsilon \| \phi \|_{\cH^{2+2\newD ,\newD }}. 
\end{equation}
Now,
\begin{equation*}
\begin{split}
&\bigl\langle
\widehat{\varrho}',\phi * \eta_{n} - \phi\bigr\rangle 
\\
&= 
\sum_{\vert {\boldsymbol k} \vert \leq 4+2\newD } \int_{\R^d}
\frac{1}{1+ \vert y \vert^{2\newD }}
D^{\boldsymbol k} \phi(y) 
\biggl( \int_{\RR^d}
\eta_{n}(y-x)
\Bigl[
\frac{1 + \vert y \vert^{2\newD }}{1 + \vert x \vert^{2\newD }}
 D^{\boldsymbol k} \widehat{\varrho}'(x)
-
 D^{\boldsymbol k} \widehat{\varrho}'(y)\Bigr]dx 
 \biggr) dy.
\end{split}
\end{equation*}
Since $\widehat{\rho}'$ has a compact support, we deduce that there exists a constant $C_{\epsilon}$ such that 
\begin{equation}
\label{eq:widehat:rho:eps:n:2}
\bigl\vert \bigl\langle
\widehat{\varrho}',\phi * \eta_{n} - \phi\bigr\rangle 
\bigr\vert \leq \frac{C_{\epsilon}}{n}
\| \phi \|_{2+2\newD ,\newD }.
\end{equation}
By 
\eqref{eq:widehat:rho:eps:n}
and
\eqref{eq:widehat:rho:eps:n:2}, 
we deduce that $\| \varrho_{n} -  \varrho \|^2_{{-(2+2\newD ),\newD }} \rightarrow 0$ as
$n \rightarrow \infty$. As a consequence, we have 
$\| \widetilde \varrho_{n} - \widetilde \varrho \|^2_{{4+2\newD ,\newD }} \rightarrow 0$ as
$n \rightarrow \infty$. And then, 
$\| D^2_{x_i,x_j} \widetilde \varrho_{n} - D^2_{x_i,x_j}   \widetilde \varrho \|^2_{{2+2\newD ,\newD }} \rightarrow 0$ as
$n \rightarrow \infty$, for all $i,j \in \{1,\ldots,d\}$. Also, 
$\| D_{x_i}^* \varrho_{n} -  D_{x_i}^* \varrho \|^2_{{-(4+2\newD ),{\newD }}} \rightarrow 0$ and 
$\| D_{x_i}^* 
( \Pi_{\widetilde{b}_{i}(t,\cdot,\nu_{t})}
\varrho_{n})
 -  D_{x_i}^* 
 ( \Pi_{\widetilde{b}_{i}(t,\cdot,\nu_{t})}
\varrho) \|^2_{{-(4+2\newD ),{\newD }}} \rightarrow 0$
 as
$n \rightarrow \infty$.

When $\varrho$ is replaced by $\varrho_{n}$, 
 \eqref{eq:conclusion:4thstep} holds true. Passing to the limit over $n \rightarrow \infty$, 
 we deduce that 
 \eqref{eq:conclusion:4thstep} remains true for a general $\varrho$. 
 \vskip 4pt

\noindent\textbf{Conclusion of proof:} We finally return to \eqref{eq:uniqueness}, using \eqref{eq:conclusion:4thstep} to get
\begin{equation*}
\begin{split}
d  \bigl[
\| \delta S_{t} \|_{{-(4+2\newD ),\newD }}^2
\bigr]
&\leq C   \| \delta S_{t} \|_{{-(4+2\newD ),\newD }}^2 \left[ 1 + \biggl(\int_{\RR^d} |y|^{2\newD } d\nu_{t}(y) 
\biggr)^{1/2}  \right] dt
\\
&\hspace{15pt} +
\sum_{i=1}^d 
\bigl\langle \delta S_{t}, (\sigma_{0}^\top D_x)^*_{i} \delta S_{t}\bigr\rangle \cdot d\varpi_{t}^i.
\end{split}
\end{equation*}
By expanding 
$d  [
\| \delta S_{t} \|_{{-(4+2\newD ),\newD }}^2
\exp ( - C ( t + \int_{0}^t 
(\int_{\RR^d}|y|^{2\newD } d\nu_{s}(y) 
)^{1/2}ds))
]$, we get
\begin{equation*}
\begin{split}
{\mathbb E}
\biggl[
\exp \biggl( - C \biggl( t + \int_{0}^t 
\biggl(\int_{\RR^d} |y|^{2\newD } d\nu_{s}(y) 
\biggr)^{1/2}ds\biggr) \biggr)
\| \delta S_{t} \|_{{-(4+2\newD ),\newD }}^2
\biggr] = 0, \quad t \in [0,T]. 
\end{split}
\end{equation*}
\hfill\qed

\subsection{Auxiliary lemma: Rates of convergence for smooth functions of an empirical sample} 
\label{se:empiricalconvergencerate}

We prove here the remarkable fact that the rate of convergence 
of the empirical distribution of a sample of i.i.d. random vectors with values $\RR^d$
can be made  
dimension-free when it is tested against test functions that are sufficiently smooth.
This is in contrast with existing results 
 in the literature that address the rate of convergence in the $p$-Wasserstein distance, such as \cite{fournier-guillin}.

\begin{lemma}
\label{lem:lln:l4:appendix}
Assume that $\phi$ is a $C^2$ function on $\cP^4(\RR^d)$, as defined in Section  
\ref{subse:derivatives:m:P2}, with bounded derivatives $\delta \phi/\delta m$ and 
$\delta^2 \phi/\delta m^2$, and that 
$\delta^2 \phi/\delta m^2$ is Lipschitz continuous in $m$ with respect to $\cW_{1}$, uniformly in the other variables. Then, there exists a constant $c$, depending on the bounds on the first and second-order derivatives
and on the Lipschitz constant of the second-order derivative such that, for any $m_0 \in \P^4(\R^d)$ and any i.i.d.\ sample $\bm{\zeta}=(\zeta_1,\ldots,\zeta_n)$ from $m_0$,
\begin{equation*}
{\mathbb E} \Bigl[ \bigl\vert \phi(m^n_{\bm{\zeta}}) - 
\phi(m_0) \bigr\vert^{4} \Bigr]^{1/4} \le c\left(1 + \left(\int_{\R^d}|x|\,m_0(dx)\right)^{1/4}\right)n^{-1/2}.
\end{equation*}
\end{lemma}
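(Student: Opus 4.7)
I would establish the estimate via a leave-one-out decoupling of the empirical measure combined with a Doob-martingale $L^4$-argument based on the Burkholder--Davis--Gundy inequality. Using the integral mean value theorem for the measure derivative,
\[
\phi(m^n_{\bm{\zeta}}) - \phi(m_0) = \int_0^1 \int_{\R^d} \frac{\delta\phi}{\delta m}(m^\lambda, v)(m^n_{\bm{\zeta}} - m_0)(dv)\, d\lambda = \frac{1}{n}\sum_{i=1}^n H_i,
\]
where $m^\lambda = (1-\lambda)m_0 + \lambda m^n_{\bm{\zeta}}$ and $H_i = \int_0^1 \bigl[\frac{\delta\phi}{\delta m}(m^\lambda, \zeta_i) - \int \frac{\delta\phi}{\delta m}(m^\lambda, v)\, m_0(dv)\bigr] d\lambda$. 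The first key step is to introduce the leave-one-out version $H_i^{(-)}$ obtained by replacing $m^\lambda$ with $m^{\lambda,(-i)} = (1-\lambda)m_0 + \lambda m^{n,(-i)}_{\bm{\zeta}}$, where $m^{n,(-i)}_{\bm{\zeta}}$ is the empirical measure of $\bm{\zeta}_{-i}$. Two properties are then crucial: first, conditional on $\bm{\zeta}_{-i}$ the measure $m^{\lambda,(-i)}$ is deterministic and $\zeta_i \sim m_0$, so integration over $\zeta_i$ yields $\EE[H_i^{(-)} \mid \bm{\zeta}_{-i}] = 0$, whence $G := \frac{1}{n}\sum_i H_i^{(-)}$ is centered; second, the identity $\frac{\delta\phi}{\delta m}(m', v) - \frac{\delta\phi}{\delta m}(m, v) = \int_0^1 \int \frac{\delta^2\phi}{\delta m^2}(m + t(m'-m), v, v')(m'-m)(dv')\,dt$ together with the boundedness of $\delta^2\phi/\delta m^2$ gives the total-variation Lipschitz estimate $\|\frac{\delta\phi}{\delta m}(m',\cdot) - \frac{\delta\phi}{\delta m}(m,\cdot)\|_\infty \le \|\frac{\delta^2\phi}{\delta m^2}\|_\infty\|m'-m\|_{TV}$, and since $\|m^\lambda - m^{\lambda,(-i)}\|_{TV} \le 2\lambda/n$, we obtain $|H_i - H_i^{(-)}| \le C/n$ uniformly in $\bm{\zeta}$, whence $\phi(m^n_{\bm{\zeta}}) - \phi(m_0) = G + O(1/n)$ almost surely.

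For the $L^4$-norm of $G$ I would work with the Doob martingale $M_k = \EE[G \mid \F_k]$, $\F_k = \sigma(\zeta_1,\ldots,\zeta_k)$. Replacing $\zeta_k$ with an independent copy $\zeta_k'$ affects $H_k^{(-)}$ by at most $O(1)$ through its evaluation point but affects each $H_i^{(-)}$ with $i \neq k$ by only $O(1/n)$ through the leave-$i$-out empirical measure, so the net change in $G$ is $O(1/n)$, giving $|M_k - M_{k-1}| \le C/n$ almost surely. The Burkholder--Davis--Gundy inequality in $L^4$ then yields
\[
\|G\|_4 \le C \Bigl\| \Bigl(\sum_{k=1}^n (M_k - M_{k-1})^2 \Bigr)^{1/2}\Bigr\|_4 \le C\bigl(n \cdot (C/n)^2\bigr)^{1/2} = Cn^{-1/2},
\]
which combined with the $O(1/n)$ term from the previous paragraph gives $\|\phi(m^n_{\bm{\zeta}}) - \phi(m_0)\|_4 = O(n^{-1/2})$.

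The main obstacle is the careful bounded-differences computation for $G$, which is delicate because each $H_i^{(-)}$ depends intricately on $\bm{\zeta}_{-i}$ via the empirical measure; one must cleanly separate the large but isolated ``evaluation-point'' fluctuation (occurring only in the $i=k$ term, bounded by $2\|\delta\phi/\delta m\|_\infty$) from the many small ``empirical-measure'' fluctuations (of size $2\|\delta^2\phi/\delta m^2\|_\infty/(n-1)$, but occurring $n-1$ times). The factor $(1 + (\int |x|\, m_0(dx))^{1/4})$ in the stated bound would naturally enter an alternative proof relying on the $\cW_1$-Lipschitz hypothesis for $\delta^2\phi/\delta m^2$ directly, for then the leave-one-out errors become $\cW_1(m^\lambda, m^{\lambda,(-i)}) \le (\lambda/n)(|\zeta_i| + n^{-1}\sum_j |\zeta_j|)$, introducing the first moment of $m_0$ after taking expectations and applying the $L^4$-triangle inequality.
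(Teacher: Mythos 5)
Your proposal is correct, but it proves the lemma by a genuinely different route than the paper. The paper attacks the fourth moment head-on: after splitting off the i.i.d.\ part $S_1$ (the derivative evaluated at $m_0$), it expands $\bigl(\sum_i\varphi^i_\lambda\bigr)^4$ according to the coincidence pattern of the indices, and kills the dominant contributions (three and four distinct indices) by replacing each $\varphi^i_\lambda$ with a leave-out version that is conditionally centered; for the four-distinct-index term a further first-order Taylor expansion of $\delta\phi/\delta m$ in the measure argument is required, and it is precisely the $\W_1$-Lipschitz hypothesis on $\delta^2\phi/\delta m^2$ that controls the remainder $\varepsilon_n^{i_1,\ldots,i_4}$ --- this is where the factor $\int|x|\,m_0(dx)$ enters. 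You instead perform a single global leave-one-out correction, costing $O(1/n)$ pointwise and using only the boundedness of $\delta^2\phi/\delta m^2$ (via total-variation Lipschitz continuity of $\delta\phi/\delta m$ in $m$, which is legitimate since the constant shift in the normalization \eqref{def:derivative-shift} cancels in $H_i$), and then run McDiarmid/BDG on the Doob martingale of the centered functional $G$. Your bounded-differences computation is the crux and is sound: the $i=k$ term contributes $O(1)$ once, the $i\neq k$ terms contribute $O(1/n)$ each $n-1$ times, so the increments are $O(1/n)$ and BDG gives $\|G\|_4\le Cn^{-1/2}$. Notably, your argument dispenses with the $\W_1$-Lipschitz hypothesis and with the moment factor altogether, and yields exponential concentration rather than just a fourth-moment bound, so in this respect it is both simpler and slightly stronger than the paper's combinatorial expansion; the paper's method, on the other hand, is the one that generalizes to the conditional (common-noise) setting in which it is actually invoked in Step 1 of Lemma \ref{lem:clt:n1/2}, where the estimate must hold under $\E[\,\cdot\,|W]$ with a random base measure --- but your argument conditions equally well, so nothing is lost there either.
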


\begin{proof}
Since $\phi$ is differentiable, we have the following expansion:
\begin{eqnarray*}
\phi\bigl(m^n_{\bm{\zeta}}\bigr) - \phi(m_0)
&= & \int_{0}^1 d\lambda \int_{\RR^d} \frac{\delta \phi}{\delta m}
\bigl( \lambda m^n_{\bm{\zeta}} + (1-\lambda) m_0,v\bigr) d \bigl( m^n_{\bm{\zeta}} - m_0 \bigr)(v)
\\
&= & S_1  + S_2, 
\end{eqnarray*}
where 
\begin{eqnarray*}
S_1 & = & \int_{\RR^d} \frac{\delta \phi}{\delta m}
( m_0,v) d \bigl( m^n_{\bm{\zeta}} - m_0 \bigr)(v), 
\\
S_2 & = & \int_{0}^1 d\lambda \int_{\RR^d}
\Bigl[\frac{\delta \phi}{\delta m}
\bigl( 
 \lambda m^n_{\bm{\zeta}} + (1-\lambda) m_0,v\bigr)
-
 \frac{\delta \phi}{\delta m}
( m_0,v)
\Bigr]
 d \bigl( m^n_{\bm{\zeta}} - m_0 \bigr)(v). 
\end{eqnarray*}

\noindent\textbf{Step 1:}
We first deal with the term $S_1$  This may be rewritten as
\begin{equation*}
\begin{split}
&S_1
= \frac{1}{n}
\sum_{i=1}^n \left(
\frac{\delta \phi}{\delta m}
\bigl( m_0,\zeta_i\bigr) - {\mathbb E}
\left[
\frac{\delta \phi}{\delta m}
\bigl( m_0,\zeta_1)
\right]
\right).
\end{split} 
\end{equation*}
 Since $\delta \phi/\delta m$ is bounded, we get 
${\mathbb E}[ \vert S_1 \vert^4 ]^{1/4} \leq 
 c n^{-1/2}$, for a constant $c < \infty$ independent of $n$. The value of $c$ is allowed to vary from line to line. 
 \vskip 4pt
 
\noindent\textbf{Step 2:} Now, we turn to the term $S_2$, which we first rewrite in the form
\begin{align*}
S_2 &= \frac{1}{n} \sum_{i=1}^n\int_0^1\varphi_{\lambda}^i\,d\lambda,
\end{align*}
where we define, for $i=1,\ldots,n$ and $\lambda \in [0,1]$,
\begin{equation}
\label{eq:appendix:lln:varphii}
\begin{split}
\varphi_{\lambda}^i &:= 
\Bigl[\frac{\delta \phi}{\delta m}
\bigl( 
 \lambda m^n_{\bm{\zeta}} + (1-\lambda) m_0, \zeta_i
\bigr)
-
 \frac{\delta \phi}{\delta m}
( m_0,\zeta_i
)\Bigr] 
 \\
 &\hspace{60pt}- 
\widetilde\EE \biggl[ \frac{\delta \phi}{\delta m}
\bigl( 
 \lambda m^n_{\bm{\zeta}} + (1-\lambda) m_0, \widetilde \zeta\bigr)
-
 \frac{\delta \phi}{\delta m}
\bigl( m_0, \widetilde \zeta
\bigr)
\biggr],
\end{split}
\end{equation}
and where the expectation $\widetilde \EE$ is taken over the random variable $\widetilde{\zeta}$, which has the same distribution as $\zeta_1$ and which is independent of $\bm{\zeta}$.
Now, we have 
\begin{align*}
|S_2|^4 &\le n^{-4}\int_0^1\left(\sum_{i=1}^n\varphi_{\lambda}^i\right)^4\,d\lambda,
\end{align*}
so we focus on estimating the integrand for a fixed $\lambda$. We have
\begin{equation}
\label{eq:l4:lem}
\begin{split}
\biggl( \sum_{i=1}^n \varphi_{\lambda}^{i}
\biggr)^4 &= 
\sum_{i=1}^n (\varphi_{\lambda}^i)^4 +
3 \sum_{i \not = j} (\varphi_{\lambda}^i)^2 (\varphi_{\lambda}^j)^2 
 + 4 \sum_{i \not = j}
\varphi_{\lambda}^{i} (\varphi_{\lambda}^{j})^3
\\
&\hspace{15pt} + 6 \sum_{i_{1},i_{2},i_{3} \ \textrm{distinct}}
 \varphi_{\lambda}^{i_{1}} \varphi_{\lambda}^{i_{2}} (\varphi_{\lambda}^{i_{3}})^2
 +
 \sum_{i_{1},i_{2},i_{3},i_{4} \ \textrm{distinct}} 
  \varphi_{\lambda}^{i_{1}} \varphi_{\lambda}^{i_{2}} \varphi_{\lambda}^{i_{3}} \varphi_{\lambda}^{i_{4}}. 
  \end{split}
\end{equation}
Noting that $\varphi_{\lambda}^i$ is bounded, uniformly in $i$ and $\lambda$, we bound the first three terms on the right-hand side by $cn^2$.
The only difficulty is to upper bound the last two terms. 
\vskip 4pt

\noindent\textbf{Step 3:} We start with the sum over 
$i_{1},i_{2},i_{3}$ distinct. We let, for any $(i_{1},i_{2},i_{3}) \in \{1,\ldots,n\}^3$, $
m^{n,-(i_{1},i_{2},i_{3})}_{\bm{\zeta}}
= \frac{1}{n-3}
\sum_{\ell \not = i_{1},i_{2},i_{3}}
\delta_{\zeta_\ell}$. We then observe that 
\begin{equation*}
\begin{split}
&\Bigl[\frac{\delta \phi}{\delta m}
\bigl( 
 \lambda m^n_{\bm{\zeta}} + (1-\lambda) m_0,\zeta_i
\bigr)
-
 \frac{\delta \phi}{\delta m}
\bigl( m_0,\zeta_i
\bigr)\Bigr]
\\
&\hspace{15pt}
- 
\Bigl[\frac{\delta \phi}{\delta m}
\bigl( 
 \lambda m^{n,-(i_{1},i_{2},i_{3})}_{\bm{\zeta}}  + (1-\lambda) m_0,\zeta_i
\bigr)
-
 \frac{\delta \phi}{\delta m}
\bigl( m_0,\zeta_i
\bigr)\Bigr]
\\
&=
\Bigl[\frac{\delta \phi}{\delta m}
\bigl( 
 \lambda m^n_{\bm{\zeta}} + (1-\lambda) m_0, \zeta_i
\bigr)
-
\frac{\delta \phi}{\delta m}
\bigl( 
 \lambda {m}^{n,-(i_{1},i_{2},i_{3})}_{\bm{\zeta}} + (1-\lambda) m_0, \zeta_i
\bigr) \Bigr]
 \\
 &= \int_{0}^1 d s 
 \int_{\RR^d}
\frac{\delta^2 \phi}{\delta m^2}
\Bigl( 
 s\lambda m^n_{\bm{\zeta}} 
 +(1-s)
 \lambda 
 m^{n,-(i_{1},i_{2},i_{3})}_{\bm{\zeta}}
  + (1-\lambda) m_0, \zeta_i,v
\Bigr) 
d \bigl( 
m^n_{\bm{\zeta}}
-m^{n,-(i_{1},i_{2},i_{3})}_{\bm{\zeta}} \bigr)(v).
\end{split}
\end{equation*}
Now,
\begin{equation}
\label{eq:lem:appendix:diff:dirac:mass}
m^n_{\bm{\zeta}}
-m^{n,-(i_{1},i_{2},i_{p})}_{\bm{\zeta}}
= \frac{1}{n} \sum_{j = i_{1},i_{2},i_{3}}
\delta_{\zeta_j}
- \frac{3}{n (n-3)} \sum_{j \not = i_{1},i_{2},i_{3}} \delta_{\zeta_j},
\end{equation}
and then, since $\delta^2 \phi/\delta m^2$ is bounded, we get
\begin{equation*}
\begin{split}
&\biggl\vert \Bigl[\frac{\delta \phi}{\delta m}
\bigl( 
 \lambda m^n_{\bm{\zeta}} + (1-\lambda) m_0,\zeta_i
\bigr)
-
 \frac{\delta \phi}{\delta m}
\bigl( m_0,\zeta_i
\bigr)\Bigr]
\\
&\hspace{15pt}
- 
\Bigl[\frac{\delta \phi}{\delta m}
\bigl( 
 \lambda m^{n,-(i_{1},i_{2},i_{3})}_{\bm{\zeta}}  + (1-\lambda) m_0,\zeta_i
\bigr)
-
 \frac{\delta \phi}{\delta m}
\bigl( m_0,\zeta_i
\bigr)\Bigr] \biggr\vert \leq \frac{c}{n}. 
\end{split}
\end{equation*}
Proceeding in the same way 
for the term containing $\widetilde{\zeta}$ in 
\eqref{eq:appendix:lln:varphii}, we get 
\begin{equation*}
\vert \varphi_{\lambda}^{i} - \varphi_{\lambda}^{i,-(i_{1},i_{2},i_{3})} \vert \leq \frac{c}{n},
\end{equation*}
where
\begin{equation*}
\begin{split}
\varphi_{\lambda}^{i,-(i_{1},i_{2},i_{3})} &= 
\Bigl[\frac{\delta \phi}{\delta m}
\bigl( 
 \lambda m^{n,-(i_{1},i_{2},i_{3})}_{\bm{\zeta}} + (1-\lambda) m_0, \zeta_i
\bigr)
-
 \frac{\delta \phi}{\delta m}
( m_0,\zeta_i
)\Bigr] 
 \\
 &\hspace{60pt}- 
\widetilde\EE \biggl[ \Bigl[\frac{\delta \phi}{\delta m}
\bigl( 
 \lambda  m^{n,-(i_{1},i_{2},i_{3})}_{\bm{\zeta}} + (1-\lambda) m_0, \widetilde \zeta\bigr)
-
 \frac{\delta \phi}{\delta m}
\bigl( m_0, \widetilde \zeta
\bigr)\Bigr]
\biggr].
\end{split}
\end{equation*}
Therefore,
\begin{align*}
&{\mathbb E} \left[ \sum_{i_{1},i_{2},i_{3} \ \textrm{\rm distinct}} \varphi_{\lambda}^{i_{1}}
\varphi_{\lambda}^{i_{2}} (\varphi_{\lambda}^{i_{3}})^2 
\right] \\
	&\quad\quad\quad\quad\quad\leq c n^2 + {\mathbb E} \biggl[ \sum_{i_{1},i_{2},i_{3} \ \textrm{\rm distinct}} \varphi_{\lambda}^{i_{1},-(i_{1},i_{2},i_{3})}
\varphi_{\lambda}^{i_{2},-(i_{1},i_{2},i_{3})} (\varphi_{\lambda}^{i_{3},-(i_{1},i_{2},i_{3})})^2 
\biggr],
\end{align*}  
and it is clear, by a conditional independence argument, that the last sum is in fact zero. This shows that the left-hand side is less than $c n^2$.   
\vskip 4pt

\noindent\textbf{Step 4:} It now remains to study the sum over distinct $i_{1},\cdots,i_{4}$ in
\eqref{eq:l4:lem}.  To do so, we may define in a similar manner $\varphi_{\lambda}^{i,-(i_{1},\ldots,i_{4})}$ for $(i_{1},\ldots,i_{4}) \in \{1,\cdots,n\}^4$. 
Notice first that, for any $(i_{1},\ldots,i_{4})$,
\begin{align}
\varphi_{\lambda}^{i_{1}}
\varphi_{\lambda}^{i_{2}}\varphi_{\lambda}^{i_{3}}\varphi_{\lambda}^{i_{4}} = \,&\varphi_{\lambda}^{i_{1},-(i_{1},\ldots,i_{4})}
\varphi_{\lambda}^{i_{2},-(i_{1},\ldots,i_{4})}\varphi_{\lambda}^{i_{3},-(i_{1},\ldots,i_{4})}\varphi_{\lambda}^{i_{4},-(i_{1},\ldots,i_{4})} \label{eq:l4:lln-0} \\
	&+ \sum_{j=1}^4(\varphi_{\lambda}^{i_j} - \varphi_{\lambda}^{i_j,-(i_{1},\ldots,i_{4})})\prod_{\stackrel{k=1}{k \neq j}}^4\varphi_{\lambda}^{i_k,-(i_{1},\ldots,i_{4})} + S_3, \nonumber
\end{align}
where  $S_3$  is a sum of terms that involve a  product of four terms, at least two of which are of the form  $(\varphi_{\lambda}^{i_j} - \varphi_{\lambda}^{i_j,-(i_{1},\ldots,i_{4})})$ for some $j$, and the rest are of the form $\varphi_{\lambda}^{i_j}$ or $\varphi_{\lambda}^{i_j,-(i_{1},\ldots,i_{4})}$. But since $|\varphi_{\lambda}^{i_j} - \varphi_{\lambda}^{i_j,-(i_{1},\ldots,i_{4})}| \le c/n$, and since the first term on the right-hand side of \eqref{eq:l4:lln-0} has mean zero, we find
\begin{align}
&{\mathbb E}
\biggl[ \sum_{i_{1},i_{2},i_{3},i_{4} \ \textrm{\rm distinct}} \varphi_{\lambda}^{i_{1}}
\varphi_{\lambda}^{i_{2}}\varphi_{\lambda}^{i_{3}}\varphi_{\lambda}^{i_{4}}
\biggr] \leq  c n^2 + {\mathbb E}
\biggl[ 
\sum_{i_{1},\ldots,i_{4} \ \textrm{\rm distinct}} 
\sum_{j=1}^4(\varphi_{\lambda}^{i_j} - \varphi_{\lambda}^{i_j,-(i_{1},\ldots,i_{4})})\prod_{\stackrel{k=1}{k \neq j}}^4\varphi_{\lambda}^{i_k,-(i_{1},\ldots,i_{4})}
\biggr]. \label{eq:l4:lln}
\end{align}
Returning to the proof of the third step but using in addition the $\W_1$-Lipschitz property of $\delta^2 \phi/\delta m^2$ with respect to the measure argument, we can write:
\begin{equation}
\begin{split}
&\varphi_{\lambda}^{i} 
-
\varphi_{\lambda}^{i,-(i_{1},\ldots,i_{4})} 
\\
&=
 \int_{0}^1 d s 
 \int_{\RR^d}
\frac{\delta^2 \phi}{\delta m^2}
\Bigl( 
\lambda 
 m^{n,-(i_{1},\ldots,i_{4})}_{\bm{\zeta}}
  + (1-\lambda) m_0, \zeta_i,v
\Bigr) 
d \bigl( 
m^n_{\bm{\zeta}}
-m^{n,-(i_{1},\ldots,i_{4})}_{\bm{\zeta}} \bigr)(v)
\\
&\hspace{15pt} -
\widetilde{\EE}
 \int_{0}^1 d s 
 \int_{\RR^d}
\frac{\delta^2 \phi}{\delta m^2}
\Bigl( 
\lambda 
 m^{n,-(i_{1},\ldots,i_{4})}_{\bm{\zeta}}
  + (1-\lambda) m_0,\widetilde \zeta,v
\Bigr) 
d \bigl( 
m^n_{\bm{\zeta}}
-m^{n,-(i_{1},\ldots,i_{4})}_{\bm{\zeta}} \bigr)(v)
\\
&\hspace{15pt} + \varepsilon_{n}^{i_{1},\ldots,i_{4}}, 
\end{split}  \label{eq:l4:lln-1}
\end{equation}
where 
\begin{equation*}
\vert \varepsilon_{n}^{i_{1},\ldots,i_{4}} \vert \leq 
\frac{c}{n^2} \bigl( \vert \zeta_{i_{1}} \vert + \ldots + 
\vert \zeta_{i_{4}} \vert \bigr)
+ \frac{c}{n^3} \sum_{j=1}^n \vert \zeta_j \vert.
\end{equation*}
Following \eqref{eq:lem:appendix:diff:dirac:mass}, we can write the difference 
$\varphi_{\lambda}^{i} -\varphi_{\lambda}^{i,-(i_{1},\ldots,i_{4})}$ in the form
\begin{equation*}
\begin{split}
\varphi_{\lambda}^{i} 
-
\varphi_{\lambda}^{i,-(i_{1},\ldots,i_{4})}
= \frac1n \sum_{j=1}^4 F \bigl( m^{n,-(i_{1},\ldots,i_{4})}_{\bm{\zeta}},\zeta_i,\zeta_{i_j} \bigr) 
+ \frac1n G\bigl( m^{n,-(i_{1},\ldots,i_{4})}_{\bm{\zeta}},\zeta_i \bigr) + \varepsilon_{n}^{i_{1},\ldots,i_{4}},
\end{split}
\end{equation*}
for bounded measurable functions $F$ and $G$. Now note that $F \bigl( m^{n,-(i_{1},\ldots,i_{4})}_{\bm{\zeta}},\zeta_{i_k},\zeta_{i_j} \bigr)$ is measurable with respect to $(\zeta_\ell : \ell \in \{i_1,\ldots,i_4\}^c \cup \{i_k,i_j\})$, for $k,j \in \{1,\ldots,4\}$, and similarly for the $G$ term. On the other hand, for $k=1,\ldots,4$,
\[
\E[\varphi_{\lambda}^{i_k,-(i_{1},\ldots,i_{4})} \, | \,  (\zeta_\ell : \ell \neq i_k)] = 0.
\]
Hence, for each $j \in \{1,\ldots,4\}$ and each $i_1,\ldots,i_4$ distinct, we find that the corresponding term in \eqref{eq:l4:lln} simplifies in the sense that
\begin{align*}
\E\left[\sum_{j=1}^4(\varphi_{\lambda}^{i_j} - \varphi_{\lambda}^{i_j,-(i_{1},\ldots,i_{4})} - \epsilon_n^{i_{1},\ldots,i_{4}})\prod_{\stackrel{k=1}{k \neq j}}^4\varphi_{\lambda}^{i_k,-(i_{1},\ldots,i_{4})}\right] = 0.
\end{align*}
This finally lets us bound the $(i_1,\ldots,i_4)$-term in \eqref{eq:l4:lln} by $c\E[\epsilon_n^{i_{1},\ldots,i_{4}}] \le cn^{-2}\E[|\zeta_1|]$, which completes the proof.
\end{proof}

\section{A linear-quadratic example} \label{se:examples}

This section discusses an explicitly solvable model that does not satisfy Assumption \ref{assumption:A}. Nonetheless, 
we show that our strategy for deriving the central limit theorem by comparison with a more classical McKean-Vlasov system is still successful.

\subsection{Description of the  model}
\label{subs-lqmodel}

Consider the mean field game model of systemic risk proposed in \cite{carmona-fouque-sun}.
Here, $d=1$, $\sigma$ and $\sigma_0$ are positive constants, the action space  $A = \R$, and 
for some  $\cc, \epsilon, \bb > 0$ and $0 \le q^2 \le \epsilon$ we have
\begin{align*} 
b(x,m,a) &= \bb(\overline{m} - x) + a, \\
f(x,m,a) &= \frac{1}{2}a^2 - qa(\overline{m}-x) + \frac{\epsilon}{2}(\overline{m}-x)^2, \\
g(x,m) &= \frac{\cc}{2}(\overline{m} - x)^2,
\end{align*}
where $\overline{m} = \int_{\R}y\,dm(y)$. 
Both the drift and cost functions induce a herding behavior toward the population average; see \cite{carmona-fouque-sun} for a thorough discussion.

It was shown in \cite[(3.24)]{carmona-fouque-sun}
 that the unique closed loop Nash equilibrium dynamics is given by: 
\begin{align}
\alpha^i_t = \left[q + \varphi^n_t\left(1-\frac{1}{n}\right)\right](\overline{X}_t - X^i_t), \quad t \in [0,T], \label{def:Nash-open}
\end{align}
where $\overline{X}_t = \frac{1}{n}\sum_{i=1}^nX^i_t$, and where $\varphi^n$ is the unique solution to the Riccati equation:
\begin{align*}
\dot{\varphi}^n_t = 2(\bb+q)\varphi^n_t + \left(1-{\frac{1}{n^2}}\right)|\varphi^n_t|^2 - (\epsilon - q^2), \quad\quad \varphi^n_T = \cc.
\end{align*}
The explicit solution takes the form 
\begin{align}
\varphi^n_t = \frac{-(\epsilon-q^2)\left(e^{(\delta_n^+-\delta_n^-)(T-t)}-1\right) - \cc\left(\delta^+_n e^{(\delta_n^+-\delta_n^-)(T-t)}-\delta_n^-\right)}{\left(\delta_n^-e^{(\delta_n^+-\delta_n^-)(T-t)} - \delta_n^+\right) - \cc\left(1-\frac{1}{n^2}\right)\left(e^{(\delta_n^+-\delta_n^-)(T-t)}-1\right)}, \label{def:Riccati1}
\end{align}
where
\begin{align}
\delta_n^{\pm} = -(\bb+q) \pm \sqrt{(\bb+q)^2 + \left(1-\frac{1}{n^2}\right)(\epsilon-q^2)}. \label{def:Riccati2}
\end{align}
In particular, the Nash equilibrium state process is given by the solution $\bm{X}=(X^1,\ldots,X^n)$ of the SDE system:
\begin{align}
\label{eq:CFS:1}
dX^i_t = \left(\bb + q + \varphi^n_t\left(1-\frac{1}{n}\right)\right)(\overline{X}_t - X^i_t)dt + \sigma dB^i_t + \sigma_0 dW_t, \quad t \in [0,T]. 
\end{align}

It is straightforward to show that $\varphi^n_t \rightarrow \varphi^\infty_t$ as $n\rightarrow\infty$, uniformly in $t \in [0,T]$, where $\varphi^\infty$ is the unique solution to the Riccati equation
\begin{align*}
\dot{\varphi}^{\infty}_t = 2(\bb+q)\varphi^{\infty}_t + |\varphi^{\infty}_t|^2 - (\epsilon - q^2), \quad\quad \varphi^{\infty}_T = \cc.
\end{align*}
The explicit solution is of the same form given by \eqref{def:Riccati1} and \eqref{def:Riccati2}, with $n=\infty$.
It follows that $\bm{X}=(X^1,\ldots,X^n)$ should be ``close'' in some sense to the solution $\bm{Y}=(Y^1,\ldots,Y^n)$ of the auxiliary SDE system:
\begin{align}
\label{eq:CFS:2}
dY^i_t = (\bb + q + \varphi^\infty_t)(\overline{Y}_t - Y^i_t)dt + \sigma dB^i_t + \sigma_0 dW_t,
\end{align}
initialized at the same points $Y^i_0=X^i_0$.
Of course, it should be noted that 
the process ${\boldsymbol Y}$ plays here the same role as the process $\overline{\boldsymbol X}$ in 
\eqref{eq:sec4:overlineX}, the solution 
$U$ to the master equation being given in the current framework by:
\begin{equation*}
U(t,x,m) = \frac{\varphi_{t}^\infty}2 \bigl( \overline m - x \bigr)^2.  
\end{equation*}
In this regard, the fact that ${\boldsymbol X}$ and ${\boldsymbol Y}$ should be ``close'' is completely analogous to the statements of Theorems \ref{th:mainestimate} and \ref{th:mainestimate2}.

In fact, there are two ways to compare ${\boldsymbol X}$ and ${\boldsymbol Y}$. One is to use the explicit expressions for their solutions. 
Another  strategy, which we prefer,  is to compare \eqref{eq:CFS:1} and \eqref{eq:CFS:2} and  use the fact that $\bm{X}_0 = \bm{Y}_0$,  applying Gronwall's inequality to  deduce that there exists a constant $ C < \infty$ such that
\begin{align}
\label{eq:sec:8:gronwall}
\frac{1}{n}\sum_{i=1}^n\|X^i-Y^i\|_{{\infty}} \le C\left\|
\left(1-\frac{1}{n}\right)
\varphi^n - \varphi^\infty\right\|_{{\infty}} \frac{1}{n}\sum_{i=1}^n\|X^i\|_{{\infty}}, \ a.s.,
\end{align}
where, as usual, $\|\cdot\|_\infty$ denotes the supremum norm on $[0,T]$.  
On the other hand, the equation \eqref{eq:CFS:1} and  Gronwall's inequality  together yield
\begin{align*}
\frac{1}{n}\sum_{i=1}^n\|X^i\|_{{\infty}} \le C\left(1 + \frac{1}{n}\sum_{i=1}^n|X^i_0| + \frac{1}{n}\sum_{i=1}^n\|B^i\|_{{\infty}} + \|W\|_{{\infty}} \right), \ a.s.
\end{align*}
As soon as $X^i_0$ are i.i.d. and subgaussian (e.g., $\E[\exp(\kappa|X^1_0|^2)] < \infty$ for some $\kappa > 0$), we find a uniform subgaussian bound on these averages; that is, there exist constants $C < \infty, \delta > 0$, independent of $n$, such that 
\begin{align*}
\PP\left(\frac{1}{n}\sum_{i=1}^n\|X^i\|_{{\infty}} > a\right) \le \exp(-\delta^2 a^2), \text{ for all } a \ge C, \ n \in \N.
\end{align*}
Assuming without any loss of generality that the constant $C$ in the last display coincides with the one in  \eqref{eq:sec:8:gronwall}, and letting $r_n = C\left\|
\left(1-\frac{1}{n}\right)
\varphi^n - \varphi^\infty\right\|_{{\infty}}$, we find that, for $a \ge Cr_n$,
\begin{align}
\PP\left(\W_{{1,\C^d}}(m^n_{\bm{X}},m^n_{\bm{Y}}) > a\right) &\le \PP\left(\frac{1}{n}\sum_{i=1}^n\|X^i-Y^i\|_{{\infty}} > a\right) \le \PP\left(\frac{r_n}{n}\sum_{i=1}^n\|X^i\|_{{\infty}} > a\right) \nonumber \\
	&\le \exp \left(-\frac{\delta^2a^2}{r_n^2}\right). \label{def:CFSmodel-keyestimate}
\end{align}
It is straightforward to check that $r_n = O(1/n)$, which implies in particular \eqref{def:estimate-|X-Xbar|}.

\subsection{Derivation of the CLT} 
Following our study in Section \ref{se:CLT-proofs}, the key estimate \eqref{def:CFSmodel-keyestimate} is sufficient to transfer a central limit theorem from the classical McKean-Vlasov particle system $\bm{Y}$ over to the Nash system $\bm{X}$.
Still, 
the derivation of the CLT suffers from the fact that the drift is unbounded, which prevents us from directly applying the results of Section 
\ref{se:CLT-proofs}.
We explain below how to adapt the arguments.

The first step in the analysis is to check that the tightness property, as stated in Proposition \ref{thm:clt:tightness}, 
remains true. In fact, the main point is to revisit 
\eqref{eq:bound:2}.
For $\newDD \geq 1$, we have
\begin{equation*}
\begin{split}
&\bigl\|
 \phi'' 
\bigr\|_{1+\newDD,2\newDD} 
+
\bigl\|
 \phi' 
\bigr\|_{1+\newDD,2\newDD}
\leq C \bigl\| \phi \bigr\|_{2+2\newDD,\newDD}, \quad \phantom{\int}
\\
&\bigl\|
  \phi'  \, \widetilde b(t,\cdot,\mu)
\bigr\|_{1+\newDD,2\newDD} \leq C \bigl\| \phi \bigr\|_{2+2\newDD,\newDD},\phantom{\int}
\end{split}
\end{equation*}
where $\widetilde b(t,x,\mu)$ is here given by $(\bb+q+\varphi_{t}^{\infty})(x- \overline{\mu})$. 

Similar to 
 \eqref{eq:def:cal:G}, we let
 \begin{equation*}
 \bigl[ {\mathcal G}(t,m,\mu) \phi
 \bigr](v) = 
- (b+q+\varphi_{t}^{\infty})
 \bigl\langle \mu, \phi'  \bigr\rangle v.  
 \end{equation*}			
 It is then easily  checked that the first line in 
\eqref{eq:bound:2} remains true.  
The rest of the proof of tightness, see the statement of Proposition \ref{thm:clt:tightness}, is similar.
Importantly, it holds true for values of $\newDD$ greater than or equal to $\lfloor d/2 \rfloor +1$ (which is here equal to 1), and not only for $\newDD=1$, provided the constraint $p' > 12 \newDD$ in the statement 
of Theorem
\ref{th:CLT:sec:3} is modified accordingly. 

The second step, which is in fact more difficult, is the identification of the limit. The first point is to write down the corresponding equation 
\eqref{eq:lim:spde:for:clt}. The difficulty is 
twofold: first, 
the function $v \mapsto [{\mathcal G}(t,m,\mu) \phi](v)$ is not in ${\mathcal H}^{2\newDD+4,\newDD}$
when $\newDD=1$; second, when the test function 
$\phi$ is in the space ${\mathcal H}^{2\newDD+4,\newDD}$, the function 
$\phi' \widetilde{b}(t,\cdot,\mu)$ may not belong to ${\mathcal H}^{2\newDD+2,\newDD}$ because of the term 
$x\phi'(x)$ in 
$\phi'(x) \widetilde{b}(t,x,\mu)$. 
The first difficulty may be easily circumvented by choosing $\newDD>3/2$, in which case the identity function 
$\textrm{\rm Id} : v \mapsto v$ indeed belongs to ${\mathcal H}^{2\newDD+4,\newDD}$; as already explained, this mostly requires us to change $p'$ accordingly. 

It is more challenging to deal with the fact that the function $x \mapsto x\phi'(x)$ does not belong to ${\mathcal H}^{2\newDD+2,\newDD}$.
The key point is to modify the space of test functions. Indeed, if we assume that $\phi$ is in the space ${\mathcal H}^{2\newDD+4,\newDD-1}$, 
then the function $\phi' \widetilde{b}(t,\cdot,\mu)$ is ${\mathcal H}^{2\newDD+2,\newDD}$.
For this, it  suffices to write down the analogue of
\eqref{eq:spde:cor:lim}, but in a smaller space of test functions.  Specifically, this takes the form
\begin{equation}
\begin{split}
d \bigl\langle S_{t},\phi
\bigr\rangle 
&=  (\bb + q + \varphi_{t}^{\infty}) \Bigl[
    \bigl\langle
 S_{t},
 \phi' \times
 (\textrm{\rm Id} - \bar \mu_t) \bigr\rangle 
 -  \langle 
 S_{t}, \textrm{\rm Id} \rangle
 \bigl\langle \mu_t, \phi' \bigr\rangle \Bigr] dt
 \\ 
&\hspace{15pt} + \tfrac12 \bigl\langle S_{t},
( \sigma^2 
+ \sigma_{0}^2 ) \phi''  \bigr\rangle  dt +
\sigma_{0} \bigl\langle S_{t},  \phi' 
\bigr\rangle dW_t
+ d {\xi_{t}}(\phi),
\end{split} \label{def:LQSPDE}
\end{equation} 
for $\phi$ in the space ${\mathcal H}^{2\newDD+4,\newDD-1}$.

Of course, the difficulty is then to prove uniqueness of the limiting equation.
By linearity of the equation, it suffices to prove 
that any solution $(S_t)_{t \in [0,T]}$ in ${\mathcal H}^{-(2\newDD+2),\newDD}$
to the above equation, with a null initial condition and with $\xi$ set to $0$, is null. 
To do so, we assume that $\newDD>5/2$ (say $\newDD=3$ to simplify). Choosing $\phi \equiv 1$ as test function, it is clear that $\langle S_t,1\rangle =0$. 
Then, choosing $\phi$ as the identity $\textrm{\rm Id} : x \mapsto \phi(x)=x$ as test function, we obtain {$d\langle S_t, \mathrm{Id}\rangle = 0$}
The next step is to take advantage of the linear structure of the drift. Indeed, for any 
$t \in [0,T]$ and any test function $\phi \in {\mathcal H}^{2\newDD+4,\newDD-1}$, we may consider the new test function $\phi_{t} : \RR \ni x \mapsto 
\phi( \ell_{t} x)$, where 
\begin{equation*}
\ell_{t} = \exp \biggl( - \int_{0}^t \bigl(\bb + q+ \varphi_{s}^{\infty} \bigr) ds \biggr).
\end{equation*}
We then compute
$ d \langle S_{t},\phi_{t}  \rangle$ as
\begin{equation}
\label{eq:St:phit}
\begin{split}
d \bigl\langle S_{t},\phi_{t}\rangle
&= 
- \ell_{t} (\bb + q + \varphi_{t}^{\infty}) 
  \overline \mu_t
   \bigl\langle
 S_{t},
 \phi'(\ell_{t} \cdot)
 \bigr\rangle 
 dt
 \\ 
&\hspace{15pt} + \frac{\ell_{t}^2}2 \bigl\langle S_{t},
( \sigma^2 
+ \sigma_{0}^2 ) \phi''(\ell_{t} \cdot) \bigr\rangle  dt +
\sigma_{0}
\ell_{t} \bigl\langle S_{t},  \phi'(\ell_{t} \cdot) 
\bigr\rangle d {W_{t}}, \quad t \in [0,T].
\end{split}
\end{equation}
In order to conclude, we call $S_{t}'$ the element of ${\mathcal H}^{-(2\newDD+2),\newDD}$ defined by
\begin{equation*}
\bigl\langle S_{t}',\psi \bigr\rangle 
= \bigl\langle S_{t}, \psi( \ell_{t} \cdot) 
\bigr\rangle. 
\end{equation*}
With this notation in hand, we can write
\begin{equation*}
\begin{split}
d \bigl\langle S_{t}',\phi\rangle
&= 
-\ell_{t} (\bb + q + \varphi_{t}^{\infty}) 
  \overline \mu_t
   \bigl\langle
 S_{t}',
 \phi'
 \bigr\rangle 
 dt
 \\ 
&\hspace{15pt} + \frac{\ell_{t}^2}2 \bigl\langle S_{t}',
( \sigma^2 
+ \sigma_{0}^2 ) \phi''  \bigr\rangle  dt +
\sigma_{0}
\ell_{t} \bigl\langle S_{t}',  \phi' 
\bigr\rangle d {W_{t}}, \quad t \in [0,T].
\end{split}
\end{equation*}
Obviously, $S_{0}'=0$. Once again, the above holds true for any test function 
$\phi \in {\mathcal H}^{2\newDD+4,\newDD-1}$, but, since the drift no longer involves the function 
$x \mapsto \phi'(x) x$, it also holds true for any test function $\phi \in {\mathcal H}^{2\newDD+4,\newDD}$. 
Now, we can use the same argument in the proof of uniqueness in Section 
\ref{se:CLT-proofs} to show that, necessarily, 
$S'$ is null.

\subsection{Explicit solution of the SPDE}
We now provide an explicit expression for the solution to 
\eqref{def:LQSPDE}.
{To do so, we first notice that, in the above framework,  
the limiting McKean-Vlasov equation takes the form
\begin{equation}
\label{MKV:LQ}
d {\mathcal X}_{t} = (\bb + q + \varphi^\infty_t)( \overline \mu_{t} - {\mathcal X}_t)dt + \sigma dB^1_t + \sigma_0 dW_t,
\quad t \in [0,T],
\end{equation}
where $\mu_{t} = {\mathcal L}({\mathcal X}_{t} \vert W)$ and 
$\overline \mu_{t} = {\mathbb E}[{\mathcal X}_{t} \vert W]$. 
Equation \eqref{MKV:LQ} is the analogue of 
\eqref{def:MKV-conditional}.} Taking the conditional mean with respect to 
$W$ in 
\eqref{MKV:LQ}, we deduce that $\overline \mu_{t} = \overline \mu_{0} + \sigma_{0} W_{t}$, 
for $t \in [0,T]$. Also, we have 
\begin{align*}
d \bigl( {\mathcal X}_t - \overline \mu_{t} \bigr) = - (\bb + q + \varphi^\infty_t)( {\mathcal X}_t - \overline \mu_{t})dt + \sigma dB^1_t,
\end{align*}
that is, 
\begin{equation*}
{\mathcal X}_{t} - \overline \mu_{t} = 
\ell_{t} \biggl( {\mathcal X}_{0} - \overline{\mu}_{0} + \sigma \int_{0}^t \ell_{s}^{-1}  dB_{s}^1 \biggr),
\end{equation*}
which shows that, conditional on $({\mathcal X}_{0},W)$, ${\mathcal X}_{t}$ has a Gaussian distribution. We then get 
\begin{equation*}
\mu_{t} = 
\int_{\RR}
{\mathcal N}\biggl(\ell_{t} x+ (1-\ell_{t}) \overline \mu_{0} + \sigma_{0} W_{t}, \sigma^2 \ell_{t}^2 \int_{0}^t 
\ell_{s}^{-2} ds \biggr) d\mu_{0}(x). 
\end{equation*}
In particular, $\mu_{t}$ has a smooth density, with which we identify it. 

 {Recall from Section \ref{se:CLT-statements} the expression of the Gaussian process $\xi$
in terms of a cylindrical white noise $\beta$ with values in $L^2(\RR)$, independent of $(\theta_0,W)$.
 Taking } $\phi = \textrm{\rm Id}$ in 
 \eqref{def:LQSPDE}, we get as in the previous paragraph that $d\bigl\langle S_{t},\textrm{\rm Id} \bigr\rangle  = \sigma \langle \sqrt{\mu_{t}} , d \beta_{t} \rangle$.
That is, 
 \begin{equation}
 \label{eq:St:Id}
 \begin{split}
 \bigl\langle S_{t},\textrm{\rm Id} \bigr\rangle
 &=  \bigl\langle \theta_{0},\textrm{\rm Id} \bigr\rangle   +   \sigma \int_{0}^t  \langle \sqrt{\mu_{s}} , d \beta_{s} \rangle.
 \end{split}
 \end{equation}
Observe that, in the last term, the process 
$(\int_{0}^t \langle \sqrt{\mu}_{s},d \beta_{s} \rangle)_{t \in [0,T]}$
is a standard Brownian motion.
 
 Now, 
 performing the same computation as in 
\eqref{eq:St:phit},
we have for any $\phi \in \cH^{2\lambda+4,\lambda-1}$, 
\begin{equation*}
\begin{split}
d \bigl\langle S_{t},\phi_{t}\rangle
&= 
- \ell_{t} (\bb + q + \varphi_{t}^{\infty}) 
  \overline \mu_t
   \bigl\langle
 S_{t},
 \phi'(\ell_{t} \cdot)
 \bigr\rangle 
 dt
- \ell_{t} (\bb + q + \varphi_{t}^{\infty}) 
   \bigl\langle
 S_{t},
\textrm{\rm Id} \bigr\rangle 
   \bigl\langle
 \mu_{t},
\phi'(\ell_{t} \cdot) \bigr\rangle dt
 \\ 
&\hspace{15pt} + \frac{\ell_{t}^2}2 \bigl\langle S_{t},
( \sigma^2 
+ \sigma_{0}^2 ) \phi''(\ell_{t} \cdot) \bigr\rangle  dt +
\sigma_{0}
\ell_{t} \bigl\langle S_{t},  \phi'(\ell_{t} \cdot) 
\bigr\rangle d W_{t} + 
{\sigma}
 \ell_{t}\bigl\langle \sqrt{\mu_{t}(\cdot)} \phi'(\ell_{t} \cdot), d\beta_{t}(\cdot) \bigr\rangle,
\end{split}
\end{equation*}
where 
$\phi_{t}(\cdot)
= \phi(\ell_{t} \cdot)$.
For $t \in [0,T]$, let $\widetilde{S}_{t}$ denote the element of ${\mathcal H}^{-(2\lambda+2),\lambda}$ given by 
$\langle \widetilde{S}_{t},\phi \rangle  = \langle S_{t},\phi(\ell_{t} \cdot) \rangle$, 
for $\phi \in {\mathcal H}^{2\lambda+2,\lambda}$. Then,
for $\phi \in {\mathcal H}^{2\lambda+4,\lambda-1}$,
\begin{equation}
\label{eq:ce:quil:faut:verifier}
\begin{split}
d \bigl\langle \widetilde S_{t},\phi \rangle
&= 
- \ell_{t} (\bb + q + \varphi_{t}^{\infty}) 
  \overline \mu_t
   \bigl\langle
\widetilde S_{t},
 \phi' 
 \bigr\rangle 
 dt
 - \ell_{t} (\bb + q + \varphi_{t}^{\infty}) 
   \bigl\langle
 S_{t},
\textrm{\rm Id} \bigr\rangle 
   \bigl\langle
 \mu_{t},
\phi'(\ell_{t} \cdot) \bigr\rangle dt
 \\ 
&\hspace{15pt} + \frac{\ell_{t}^2}2 \bigl\langle \widetilde S_{t},
( \sigma^2 
+ \sigma_{0}^2 ) \phi''  \bigr\rangle  dt +
\sigma_{0}
\ell_{t} \bigl\langle \widetilde S_{t},  \phi' 
\bigr\rangle d W_{t} + {\sigma}
 \ell_{t}\bigl\langle \sqrt{\mu_{t}(\cdot)} \phi'(\ell_{t} \cdot), d\beta_{t}(\cdot) \bigr\rangle
 \\
 &= - \ell_{t} (\bb + q + \varphi_{t}^{\infty}) 
  \overline \mu_t
   \bigl\langle
\widetilde S_{t},
 \phi' 
 \bigr\rangle 
 dt
 -  (\bb + q + \varphi_{t}^{\infty}) 
   \bigl\langle
 S_{t},
\textrm{\rm Id} \bigr\rangle 
   \bigl\langle
 \mu_{t}\bigl( \frac{\cdot}{\ell_{t}} \bigr),
\phi'(\cdot) \bigr\rangle dt
 \\ 
&\hspace{15pt} + \frac{\ell_{t}^2}2 \bigl\langle \widetilde S_{t},
( \sigma^2 
+ \sigma_{0}^2 ) \phi''  \bigr\rangle  dt +
\sigma_{0}
\ell_{t} \bigl\langle \widetilde S_{t},  \phi' 
\bigr\rangle d W_{t} + {\sigma}
 \sqrt{\ell_{t}} \Bigl\langle \sqrt{\mu_{t}(\frac{\cdot}{\ell_{t}})} \phi'( \cdot), d\widetilde \beta_{t}(\cdot) \Bigr\rangle,
\end{split}
\end{equation}
where $\widetilde \beta$ is a cylindrical Brownian motion with values in $L^2(\RR)$, independent of $(\theta_{0},W)$, given by
\begin{equation*}
\begin{split}
\langle \phi(\cdot),d\widetilde \beta_{t}(\cdot)\rangle
= \sqrt{\ell_{t}}\langle \phi(\ell_{t} \cdot),d \beta_{t}(\cdot)\rangle.
\end{split}
\end{equation*}
In particular, the stochastic term $\langle \sqrt{\mu}_{t},d \beta_{t} \rangle 
$ in the representation formula \eqref{eq:St:Id} may be rewritten as
$\langle \sqrt{\mu}_{t},d \beta_{t} \rangle 
= \sqrt{\ell_{t}} \langle 
\sqrt{\mu_{t}(\cdot/\ell_{t})}, d \widetilde{\beta}_{t}(\cdot)
\rangle$.

Letting 
$\psi_{t} = - \int_{0}^t \ell_{s} \bigl( \bar b + q + \varphi_{s}^\infty \bigr) \bar \mu_{s} ds + 
\int_{0}^t \sigma_{0} \ell_{s} dW_{s}$ for $t \in [0,T]$, this prompts us to let, as
a candidate for solving 
\eqref{eq:ce:quil:faut:verifier},
\begin{equation*}
\begin{split}
\widetilde{S}_{t}(x) &=
\bigl\langle 
\theta_{0}, g_{0,t}(\cdot - x)  
\bigr\rangle 
- \int_{0}^t 
\bigl( \bar b + q + \varphi_{s}^\infty
\bigr)
\bigl\langle S_{s},\textrm{\rm Id}
\bigr\rangle
\bigl\langle g_{s,t}'(\cdot -x),
\mu_{s}\bigl(\frac{\cdot}{\ell_{s}}
\bigr)
\bigr\rangle
 ds
 \\
&\hspace{15pt} + \int_{0}^t {\sigma} \sqrt{\ell_{s}} \Bigl\langle g_{s,t}' (\cdot - x) 
\sqrt{\mu_{s}\bigl( \frac{\cdot}{\ell_{s}} \bigr)} , d \widetilde \beta_{s}(\cdot)
\Bigr\rangle,
\end{split}
\end{equation*}
which makes sense in a distributional sense, 
with
\begin{equation*}
g_{s,t}(x) = \frac{1}{\sqrt{\sigma^2\int_{s}^t \ell^2_{r} dr}} \Phi \biggl( \frac{x + \psi_{t} - \psi_{s}}{\sqrt{\sigma^2\int_{s}^t \ell^2_{r} dr}}
\biggr) 
\end{equation*}
where $\Phi$ is the standard Gaussian density.
To make it clear, for $\phi \in {\mathcal H}^{2\lambda+2,\lambda}$, 
\begin{equation}
\label{eq:widetilde:St}
\begin{split}
\bigl\langle \widetilde{S}_{t},\phi \bigr\rangle &=
\bigl\langle 
\theta_{0}, g_{0,t} * \phi
\bigr\rangle 
- \int_{0}^t 
\bigl( \bar b + q + \varphi_{s}^\infty
\bigr)
\bigl\langle S_{s},\textrm{\rm Id}
\bigr\rangle
\bigl\langle g_{s,t} * \phi',
\mu_{s}\bigl(\frac{\cdot}{\ell_{s}}
\bigr)
\bigr\rangle
 ds
\\
&\hspace{15pt} + \int_{0}^t {\sigma} \sqrt{\ell_{s}} \Bigl\langle \bigl( g_{s,t} * \phi' 
 \bigr)(\cdot) 
\sqrt{\mu_{s}\bigl( \frac{\cdot}{\ell_{s}} \bigr)} , d \widetilde \beta_{s}(\cdot)
\Bigr\rangle,
\end{split}
\end{equation}
where $*$ is the usual convolution operator. 
We prove below that this defines a 
square-integrable process with values in 
${\mathcal C}([0,T];{\mathcal H}^{-(2\lambda+2),\lambda})$.
Then, by standard arguments in stochastic calculus, 
$((g_{s,t} * \phi)(x))_{t \geq s}$ is a semi-martingale for any $x \in \RR^d$ and 
any $\phi \in {\mathcal H}^{2,\lambda+1}$ and, with probability 1, 
\begin{equation*}
\begin{split}
d_{t}\bigl( g_{s,t} * \phi\bigr)(x)
&= 
- \ell_{t} (\bb + q + \varphi_{t}^{\infty}) 
  \overline \mu_t
 \bigl( g_{s,t} * \phi'\bigr)(x)
 \\ 
&\hspace{15pt} + \frac{\ell_{t}^2}2 
{\sigma^2 }   \bigl( g_{s,t} * \phi''\bigr)(x)   dt +
\sigma_{0}
\ell_{t} 
 \bigl( g_{s,t} * \phi'\bigr)(x)
 d W_{t}, \quad t \geq s,
 \end{split}
 \end{equation*}
 with $\phi(x)$ as initial condition.
 Following the proof of the It\^o-Wentzell formula (see, for instance, \cite{Kunita.book}), 
 we deduce that \eqref{eq:ce:quil:faut:verifier} holds true for 
 $\phi \in {\mathcal H}^{2 \lambda + 4,\lambda-1}$. By 
 \eqref{eq:St:phit}, this suffices to identify the limit in the central limit theorem.  
 
It then remains to check that 
\eqref{eq:widetilde:St} defines a square-integrable process with values in 
${\mathcal C}([0,T];{\mathcal H}^{2\lambda+2,\lambda})$. 
This is easily checked for the first two terms in the right-hand side, but 
the {verification for the } last term is more involved. 
To prove that it generates a square-integrable process with values in 
${\mathcal C}([0,T];{\mathcal H}^{-(2\lambda+2),\lambda})$, it suffices to prove that 
\begin{equation*}
{\mathbb E}
\biggl[ \sup_{0 \le t \le T} 
\biggl\vert \int_{0}^t \sqrt{\ell_{s}} \Bigl\langle \bigl( g_{s,t} * \phi' 
 \bigr)(\cdot) 
\sqrt{\mu_{s}\bigl( \frac{\cdot}{\ell_{s}} \bigr)} , d \widetilde \beta_{s}(\cdot)
\Bigr\rangle
\biggr\vert^2 \biggr] 
\leq C \| \phi \|_{2,\lambda+1}^2,
\end{equation*}
and then to use the fact the embedding from 
${\mathcal H}^{2+2\lambda,\lambda}$
into
${\mathcal H}^{2,\lambda+1}$ is Hilbert Schmidt.  
To do so, we 
use the \textit{factorization method}
for SPDEs, see 
\cite[Chapter 7]{MR3236753}. Notice indeed that, for $\varepsilon >0$ small enough and for $\phi \in {\mathcal H}^{2,\lambda+1}$,
\begin{equation*}
\begin{split}
&\int_{0}^t \sqrt{\ell_{s}} \Bigl\langle \bigl( g_{s,t} * \phi' 
 \bigr)(\cdot) 
\sqrt{\mu_{s}\bigl( \frac{\cdot}{\ell_{s}} \bigr)} , d \widetilde \beta_{s}(\cdot)
\Bigr\rangle
\\
&= c_{\varepsilon} \int_{0}^t (t-r)^{-1+\varepsilon} 
\biggl[ \int_{\RR}
 g_{r,t}(x)
\biggl( \int_{0}^{r}
\Bigl\langle 
(r-s)^{-\varepsilon}\bigl(g_{s,r} * \phi' \bigr)(\cdot-x) 
\sqrt{\ell_{s}} \sqrt{ \mu_{s}\bigl( \frac{\cdot}{\ell_{s}} \bigr)},  d
 \widetilde \beta_{s}(\cdot) \Bigr\rangle \biggr)  dx \biggr] dr,
\end{split}
\end{equation*}
where $c_{\varepsilon}$ is a normalization constant such that 
$c_{\varepsilon} \int_{s}^t (t- r)^{-1+\varepsilon} (r-s)^{-\varepsilon} d r=1$. 
By H\"older's inequality, we notice that, for $q>1$, the right-hand side is less than
\begin{equation*}
\begin{split}
c_{\varepsilon}
&\biggl( \int_{0}^t 
 \int_{\RR}
(t-r)^{(-1+\varepsilon)q/(q-1)}
\bigl( 1+ \vert x \vert^{\lambda+3}
\bigr)^{q/(q-1)}
 g_{r,t}^{q/(q-1)}(x)
\, dr \, dx \biggr)^{(q-1)/q} \\
&\times \biggl(
 \int_{0}^{T}
\int_{\RR}
\bigl( 1+ \vert x \vert^{\lambda+3}
\bigr)^{-q}
\biggl\vert \int_{0}^r \Bigl\langle 
(r-s)^{-\varepsilon}\bigl(g_{s,r} * \phi' \bigr)(\cdot-x) 
\sqrt{\ell_{s}} \sqrt{ \mu_{s}\bigl( \frac{\cdot}{\ell_{s}} \bigr)},  d
 \widetilde \beta_{s}(\cdot) \Bigr\rangle \biggr\vert^q   dx \biggr)^{1/q} dr.
\end{split} 
 \end{equation*}
Choosing $q$ large enough, we can find a constant $C$ (the value of which is allowed to change from line to line), independent of $t \in [0,T]$, such that 
\begin{equation*}
\begin{split}
\int_{0}^t \int_{\RR} & (t-r)^{(-1+\varepsilon)q/(q-1)}
\bigl( 1+ \vert x \vert^{\lambda+3}
\bigr)^{q/(q-1)}
 g_{r,t}^{q/(q-1)}(x)
 dr dx  
 \\
 &\leq C \bigl( 1 + \sup_{s \in [0,T]} \vert \psi_{s} \vert \bigr)^{(\lambda+3)q/(q-1)} \int_{0}^t (t-r)^{-1+\varepsilon/2}
\\
&\leq C\bigl( 1 + \sup_{s \in [0,T]} \vert \psi_{s} \vert \bigr)^{(\lambda+3)q/(q-1)}.
\end{split}
\end{equation*}
Hence, it suffices to prove that, for any $q \geq 2$,
\begin{equation*}
\sup_{r \in [0,T]}
{\mathbb E} \biggl[ \int_{\RR}
\bigl( 1+ \vert x \vert^{\lambda+3}
\bigr)^{-q}
\biggl\vert \int_{0}^r \Bigl\langle 
(r-s)^{-\varepsilon}\bigl(g_{s,r} * \phi' \bigr)(\cdot-x) 
\sqrt{\ell_{s}} \sqrt{ \mu_{s}\bigl( \frac{\cdot}{\ell_{s}} \bigr)},  d
 \widetilde \beta_{s}(\cdot) \Bigr\rangle \biggr\vert^q   dx \biggr] < \infty. 
\end{equation*}
For $r \in [0,T]$, {the Burkholder-Davis-Gundy inequality and boundedness of $\ell$ yield}
\begin{equation*}
\begin{split}
&{\mathbb E} \biggl[ \int_{\RR}
\bigl( 1+ \vert x \vert^{\lambda+3}
\bigr)^{-q}
\biggl\vert \int_{0}^r \Bigl\langle 
(r-s)^{-\varepsilon}\bigl(g_{s,r} * \phi' \bigr)(\cdot-x) 
\sqrt{\ell_{s}} \sqrt{ \mu_{s}\bigl( \frac{\cdot}{\ell_{s}} \bigr)},  d
 \widetilde \beta_{s}(\cdot) \Bigr\rangle \biggr\vert^q   dx \biggr] 
 \\
 &\leq C 
 {\mathbb E} \biggl[ \int_{\RR}
\bigl( 1+ \vert x \vert^{\lambda+3}
\bigr)^{-q}
\biggl\vert 
\int_{0}^r 
\int_{\R}
(r-s)^{-2 \varepsilon} \bigl\vert 
\bigl(g_{s,r} * \phi' \bigr)(y-x) 
\bigr\vert^2 \mu_{s}\bigl( \frac{y}{\ell_{s}} \bigr) ds \, dy \biggr\vert^{q/2}   dx \biggr] 
 \\
 &\leq C 
 {\mathbb E} \biggl[
 \int_{0}^r 
\int_{\R}
 \int_{\RR}
\bigl( 1+ \vert x \vert^{\lambda+3}
\bigr)^{-q}
(r-s)^{-2 \varepsilon}  \bigl\vert 
\bigl(g_{s,r} * \phi' \bigr)(y-x) 
\bigr\vert^{q} \mu_{s}\bigl( \frac{y}{\ell_{s}} \bigr) ds \, dx \,   dy \biggr]. 
 \end{split}
\end{equation*}
Noticing from 
\eqref{eq:embedding:1}
that 
$\vert (g_{s,r} * \phi')(x) \vert \leq C \| \phi \|_{2,\lambda+1}
( 1+ \vert x \vert + \sup_{t \in [0,T]} \vert \psi_{t} \vert)^{\lambda+1}$, the proof
is easily completed.

\appendix 

\section{On derivatives on Wasserstein space} \label{ap:derivatives}

In this section we state two simple but useful technical results on the derivative $D_m$, and we prove two results from the body of the paper.

\begin{lemma} \label{le:commute}
For any  function $\bU = \bU(x,m) : \R^d \times \P^{\newexp}(\R^d) \rightarrow \R$, we have
\[
D_xD_m\bU(x,m,v) = D_mD_x\bU(x,m,v),  \qquad \mbox{ for all }  v \in \R^d,
\]
as long as the derivatives on both sides exist and are jointly continuous. More precisely, assume that $\bU$ is ${{\mathscr C}^1}$ in the sense of Section \ref{subse:derivatives:m:P2}, that the derivatives $D_x(\delta \bU/\delta m)$, $D_x\bU$, and $\delta(D_x\bU)/\delta m$ exist, and that for every compact set $K \subset \P^{\newexp}(\R^d)$ there exists $c < \infty$ such that $\sup_{m \in K}|D_x\frac{\delta \bU}{\delta m}(x,m,v)| \le c(1 + |x|^\newexp + |v|^\newexp)$ for all $x,v \in \R^d$.
\end{lemma}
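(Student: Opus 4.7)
The plan is to reduce the claim to the commutation of $D_x$ with the flat derivative $\delta/\delta m$, and then invoke Schwarz's theorem on mixed partials.

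First I would unwind the definition of the intrinsic derivative. By \eqref{intrinsic},
\[
D_xD_m\bU(x,m,v) = D_xD_v\left[\frac{\delta \bU}{\delta m}(x,m,\cdot)\right](v),
\]
and
\[
D_mD_x\bU(x,m,v) = D_v\left[\frac{\delta (D_x\bU)}{\delta m}(x,m,\cdot)\right](v).
\]
Since the derivatives $D_x(\delta \bU/\delta m)$ and $D_xD_v(\delta \bU/\delta m) = D_xD_m\bU$ are assumed jointly continuous, Schwarz's theorem gives $D_xD_v(\delta \bU/\delta m) = D_vD_x(\delta \bU/\delta m)$. Hence the lemma reduces to the identity
\begin{equation}
\label{pf:commute-key}
D_x\frac{\delta \bU}{\delta m}(x,m,v) = \frac{\delta(D_x\bU)}{\delta m}(x,m,v).
\end{equation}

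Next I would establish \eqref{pf:commute-key} by differentiating the defining identity \eqref{def:measure-derivative} in the variable $x$. Fix $m,m' \in \P^\newexp(\R^d)$; the identity
\[
\bU(x,m')-\bU(x,m) = \int_0^1\!\!\int_{\R^d}\frac{\delta \bU}{\delta m}((1-t)m + tm',v)(m'-m)(dv)\,dt
\]
should really be stated at $(x,m)$; the dependence on $x$ is suppressed in \eqref{def:measure-derivative} but is immediate from the definition applied to $\bU(x,\cdot)$. The growth bound $|D_x(\delta \bU/\delta m)(x,m,v)| \le c(1+|x|^\newexp+|v|^\newexp)$, uniform on $\W_\newexp$-compacts, provides a $(m'+m)$-integrable dominator, so dominated convergence lets me differentiate under the integral. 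This produces
\[
D_x\bU(x,m')-D_x\bU(x,m) = \int_0^1\!\!\int_{\R^d} D_x\frac{\delta \bU}{\delta m}(x,(1-t)m + tm',v)(m'-m)(dv)\,dt,
\]
which identifies $D_x(\delta \bU/\delta m)(x,m,v)$ as \emph{a} version of $\delta(D_x\bU)/\delta m(x,m,v)$, modulo the usual additive constant in $v$.

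To pin down the constant, I would note that differentiating the normalization \eqref{def:derivative-shift} (applied to $\bU(x,\cdot)$) in $x$ and again appealing to the growth bound to exchange $D_x$ with the $m$-integral yields $\int_{\R^d} D_x(\delta\bU/\delta m)(x,m,v)\,m(dv)=0$, so $D_x(\delta\bU/\delta m)$ satisfies the same normalization as $\delta(D_x\bU)/\delta m$. The two functions therefore coincide pointwise, giving \eqref{pf:commute-key}. Applying $D_v$ to both sides and combining with the Schwarz step above yields the claim. The only nontrivial point is justifying the interchange of $D_x$ and the measure integral, which is handled cleanly by the growth hypothesis as explained.
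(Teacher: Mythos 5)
Your proposal is correct and follows essentially the same route as the paper's proof: differentiate the defining identity \eqref{def:measure-derivative} in $x$ under the integral (justified by the growth bound and dominated convergence) to identify $\delta(D_x\bU)/\delta m$ with $D_x(\delta \bU/\delta m)$, then apply $D_v$ and commute $D_v$ with $D_x$ via Schwarz. Your extra remark about verifying the normalization \eqref{def:derivative-shift} for $D_x(\delta\bU/\delta m)$ is a small refinement the paper leaves implicit, and it is harmless here since the additive constant disappears after applying $D_v$ anyway.
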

\begin{proof}
Fix $x \in \R^d$, $m,m ' \in \P^{\newexp}(\R^d)$, and for $t \in [0,1]$ let $m^t = (1-t)m + tm'$. 
Then by \eqref{def:measure-derivative}, 
\begin{align*}
\bU(x,m') - \bU(x,m) &= \int_0^1\int_{\R^d}\frac{\delta \bU}{\delta m}(x,m^t,v)(m'-m)(dv) dt. 
\end{align*}
Apply $D_x$ to both sides and use dominated convergence to interchange the order of $D_x$ and the integral to get
\begin{align*}
D_x \bU(x,m') - D_x\bU(x,m) &= \int_0^1\int_{\R^d}D_x\frac{\delta \bU}{\delta m}(x,m^t,v)(m'-m)(dv) dt.
\end{align*} 
Another application of \eqref{def:measure-derivative}  shows that for every $v \in \R^d$,
\[
\frac{\delta D_x\bU}{\delta m}(x,m,v) = D_x\frac{\delta \bU}{\delta m}(x,m,v).
\]
To conclude the proof,  apply $D_v$ to both sides, commute $D_v$ and $D_x$ on the right-hand side, and use the definition of $D_m$ from \eqref{intrinsic}. 
\end{proof}

\begin{lemma} \label{le:lineargrowth}
Suppose a function $V : [0,T] \times \R^d \times \P^{\newexp}(\R^d) \rightarrow \R$ is continuous. Suppose also that $D_xV$ and $D_mV$ exist and are continuous and bounded.
Then there exists $C <  \infty$ such that, for all $(t,x,m)$,
\[
|V(t,x,m)| \le C\left(1 + |x| + \W_1(m,\delta_0)\right), 
\]
where $\W_1$ is the Wasserstein distance on ${\mathcal P}(\R^d)$ defined in  Section \ref{subs-notation}. 
\end{lemma}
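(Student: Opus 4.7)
The plan is to split the bound via the triangle inequality along the two natural anchor points $(t,0,m)$ and $(t,0,\delta_0)$, writing
\[
V(t,x,m) = \bigl[V(t,x,m) - V(t,0,m)\bigr] + \bigl[V(t,0,m) - V(t,0,\delta_0)\bigr] + V(t,0,\delta_0),
\]
and to control each term separately. The first difference is bounded by $\|D_xV\|_\infty |x|$ by the fundamental theorem of calculus applied along the segment $[0,x]$. The third term, $V(t,0,\delta_0)$, is a continuous function of $t$ on the compact interval $[0,T]$ and hence uniformly bounded by some constant $C_0$, which contributes to the $1$ in the right-hand side. The only nontrivial step is the middle one.

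For that middle term I would use the interpolation $m^s := (1-s)\delta_0 + s m$ for $s \in [0,1]$ and invoke definition \eqref{def:measure-derivative} to get
\[
V(t,0,m) - V(t,0,\delta_0) = \int_0^1 \int_{\R^d} \frac{\delta V}{\delta m}(t,0,m^s,v)\,(m-\delta_0)(dv)\,ds.
\]
Since $m$ and $\delta_0$ are both probability measures, the integrand is unchanged if we subtract any $v$-independent quantity from $\delta V/\delta m$; in particular, I can replace $\frac{\delta V}{\delta m}(t,0,m^s,v)$ by $\frac{\delta V}{\delta m}(t,0,m^s,v) - \frac{\delta V}{\delta m}(t,0,m^s,0)$. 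Then, by the definition \eqref{intrinsic} of the intrinsic derivative,
\[
\frac{\delta V}{\delta m}(t,0,m^s,v) - \frac{\delta V}{\delta m}(t,0,m^s,0) = \int_0^1 D_m V(t,0,m^s,\lambda v) \cdot v\,d\lambda,
\]
whose absolute value is bounded by $\|D_m V\|_\infty |v|$. Integrating against $m-\delta_0$ (equivalently, against $m$, once the constant is removed), the middle term is then bounded by $\|D_mV\|_\infty \int |v|\,m(dv) = \|D_mV\|_\infty \W_1(m,\delta_0)$, where the last equality uses the definition \eqref{Wassp} (or Kantorovich--Rubinstein duality).

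Combining the three bounds yields
\[
|V(t,x,m)| \le C_0 + \|D_xV\|_\infty |x| + \|D_mV\|_\infty \W_1(m,\delta_0),
\]
which gives the desired inequality with $C := \max\bigl(C_0,\|D_xV\|_\infty,\|D_mV\|_\infty\bigr)$. No step presents a real obstacle; the only point of care is the use of the shift invariance of $\delta V/\delta m$ under integration against $m-\delta_0$ to turn the raw derivative into the bounded intrinsic derivative $D_mV$ via a one-dimensional integration in $v$.
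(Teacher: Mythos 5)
Your proof is correct and follows essentially the same route as the paper's: a first-order expansion in $x$ bounded by $\|D_xV\|_\infty|x|$, the defining identity \eqref{def:measure-derivative} for the measure increment with the Lipschitz property of $v\mapsto \delta V/\delta m$ (inherited from the boundedness of $D_mV$) yielding the $\W_1(m,\delta_0)$ term, and compactness of $[0,T]$ for the constant. The only cosmetic difference is that you interpolate in $x$ and in $m$ separately and compute $\W_1(m,\delta_0)=\int|v|\,m(dv)$ by hand, whereas the paper interpolates jointly along $u\mapsto(ux,(1-u)\delta_0+um)$ and invokes Kantorovich duality directly.
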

\begin{proof}
Begin by writing
\begin{align*}
V&(t,x,m) - V(t,0,\delta_0) \\
	&= \int_0^1\left(x \cdot D_xV(t,ux,(1-u)\delta_0 + um) + \int_{\R^d}\frac{\delta V}{\delta m}(t,ux,(1-u)\delta_0 + um,v)(m-\delta_0)(dv)\right)du.
\end{align*}
Since $|D_mV|$ is bounded, say by a constant $L <  \infty$, the map $v \mapsto \frac{\delta V}{\delta m}(t,x,m,v)$ is $L$-Lipschitz, for each $(t,x,m)$. Hence, Kantorovich duality implies
\begin{align*}
\left|\int_{\R^d}\frac{\delta V}{\delta m}(t,ux,(1-u)\delta_0 + um,v)(m-\delta_0)(dv)\right| \le L \W_1(m,\delta_0).
\end{align*}
If $C < \infty$ satisfies $|D_xV| \le C$ pointwise, then we conclude
\[
|V(t,x,m)| \le |V(t,0,\delta_0)| + C|x| + L\W_1(m,\delta_0).
\]
To complete the proof, note that the continuous function $V(\cdot,0,\delta_0)$ is bounded on $[0,T]$.
\end{proof}

\subsection*{Proof of Proposition \ref{pr:empiricalmeasure}} Let $m \in \P^{\newexp}(\R^d)$ and $\bm{x} = (x_1,\ldots,x_n) \in (\R^d)^n$.
By continuity, it suffices to prove the claims assuming the points $x_1,\ldots,x_n \in \R^d$ are distinct.  Fix an index $j \in \{1,\ldots,n\}$ and a bounded continuous function $\phi : \R^d \rightarrow \R^d$, to be specified later. We claim that, under the assumptions of part (i), 
\begin{align}
\lim_{h\downarrow 0 }\frac{\bU(m \circ (\mathrm{Id} + h\phi)^{-1} )- \bU(m)}{h} = \int_{\R^d}D_m\bU(m,v) \cdot \phi(v)m(dv) \label{pf:Uderivphi}
\end{align}
holds, where $\mathrm{Id}$ denotes the identity map on $\R^d$.
 Once \eqref{pf:Uderivphi} is proven, we complete the proof as follows. For a fixed vector $v \in \R^d$ we may choose $\phi$ such that $\phi(x_j) = v$ while $\phi(x_i)=0$ for $i \neq j$. Let $\bm{v} \in (\R^d)^n$ have $j^\text{th}$ coordinate equal to $v$ and $i^\text{th}$ coordinate zero for $i \neq j$.  
Then $u_n(\bm{x}) = \bU (m_{\bm{x}}^n)$ satisfies
\begin{align*}
\lim_{h \downarrow 0}\frac{u_n(\bm{x} + h\bm{v}) - u_n(\bm{x})}{h} &= \lim_{h\downarrow 0 }\frac{\bU(m^n_{\bm{x}} \circ (\mathrm{Id} + h\phi)^{-1} )- \bU(m^n_{\bm{x}})}{h} \\
	&= \frac{1}{n}\sum_{k=1}^nD_m\bU(m^n_{\bm{x}},x_k) \cdot \phi(x_k) \\
	&= \frac{1}{n}D_m\bU(m^n_{\bm{x}},x_j) \cdot v.
\end{align*}
This proves (i). Under the additional assumptions,  (ii) follows by applying (i) again.

It remains to prove \eqref{pf:Uderivphi}.
For $h > 0$, $t \in [0,1]$, and $m \in \P^{\newexp}(\R^d)$, let $m_{h,t} = t m \circ (\mathrm{Id} + h\phi)^{-1} + (1-t)m$.  Then, using 
\eqref{def:measure-derivative} and \eqref{intrinsic}, respectively, in the first and third equalities below, we obtain 
\begin{align*}
\bU(m \circ (\mathrm{Id} + h\phi)^{-1} - \bU(m) &= \int_0^1\int_{\R^d}\frac{\delta \bU}{\delta m}(m_{h,t},v)\,(m \circ (\mathrm{Id} + h\phi)^{-1} - m)(dv)\,dt \\
	&= \int_0^1\int_{\R^d}\left(\frac{\delta \bU}{\delta m}(m_{h,t},v + h\phi(v)) - \frac{\delta \bU}{\delta m}(m_{h,t},v)\right)m(dv)\,dt \\
	&= h\int_0^1\int_{\R^d} \int_0^1 D_m\bU(m_{h,t},v + sh\phi(v)) \cdot \phi(v)\,ds\,m(dv)\,dt.
\end{align*}
As $h\downarrow 0$ we have $m_{h,t} \rightarrow m$  and $sh \phi(v) \rightarrow 0$,   and we deduce \eqref{pf:Uderivphi} from the bounded convergence theorem and continuity of $D_m$.
\hfill \qed

\subsection*{Proof of Proposition \ref{pr:master-nearly-nash}}
We follow the proof of Proposition 6.3 in \cite{cardaliaguet-delarue-lasry-lions}.  For fixed $n \in \N$ and $x \in \R^d$,  recall that $u^{n,i} (t, \bm{x}) = U (t, x_i, m_{\bm{x}}^n)$, and use Proposition \ref{pr:empiricalmeasure} and Lemma \ref{le:commute}   to derive, for distinct $i, j, k \in \{1, \ldots, n\}$
\begin{align}
D_{x_i}u^{n,i}(t,\bm{x}) &= D_xU(t,x_i,m^n_{\bm{x}}) + \frac{1}{n}D_mU(t,x_i,m^n_{\bm{x}},x_i) \label{pf:deriv1.1} \\
D_{x_j}u^{n,i}(t,\bm{x}) &= \frac{1}{n}D_mU(t,x_i,m^n_{\bm{x}},x_j) \label{pf:deriv1} \\
D^2_{x_i,x_i}u^{n,i}(t,\bm{x}) &= D_x^2U(t,x_i,m^n_{\bm{x}}) + \frac{1}{n}D_xD_mU(t,x_i,m^n_{\bm{x}},x_i) \nonumber \\
	&\quad  + \frac{1}{n}D_vD_mU(t,x_i,m^n_{\bm{x}},x_i) + \frac{1}{n^2}D_m^2U(t,x_i,m^n_{\bm{x}},x_i,x_i) \label{pf:deriv2.1} \\
D^2_{x_i,x_j}u^{n,i}(t,\bm{x}) &= \frac{1}{n}D_xD_mU(t,x_i,m^n_{\bm{x}},x_j) + \frac{1}{n^2}D_m^2U(t,x_i,m^n_{\bm{x}},x_i,x_j) \label{pf:deriv2} \\
D^2_{x_j,x_j}u^{n,i}(t,\bm{x}) &= \frac{1}{n}D_vD_mU(t,x_i,m^n_{\bm{x}},x_j) + \frac{1}{n^2}D^2_mU(t,x_i,m^n_{\bm{x}},x_j,x_j) \label{pf:deriv3} \\
D^2_{x_j,x_k}u^{n,i}(t,\bm{x}) &= \frac{1}{n^2}D^2_mU(t,x_i,m^n_{\bm{x}},x_j,x_k). \label{pf:deriv4}
\end{align}
Use the master equation \eqref{def:masterequation} to find
\begin{align*}
0 &= \partial_tu^{n,i}(t,\bm{x})+ \Phi^{n,i}(t,\bm{x}) + \Psi^{n,i}(t,\bm{x})  + \Xi^{n,i}(t,\bm{x}), 
\end{align*}
where we define
\begin{align*}
\Psi^{n,i}(t,\bm{x}) &= H(x,m^n_{\bm{x}},D_xU(t,x_i,m^n_{\bm{x}})) + \int_{\R^d}\widehat{b}\left(v,m^n_{\bm{x}},D_xU(t,v,m^n_{\bm{x}})\right) \cdot D_mU(t,x_i,m^n_{\bm{x}},v) \,dm^n_{\bm{x}}(v), \\
\Phi^{n,i}(t,\bm{x}) &= \frac{1}{2}\int_{\R^d}\mathrm{Tr}\left[\sigma\sigma^\top D_vD_mU(t,x_i,m^n_{\bm{x}},v)\right]\,dm^n_{\bm{x}}(v) + \frac{1}{2}\mathrm{Tr}\left[\sigma\sigma^\top D_x^2U(t,x_i,m^n_{\bm{x}})\right],
\end{align*}
and
\begin{align*}
\Xi^{n,i}(t,\bm{x}) = &\frac{1}{2}\int_{\R^d}\mathrm{Tr}\left[\sigma_0\sigma_0^\top D_v D_mU(t,x_i,m^n_{\bm{x}},v)\right]\,dm^n_{\bm{x}}(v) \\
	&+\frac{1}{2}\int_{\R^d}\int_{\R^d}\mathrm{Tr}\left[\sigma_0\sigma_0^\top D^2_mU(t,x_i,m^n_{\bm{x}},v,v')\right]\,dm^n_{\bm{x}}(v)\,dm^n_{\bm{x}}(v') \\
&+ \int_{\R^d}\mathrm{Tr}\left[\sigma_0 \sigma_0^\top  D_xD_mU(t,x_i,m^n_{\bm{x}},v)\right]\,dm^n_{\bm{x}}(v)  + \frac{1}{2}\mathrm{Tr}\left[\sigma_0\sigma_0^\top D_x^2U(t,x_i,m^n_{\bm{x}})\right].
\end{align*}
Now use \eqref{pf:deriv1}, \eqref{pf:deriv1.1}  and \eqref{exp-ham} to get
\begin{align*}
\Psi^{n,i}(t,\bm{x}) &= \frac{1}{n}\sum_{j = 1}^n\widehat{b}\left(x_j,m^n_{\bm{x}},D_xU(t,x_j,m^n_{\bm{x}})\right) \cdot D_mU(t,x_i,m^n_{\bm{x}},x_j) \\
	&\quad + H(x,m^n_{\bm{x}},D_xU(t,x_i,m^n_{\bm{x}})) \\
	&= \sum_{j = 1, j \neq i}^n\widehat{b}\left(x_j,m^n_{\bm{x}},D_xU(t,x_j,m^n_{\bm{x}})\right) \cdot D_{x_j}u^{n,i}(t,\bm{x}) \\
	&\quad + \widehat{b}\left(x_i,m^n_{\bm{x}},D_xU(t,x_i,m^n_{\bm{x}})\right) \cdot \left(D_{x_i}u^{n,i}(t,\bm{x}) - D_xU(t,x_i,m^n_{\bm{x}})\right) \\
	&\quad + \widehat{b}\left(x_i,m^n_{\bm{x}},D_xU(t,x_i,m^n_{\bm{x}})\right) \cdot D_xU(t,x_i,m^n_{\bm{x}}) + \widehat{f}(x_i,m^n_{\bm{x}},D_xU(t,x_i,m^n_{\bm{x}})) \\
	&= \sum_{j = 1}^n\widehat{b}\left(x_j,m^n_{\bm{x}},D_xU(t,x_j,m^n_{\bm{x}})\right) \cdot D_{x_j}u^{n,i}(t,\bm{x}) + \widehat{f}(x_i,m^n_{\bm{x}},D_xU(t,x_i,m^n_{\bm{x}})).
\end{align*}

Now, recall from Assumption \ref{assumption:A}(5) that $D_mU$ and $D_xU$ are uniformly bounded. 
Thus,  since $u^{n,j}(t,\bm{x}) = U(t,x_j,m^n_{\bm{x}})$, we use \eqref{pf:deriv1.1} and \eqref{pf:deriv1} to obtain  $\|D_{x_j}u^{n,i}\|_\infty \le C/n$ for $j \neq i$ and $\|D_{x_i}u^{n,i}\|_\infty \le C$.  
Moreover, 
$\widehat{b}(x,m,y)$ and $\widehat{f}(x,m,y)$ are locally Lipschitz in $y$ by respectively 
Assumptions \ref{assumption:A}(1) and either Assumption \ref{assumption:B} or \ref{assumption:B'}(3).   
 Hence,  using \eqref{pf:deriv1.1}, we have 
\begin{align*}
&\left|\Psi^{n,i}(t,\bm{x}) - \sum_{j = 1}^n\widehat{b}\left(x_j,m^n_{\bm{x}},D_{x_j}u^{n,j}(t,\bm{x})\right) \cdot D_{x_j}u^{n,i}(t,\bm{x}) - \widehat{f}\left(x_i,m^n_{\bm{x}},D_{x_i}u^{n,i}(t,\bm{x})\right)\right| \\
	&\le C \sum_{j=1}^n\left|D_{x_j}u^{n,i}(t,\bm{x})\right|\left|D_xU(t,x_j,m^n_{\bm{x}}) - D_{x_j}u^{n,j}(t,\bm{x})\right|  \\
	&\quad\quad + C\left(1+|D_xU(t,x_i,m^n_{\bm{x}})| + |D_{x_i}u^{n,i}(t,\bm{x})|\right)\left|D_xU(t,x_i,m^n_{\bm{x}}) - D_{x_i}u^{n,i}(t,\bm{x})\right| \\
	&\le \frac{C}{n}\sum_{j=1}^n\left|D_{x_j}u^{n,i}(t,\bm{x})\right|\left|D_mU(t,x_j,m^n_{\bm{x}},x_j)\right| + \frac{C}{n}\left|D_mU(t,x_i,m^n_{\bm{x}},x_i)\right| \\
	&\le \frac{C}{n}.
\end{align*}

The argument for the volatility terms is similar, but  somewhat more involved. Expand the integrals and use   \eqref{pf:deriv2.1} and \eqref{pf:deriv3} to get
\begin{align*}
\Phi^{n,i}(t,\bm{x}) 
= &\frac{1}{n}\sum_{j = 1}^n\frac{1}{2}\mathrm{Tr}\left[\sigma\sigma^\top  D_vD_mU(t,x_i,m^n_{\bm{x}},x_j)\right]  + \frac{1}{2}\mathrm{Tr}\left[\sigma\sigma^\top D_x^2U(t,x_i,m^n_{\bm{x}})\right] \\
	= &\frac{1}{2}\mathrm{Tr}\left[\sigma\sigma^\top \left(D_x^2U(t,x_i,m^n_{\bm{x}}) + \frac{1}{n}D_v D_mU(t,x_i,m^n_{\bm{x}}, x_i)\right)\right] \\
	&+ \frac{1}{n}\sum_{j=1,j \neq i}^n \frac{1}{2}\mathrm{Tr}\left[\sigma\sigma^\top  D_v D_mU(t,x_i,m^n_{\bm{x}},x_j)\right] \\
	= &\frac{1}{2}\mathrm{Tr}\left[\sigma\sigma^\top \left(D_{x_i,x_i}^2u^{n,i}(t,\bm{x}) - \frac{1}{n}D_xD_mU(t,x_i,m^n_{\bm{x}},x_i) - \frac{1}{n^2}D_m^2U(t,x_i,m^n_{\bm{x}},x_i,x_i)\right)\right] \\
	&+ \sum_{j=1, j \neq i}^n \frac{1}{2}\mathrm{Tr}\left[\sigma\sigma^\top  \left(D_{x_j,x_j}^2u^{n,i}(t,\bm{x}) - \frac{1}{n^2}D_m^2U(t,x_i,m^n_{\bm{x}},x_j,x_j)\right)\right].
\end{align*}
Hence, using the bounds from Assumption \ref{assumption:A}(5), we obtain  
\begin{align*}
&\left|\Phi^{n,i}(t,\bm{x}) - \sum_{j = 1}^n\frac{1}{2}\mathrm{Tr}\left[\sigma\sigma^\top D_{x_j,x_j}^2u^{n,i}(t,\bm{x})\right]\right| \\
	&\quad\quad\quad\quad\le \frac{1}{2n}\left|\sigma\sigma^\top \left(D_xD_mU(t,x_i,m^n_{\bm{x}},x_i) + \frac{1}{n}D_m^2U(t,x_i,m^n_{\bm{x}},x_i,x_i)\right)\right| \\
	&\quad\quad\quad\quad\quad\quad + \frac{1}{2n^2}\sum_{j=1,j \neq i}^n \left|\sigma\sigma^\top D_m^2U(t,x_i,m^n_{\bm{x}},x_j,x_j)\right| \\
	&\quad\quad\quad\quad \le \frac{C}{n}.
\end{align*}
Finally, we use \eqref{pf:deriv2.1}, \eqref{pf:deriv2}, \eqref{pf:deriv3}, and \eqref{pf:deriv4} to get
\begin{align*}
\Xi^{n,i}(t,\bm{x}) 
= \ &\frac{1}{n}\sum_{j = 1}^n\frac{1}{2}\mathrm{Tr}\left[\sigma_0\sigma_0^\top  D_vD_mU(t,x_i,m^n_{\bm{x}},x_j)\right]  + \frac{1}{n^2}\sum_{j = 1}^n\sum_{k = 1}^n\frac{1}{2}\mathrm{Tr}\left[\sigma_0 \sigma_0^\top D^2_mU(t,x_i,m^n_{\bm{x}},x_j,x_k)\right] \\
	& + \frac{1}{n}\sum_{j =1}^n\mathrm{Tr}\left[\sigma_0 \sigma_0^\top  D_xD_mU(t,x_i,m^n_{\bm{x}},x_j)\right] + \frac{1}{2}\mathrm{Tr}\left[\sigma_0\sigma_0^\top D_x^2U(t,x_i,m^n_{\bm{x}})\right] \\
	= \ &\frac{1}{2}\mathrm{Tr}\left[\sigma_0\sigma_0^\top \left(\frac{1}{n}D_vD_mU(t,x_i,m^n_{\bm{x}},x_i) + \frac{1}{n}D_xD_mU(t,x_i,m^n_{\bm{x}},x_i) \right.\right. \\
	&\quad\quad\quad\quad\quad\quad\quad\quad\quad \left.\left. \vphantom{\frac{1}{n}}+ D_x^2U(t,x_i,m^n_{\bm{x}}) + \frac{1}{n^2}D^2_mU(t,x_i,m^n_{\bm{x}},x_i,x_i)\right)\right] \\
	&+ \frac{1}{2}\sum_{j=1, j \neq i}^n \mathrm{Tr}\left[\sigma_0\sigma_0^\top  \left(\frac{1}{n}D_vD_mU(t,x_i,m^n_{\bm{x}},x_j) + \frac{1}{n^2}D^2_mU(t,x_i,m^n_{\bm{x}},x_j,x_j)\right)\right] \\
	&+ \sum_{j=1, j \neq i}^n \mathrm{Tr}\left[\sigma_0 \sigma_0^\top \left(\frac{1}{n}D_xD_mU(t,x_i,m^n_{\bm{x}},x_j) + \frac{1}{n^2}D^2_mU(t,x_i,m^n_{\bm{x}},x_i,x_j) \right)\right] \\
	&+ \frac{1}{2n^2}\sum_{j=1, j \neq i}^n \sum_{k = 1, k \notin \{i,j\}}^n \mathrm{Tr}\left[\sigma_0 \sigma_0^\top D^2_mU(t,x_i,m^n_{\bm{x}},x_j,x_k)\right],
\end{align*}
and then
\begin{align*}	
\Xi^{n,i}(t,\bm{x}) 
 	= \ &\frac{1}{2}\mathrm{Tr}\left[\sigma_0\sigma_0^\top  D_{x_i,x_i}^2u^{n,i}(t,\bm{x})\right] + \frac{1}{2}\sum_{j \neq i}\mathrm{Tr}\left[\sigma_0\sigma_0^\top D_{x_j,x_j}^2u^{n,i}(t,\bm{x})\right] \\
	&+ \sum_{j \neq i}\mathrm{Tr}\left[\sigma_0 \sigma_0^\top D_{x_i,x_j}^2u^{n,i}(t,\bm{x})\right] + \frac{1}{2}\sum_{j=1,j \neq i}^n \sum_{k \notin \{i,j\}}\mathrm{Tr}\left[\sigma_0 \sigma_0^\top D_{x_j,x_k}^2u^{n,i}(t,\bm{x})\right] \\
	= \ &\frac{1}{2}\sum_{j = 1}^n\sum_{k=1}^n\mathrm{Tr}\left[\sigma_0 \sigma_0^\top D_{x_j,x_k}^2u^{n,i}(t,\bm{x})\right].
\end{align*}
Complete the proof by combining the above results.
\hfill \qed

\bibliographystyle{amsplain}
\bibliography{MFG-CLT-LDP-bib}

\end{document}